\newtheorem{thm}{Theorem}[section]
\newtheorem{remm}{Remark}[section]
\newtheorem{lem}{Lemma}[section]
\newtheorem{remark}[remm]{Remark}
\newtheorem{proposition}[thm]{Proposition}
\numberwithin{equation}{section}
\newcommand{\rset}{{\Bbb R}}
\newcommand{\expected}{{\mathbb E}}
\newcommand{\ken}{\ \ }
\newcommand{\ssy}{\scriptscriptstyle}
\newcommand{\mcA}{{\mathcal A}}
\newcommand{\mcE}{{\mathcal E}}
\newcommand{\mcF}{{\mathcal F}}
\newcommand{\mcO}{{\mathcal O}}
\newcommand{\mcS}{{\mathcal S}}
\newcommand{\half}{\frac{1}{2}}
\newcommand{\brr}{\overline}
\newcommand{\varphib}{\overline{\brr{\varphi}}}
\newcommand{\bara}{\brr{a}}
\newcommand{\barb}{\brr{b}}
\newcommand{\barra}{\overline{\brr{a}}}
\newcommand{\barrb}{\overline{\brr{b}}}
\newcommand{\Ito}{\text{\rm It{\^o}}~}
\newcommand{\tol}{\text{\rm TOL}}
\newcommand{\qhta}{\vartheta}
\newcommand{\tmax}{t_{\text{\rm max}}}
\newcommand{\ggo}{\widetilde{\gamma}_{_0}}
\newcommand{\taumax}{\tau_{\text{\rm max}}}
\newcommand{\barrg}{\overline{\overline{g}}}
\newcommand{\bbg}{\overline{\overline{g}}}
\newcommand{\bbgg}{\overline{\overline{{\widetilde g}}}}
\newcommand{\xwros}{S_{_{\Delta\tau}}}
\newcommand{\prj}{\Pi} 
\newcommand{\ksi}{\xi}
\newcommand{\OM}{\mcO}
\newcommand{\mcG}{\mathcal{G}}
\newcommand{\mcJ}{U}
\newcommand{\tlambda}{{\widetilde{\lambda}}}
\newcommand{\lstar}{\ell_{\star}}
\newcommand{\rhostar}{\rho}
\newcommand{\ErrS}{\mathop{\text{\rm E}_{_S}}}
\newcommand{\zset}{\Bbb Z}
\newcommand{\velos}{\rightarrow}
\newcommand{\kenn}{\hskip0.2truecm}
\newcommand{\emfy}{\hbox{\hbox{$\subset$}\kern-.52em\raise-.26em\hbox{$\rightarrow$}}}
\newcommand{\EES}{\mathop{\text{\tt E}_{\ssy S}}}
\newcommand{\EEtim}{\mathop{\text{\tt E}_{_{\text{\rm tim}}}}}
\newcommand{\EEtau}{\mathop{\text{\tt E}_{_{\text{\rm tau}}}}}
\newcommand{\EEtauS}{\mathop{\text{\tt E}_{_{\text{\rm tau},S}}}}
\newcommand{\EEtimS}{\mathop{\text{\tt E}_{_{\text{\rm tim},S}}}}
\begin{document}  
\title[]
{Monte Carlo Euler approximations\\
of HJM term structure financial models}
\author{T. Bj\"ork$^{+}$
\and A. Szepessy$^{\dag}$
\and R. Tempone$^{\S}$
\and G. E. Zouraris$^{\ddag}$}
%
%
\thanks{$^+$Institutionen f\"or finansiell ekonomi,
Handelsh\"ogskolan, Box 6501, S--113 83 Stockholm (Tomas.Bjork@hhs.se)}
\thanks{$^{\dag}$Matematiska Institutionen,
Kungl. Tekniska H\"ogskolan, S--100 44 Stockholm
(szepessy@kth.se)}
\thanks
{$^{\S}$ Division of Mathematics and Computational Sciences
and Engineering (MCSE),
4700 King Abdullah University of Science and Technology (KAUST),
Thuwal 23955-6900, Kingdom of Saudi Arabia
(raul.tempone@kaust.edu.sa)}
\thanks
{$^{\ddag}$ Department of Mathematics, University of Crete,
GR--714 09 Heraklion (zouraris@math.uoc.gr)}
\subjclass{
Primary 65C05,
        65C30,
        65C20 
Secondary
91B28,
91B70 
}
\keywords {A priori error estimates, a posteriori error estimates,
stochastic differential equations, Monte Carlo methods, HJM model,
option price, bond market}
%
%
%
%
\centerline{\today}
\begin{abstract}
We present  Monte Carlo-Euler methods for a weak
approximation problem related to the Heath-Jarrow-Morton (HJM)
term structure model, based on \Ito stochastic differential
equations in infinite dimensional spaces, and prove strong and weak
error convergence estimates. The weak error estimates
are based on stochastic flows and discrete dual backward problems,
and they can be used to identify different error contributions
arising from time and maturity discretization as well as the
classical statistical error due to finite sampling.
%
%
Explicit formulas for  efficient computation of sharp error
approximation are included. Due to the  structure of the HJM
models  considered here, the computational effort devoted to the
error estimates is low compared to the work to compute Monte Carlo
solutions to the HJM model.  Numerical examples  with known exact
solution are included in order to show the behavior of the
estimates.
\end{abstract}
\maketitle
%
%
\section{The HJM Model}\label{Section1}
\subsection{Generals}
When valuing derivatives in the bond market it is important to use
models that are consistent with the initial term structure
observed in the market. The Heath-Jarrow-Morton (HJM)
model for the forward rate has this property and in addition offers
the freedom to choose the volatility structure, for example to be
able to fit other derivative prices quoted in the market (see
\cite{Car2,Car1,Hu,Reb}). This HJM model approach is particularly
suitable for Monte Carlo computations, since in general the
alternative of tree methods leads, for the multifactor case, to
non recombining trees with higher computational cost.
\par
In this work we focus on the numerical approximation of  the price
of financial instruments in the bond market, using the HJM model
of forward rates.
%
%
%
%
We propose Monte Carlo Euler methods fow which we  develop a
rigorous strong error analysis and provide rigorous weak error
expansions, with leading error term in computable a posteriori form,
offering computational reliability in the use of more complicated HJM
multifactor models, where no explicit formula can be found, or such a
formula is just too complicated to use, for the pricing of contingent claims.
These weak error expansions can be used in adaptive
algorithms to handle {\it simultaneously} different sources of
error, e.g. time discretization, maturity discretization, and
finite sampling, see \cite{STZ}. To develop error estimates we
use a Kolmogorov backward equation in an extended domain and carry
out further the analysis in  \cite{STZ}, from general weak
approximation of \Ito~stochastic differential equations in
$\rset^n$, to weak approximation of the HJM \Ito~stochastic
differential equations in infinite dimensional
spaces. Therefore, the main new ingredient here is to provide
error estimates useful for adaptive refinement not only in time
$t$ but also in maturity time $\tau$.
In addition, using the structure of the HJM model studied here,
the application of a simple transformation  removes the error
caused by the representation of the initial term structure in a
finite maturity partition. Finally, the formulas to compute sharp
error approximations are simplified by exploiting the structure of
the HJM model, reducing the work to compute such error estimates.
The use of the error estimates proposed  here is compatible with
the application of variance reduction techniques, allowing for
faster Monte Carlo computations, see \cite{BoPh}.
\par
The work at hand is based on a research paper included in the one
of the authors PhD Disseration \cite{Tempone}.
\subsection{Description of the model}
The bond market is assumed to be efficient and without friction,
i.e. there is no arbitrage opportunity, and there exists a
martingale probability measure, under which bond contracts can be
priced as expected values of properly discounted cash flows, see
\cite{BR,Bj,Du}. On what follows, all the equations are assumed to
be under such a probability measure.
\par
The HJM model is based on the so called {\it forward rate},
$f(t,\tau)$, which relates to the price of the most simple type of
bond, the zero coupon bond, with contracting time $t$ and maturity
time $\tau$, by
\begin{equation*}
p(t,\tau)=\exp\left(-\int_t^\tau f(t,\eta)\;d\eta\right).
\end{equation*}
In particular, the non arbitrage assumption in the  HJM
formulation, see \cite{HJM1,HJM2}, yields an \Ito~ stochastic
differential equation, for $\tau\in[0,\taumax],$
\begin{equation}\label{eq:1.1}
\begin{split}
df(t,\tau)=&\,\sum_{j=1}^{\ssy J}\sigma^j(t,\tau)
\left(\int_t^{\tau}\sigma^j(t,s)ds\right)\;dt
+\sum_{j=1}^{\ssy J}\sigma^j(t,\tau)\;dW^j(t),\quad t\in[0,\tau]\\
f(0,\tau)=&\,f_{_0}(\tau).
\end{split}
\end{equation}
Here $(W^j)_{j=1}^{\ssy J}$ are independent Wiener processes, and
$(\sigma^j(t,\tau))_{j=1}^{\ssy J}$ are stochastic processes,
adapted to the filter structure generated by the Wiener processes.
Furthermore, the initial datum for the term structure,
$f_{_0}:[0,\taumax]\rightarrow\rset$, is a  deterministic function
in $C^1([0,\taumax])$. In this setting, the {\it short rate},
$r(t)$, is defined as  $r(t)\equiv f(t,t)$.
\par
On what follows the volatility function $\sigma =
(\sigma^1,\dots,\sigma^{\ssy J})$ is assumed to be of the form
\begin{equation*}
\begin{split}
\sigma(t,\tau)=&\,\ksi(r(t))\,\lambda(t,\tau)\\
=&\,\ksi(f(t,t))\,\lambda(t,\tau),
\end{split}
\end{equation*}
where $\ksi:\rset\velos\rset$ and
$\lambda:[0,\tmax]\times[0,\taumax]\rightarrow\rset^{\ssy J}$ are
given bounded functions on $C^{m_0}(\rset)$ and
$C^{m_0}([0,\tmax]\times[0,\taumax])$, respectively, for $m_0$ a
sufficiently large integer. Then, setting
$${\mathcal D}\equiv\{\,(t,\tau)\in[0,\tmax]\times[0,\taumax]:\,
t\le \tau\}$$
problem \eqref{eq:1.1} reads as
follows: find
$f=f(t,\tau):{\mathcal D}\rightarrow{\mathbb R}$
such that
\begin{equation}\label{eq:1.2a}
\begin{split}
df(t,\tau)=&\,\ksi^2(f(t,t))\,\tlambda(t,\tau)\;dt
+\ksi(f(t,t))\,\lambda(t,\tau){\cdot}dW(t),\quad t\in[0,\tau],\\
f(0,\tau)=&\,f_{_0}(\tau)
\end{split}
\end{equation}
for $\tau\in[0,\taumax]$, where
\begin{equation}\label{eq:1.2b}
{\widetilde{\lambda}}(t,\tau) \equiv \lambda(t,\tau)
\cdot\int_t^\tau\lambda(t,z)dz,\quad\forall\,t\in [0,\tau],
\quad\forall\,\tau\in[0,\taumax].
\end{equation}
Here the notation $a\cdot b$ denotes the standard inner product in
$\rset^{\ssy J}$, i.e. $a{\cdot}b\equiv\sum_{j=1}^{\ssy
J}a_j\,b_j$.
In many models used in practice, the function $\lambda$ has the
form $\lambda(t,\tau) = \lambda_{_0}(\tau-t)$, and then
$\tlambda(t,\tau)={\widetilde \lambda}_{_0}(\tau-t)$ with
\begin{equation*}
{\widetilde \lambda}_{_0}(\tau-t)\equiv\lambda_{_0}(\tau-t)\cdot
\int_0^{\tau-t}\lambda_{_0}(z)\;dz.
\end{equation*}
Observe that to solve for $f$ it is enough to have
$\lambda_{_0}:\rset^+\to\rset$.
However, in this work the usual domain of definition ${\mathcal D}$
of $\lambda$ and $f$, extends to the set $[0,\tmax]\times[0,\taumax]$,
leaving $f|_{\ssy {\mathcal D}}$ unchanged.
The extension of ${\mathcal D}$ helps to develop a posteriori
approximations for the time and maturity discretization errors,
depending on a linear backward problem (cf.
Theorem~\ref{thm:3.1}).
%
%
%
%
%
%
\par
A typical contract to price is a call option, with exercise time
$\tmax$ and  strike price $K$, on a zero coupon bond. Its price
can be written in terms of the forward rate as
\begin{equation*}
\expected\left[\,e^{-\int_0^{\tmax}f(s,s)\;ds}\,\,\, \max\left\{
e^{-\int_{\tmax}^{\taumax} f(\tmax,\tau)d\tau}-K,0\right\}\right].
\end{equation*}
Another basic contract  is a continuous cap, with price
\begin{equation*}
\expected\left[\,\int_0^{\tmax}e^{-\int_0^{t}f(s,s)\;ds}\,\,\,
\left(f(t,t)-r_c\right)^{+}\;dt\,\right]
\end{equation*}
where $r_c$ is a given value associated with the contract.
With this motivation, and bearing in mind other possible
contracts, we consider the approximation of the quantity
\begin{equation}\label{eq:1.3}
\expected\left[{\mathcal F}(f)\right]
\end{equation}
where the functional ${\mathcal F}(f)$ is given by
\begin{equation*}
{\mathcal F}(f)\equiv\,F\left(\int_0^{\tmax}f(s,s)\;ds\right)\,
G\left(\int^{\taumax}_{\tau_a}\Psi(f(\tmax,\tau))
\;d\tau\right)
+\int_0^{\tmax}F\left(\int_0^sf(s',s')\;ds'\right)
\,\mcJ(f(s,s))\;ds
\end{equation*}
with $\tau_a$ being a given positive number such that
$0<\tmax\leq\tau_a<\taumax$. Obviously, ${\mathcal F}(f)$ is written
equivalently as
\begin{equation}\label{eq:1.3a}
{\mathcal F}(f)\equiv\,F\left(\,Y(\tmax)\,\right)
\,G\left(\Lambda(\Psi(f(\tmax,\cdot)))\right)
+Z(\tmax),
\end{equation}
 where
\begin{equation}\label{eq:1.3b}
\begin{gathered}
Y(t)\equiv\int_0^{t}f(s,s)\;ds,\quad
Z(t)\equiv\int_0^{t}F(Y(s))\,\mcJ(f(s,s))\;ds,\\
\Lambda(w)\equiv\int^{\taumax}_{\tau_a}
w(\tau)\;d\tau,\quad\forall\,w\in L^1(\tau_a,\taumax).
\end{gathered}
\end{equation}
The functions $F:\rset\rightarrow\rset$,
$G:\rset\rightarrow\rset$, $\Psi:\rset\rightarrow\rset$,
$\mcJ:\rset\rightarrow\rset$, and their derivatives up to a
sufficiently large order $m_{\star}$ are assumed to have a polynomial
growth. We say that a function $S:\rset\to\rset$ has a {\it
polynomial growth} if there exist positive constants $k'$ and $C'$
such that: $|S(x)|\leq C\,'(1+|x|^{k'})$ for all $x\in \rset$.
%
%
%
%
%
\par
Let us consider the system of differential equations \eqref{eq:1.2a}-\eqref{eq:1.2b}
describing the dynamics for the forward rate $f$ along with that for $Y(t)$ and $Z(t)$,  i.e.,
\begin{equation}\label{eq:1.4a}
\begin{split}
df(t,\tau)=&\,\ksi^2(f(t,t))\,\tlambda(t,\tau)\,dt
+\ksi(f(t,t))\,\lambda(t,\tau){\cdot}dW(t),\\
dY(t)=&\,f(t,t)\;dt,\\
dZ(t)=&\,F(Y(t))\,\mcJ(f(t,t))\;dt,\\
\end{split}
\end{equation}
for $t\in[0,\tmax]$ and $\tau\in[0\taumax]$, with the initial conditions
\begin{equation}\label{eq:1.4b}
f(0,\tau)= f_{_0}(\tau),\quad Y(0)= 0,\quad Z(0)=0
\end{equation}
for $\tau\in[0\taumax]$.
\par
A approximation error for a typical discretization of the problem above will consists of a $t-$discretization error and a $\tau-$discretization error
coming from the discretization of the initial condition $f_{_0}$.  Due to the special structure of
\eqref{eq:1.4a}-\eqref{eq:1.4b}, the initial error can be avoided and practically included in the $t-$discretization error
by introducing the anzatz
\begin{equation*}
g(t,\tau)=f(t,\tau)-f_0(\tau),
\end{equation*}
which implies $f(t,t)=g(t,t)+f_0(t)$. Thus,
\eqref{eq:1.4a}-\eqref{eq:1.4b} is formulated as follows:  find
$g=g(t,\tau):[0,\tmax]\times[0,\taumax]\rightarrow{\mathbb R}$
such that
\begin{equation}\label{eq:1.5a}
\begin{split}
dg(t,\tau)=&\,\ksi^2(g(t,t)+f_{_0}(t))\,\tlambda(t,\tau)\;dt
+\ksi(g(t,t)+f_{_0}(t))\,\lambda(t,\tau){\cdot}dW(t),\quad\forall\,t\in[0,\tmax],\\
dY(t)=&\,(g(t,t)+f_{_0}(t))\;dt,\\
dZ(t)=&\,F(Y(t))\,\mcJ(g(t,t)+f_{_0}(t))\;dt,\\
\end{split}
\end{equation}
for $t\in[0,\tmax]$, with  homogeneous initial conditions
\begin{equation}\label{eq:1.5b}
g(0,\tau)= 0,\quad Y(0)= 0,\quad Z(0)= 0
\end{equation}
for all $\tau\in[0,\taumax]$. Thus, the quantity we want to approximate
takes the form
\begin{equation}\label{1:6a}
\expected\left[{\mathcal G}(g)\right]
\end{equation}
where
\begin{equation}\label{1:6b}
{\mathcal G}(g):={\mathcal F}(g+f_{_0}).
\end{equation}
%
%
%
%
%
%
In the numerical methods, we describe later,  the approximations to $Y$ and
$Z$ will be always considered to be respectively the last two components
of the approximate solution vector.
%
%
\subsection{Overview}
Let us give an overiview of the  is organized as follows. In Section \ref{sec:2} first
we present  two  Monte Carlo Euler methods for the HJM model
\eqref{eq:1.5a}-\eqref{eq:1.5b}, namely, a stochastic finite difference
method, the Euler Finite Difference method (EFD), and
a more accurate stochastic finite element method, the Euler
Finite Element method (EFE); then, we combine a numerical quadrature rule
and the outcome of the (EFD) or the (EFE) methods to construct a numerical
approximation of the functional  $\expected\left[{\mathcal G}(g)\right]$.
%
%
%
In Section~\ref{Section_Strong} we provide a stong convergence analysis
for the (EFD) and the (EFE) methods.
Section \ref{sec:3} states and proves weak error estimates
for the (EFD) method, giving explicit formulas for efficient
computation of the discrete duals.
Finally, Section \ref{sec:4} presents results from numerical experiments.
%
%


\section{Monte Carlo Euler Methods}\label{sec:2}
In  this section first we introduce two time and maturity time
discretizations of \eqref{eq:1.5a}-\eqref{eq:1.5b}: the Euler-Finite
Difference (EFD) method and the  Euler-Finite Element (EFE)
method. Then, we use the (EFD) or the (EFE) approximations along with a 
quadrature rule to construct approximations of the quantity of interest
$\expected\left[\mcG(g)\right]$ defined in \eqref{1:6a}.
\subsection{Time and  maturity time discretization}
Given extreme points $0<\tmax\leq\tau_a<\taumax$ introduced in
Section~\ref{Section1}, let $N$ and $L$ denote the number of subintervals
on $[0,\tmax]$ and $[0,\taumax]$, respectively. Then, consider
partitions
\begin{equation*}
0=t_0<\cdots<t_{\ssy N}=\tmax\quad\text{\rm and}\quad
0=\tau_0<\cdots<\tau_{\ssy L}=\taumax
\end{equation*}
of the $t$-interval $[0,\tmax]$ and of the $\tau$-interval
$[0,\taumax]$, respectively. For technical reasons, these
partitions are assumed to satisfy the following condition: every
$\tau$-node in the interval $[0,\tmax]$ is also a $t$-node, i.e.
\begin{equation}\label{eq:A.1}
\text{\rm there exists an one-to-one index
map}\,\rhostar,\,\text{\rm such that},\ken
\tau_{\ell}=t_{\rhostar(\ell)} \ken\text{\rm
for}\ken\tau_\ell\le\tmax.
\end{equation}
In addition, assume that
\begin{equation}\label{eq:A.2}
\text{\rm there exists an index}\ken\lstar
\ken\text{\rm such that}\ken\tmax=\tau_{\lstar}
\end{equation}
and
\begin{equation}\label{eq:A.3}
\text{\rm there exists an index}\ken\ell_a\ken\text{\rm such
that}\ken\tau_a=\tau_{\ell_a}.
\end{equation}
Also, define the auxiliary index function, $\ell_n$, by
\begin{equation}\label{eq:A.4}
\ell_n\equiv \max\left\{\ell\in{\zset}:\quad 0\le \ell \le L\quad
\text{\rm such that}\quad\tau_{\ell}\leq t_n\right\}
\end{equation}
introduce the notation
\begin{equation*}
\Delta t_n \equiv t_{n+1}-t_n,\ken\Delta W_n\equiv
W(t_{n+1})-W(t_n)\quad\text{\rm for}\quad n=0,\dots,N-1,
\end{equation*}
\begin{equation*}
\Delta\tau_\ell\equiv\tau_{\ell+1}-\tau_\ell\quad\text{\rm
for}\quad \ell=0,\dots,L-1,
\end{equation*}
and set $\Delta{t}\equiv\max_{0\leq{n}\leq{\ssy N-1}}\Delta t_n$
and $\Delta\tau\equiv\max_{0\leq{\ell}\leq{\ssy
L-1}}\Delta\tau_{\ell}$.
Finally, introduce the  space of piecewise constant and right
continuous functions on a $\tau$-partition,
$(\tau_{\ell})_{\ell=0}^{\ssy L}$, of the interval $[0,\taumax]$,
by
\begin{equation*}
\xwros\equiv\left\{\chi\in L^{\infty}(0,\taumax): \text{\rm there
are constants}\ken(c_{\ell})_{\ell=0}^{\ssy L-1} \ken\text{\rm
such that}\ken \chi|_{[\tau_{\ell},\tau_{\ell+1})}=c_{\ell},
\quad\ell=0,\dots,L-1\right\}.
\end{equation*}
Define the standard $L^2$--projection
$\prj:L^2(0,\taumax)\velos\xwros$ by
\begin{equation*}
\int_0^{\taumax}\prj{v}\,\chi\;d\tau
=\int_0^{\taumax}v\,\chi\;d\tau, \quad\forall\,\chi\in\xwros,
\quad\forall\,v\in L^2(0,\taumax),
\end{equation*}
which satisfies
\begin{equation*}
\prj{v}\left|_{[\tau_{\ell},\tau_{\ell+1})}\right.
=\tfrac{1}{\Delta\tau_\ell}\int_{\tau_\ell}^{\tau_{\ell+1}}
v(\tau) \ken d\tau,\quad \ell=0,\dots,L-1, \quad\forall\,v\in
L^2(0,\taumax).
\end{equation*}
For $\chi\in\xwros$ and $\ell=0,\dots,L-1$, denote by
$\chi_{\ell}$ the constant value of $\chi$ in
$[\tau_\ell,\tau_{\ell+1})$. When considering a function, $w =
w(t,\tau)$, depending on two variables, the $L^2$ projection is
always with respect to $\tau$, i.e. for $\ell=0,\dots,L-1$ and
$\tau \in [\tau_\ell,\tau_{\ell+1})$, we have
$\prj{w}(t;\tau)\equiv\prj(w(t,\cdot))\left|_{[\tau_{\ell},\tau_{\ell+1})}\right.
=\tfrac{1}{\Delta\tau_\ell}\int_{\tau_\ell}^{\tau_{\ell+1}}w(t,s)
\;ds$.
\subsection{The Euler-Finite Difference (EFD) method}
For each time level the (EFD) method approximates $g(t_n,.)$ by a
piecewise constant function, $\barrg(t_n,.)\in \xwros$. In
particular, it finds the approximate values
$\barrg_{n,\ell}\approx g(t_n,\tau_\ell)$ for $\ell=0,\dots,L-1$,
$\barrg_{n,{\ssy L}}\approx Y(t_n)$, $\barrg_{n,{\ssy L+1}}\approx
Z(t_n)$ by setting first
\begin{equation}\label{eq:2.1a}
\barrg_{0,\ell}\equiv 0,\quad \ell=0,\dots,L+1,
\end{equation}
and, then recursively, 
for $n=0,\dots,N-1$, define
\begin{equation}\label{eq:2.1b}
\begin{split}
\barrg_{n+1,\ell}=&\,\barrg_{n,\ell}+\Delta
t_n\,\ksi^2(\barrg_{n,\ell_n}+f_{_0}(t_n))
\,\tlambda(t_n,\tau_{\ell})\\
&\hskip0.8truecm +\ksi\left(\barrg_{n,\ell_n}+f_{_0}(t_n)\right)
\,\lambda(t_n,\tau_{\ell}){\cdot}\Delta W_n,
\quad\ell=0,\dots,L-1,\\
\barrg_{n+1,{\ssy L}}=&\,\barrg_{n,{\ssy L}}
+\Delta t_n\,\left(\barrg_{n,\ell_n}+f_{_0}(t_n)\right),\\
\barrg_{n+1,{\ssy L+1}}=&\,\barrg_{n,{\ssy L+1}}+\Delta
t_n\,\,\,F(\barrg_{n,{\ssy L}})
\,\,\,U\left(\barrg_{n,\ell_n}+f_{_0}(t_n)\right)\\
\end{split}
\end{equation}
where
%
%
the index $\ell_n$ has been defined in \eqref{eq:A.4}.
%
%
%
\subsection{ The Euler-Finite Element (EFE) method}
The (EFE) method also approximates the $\tau$-function
$g(t_n,\cdot)$, by a piecewise constant function
$\barrg(t_n,\cdot)\in\xwros$, but is based in a variational
formulation of \eqref{eq:1.5a}-\eqref{eq:1.5b} with $\xwros$ being
the space of trial and test functions.
In particular, the (EFE) is defined by the initial datum
\begin{equation}\label{eq:2.2a}
\barrg_{0,\ell}\equiv  0,\quad \ell=0,\dots,L+1,
\end{equation}
and, for $n=0,\dots,N-1$, the recursion
\begin{equation}\label{eq:2.2b}
\begin{split}
\barrg_{n+1,\ell}=&\,\barrg_{n,\ell}+{\Delta
t}_n\,\ksi^2\left(\barrg_{n,\ell_n}+f_{_0}(t_n)\right)
\,\prj\tlambda(t_n;\tau_\ell)\\
&\hskip0.8truecm
+\ksi\left(\barrg_{n,\ell_n}+f_{_0}(t_n)\right)\,\prj\lambda(t_n;\tau_\ell)
{\cdot}\Delta W_n,\quad\ell=0,\dots,L-1,\\
\barrg_{n+1,{\ssy L}}=&\,\barrg_{n,{\ssy L}}+\Delta t_n\,\left(\barrg_{n,\ell_n}+f_{_0}(t_n)\right),\\
\barrg_{n+1,{\ssy L+1}}=&\,\barrg_{n,{\ssy L+1}}
+\Delta t_n\,F(\barrg_{n,{\ssy L}})\,U\left(\barrg_{n,\ell_n}+f_{_0}(t_n)\right)\\
\end{split}
\end{equation}
where the index $\ell_n$ has been defined in \eqref{eq:A.4}.
%
%
%
%
%
\subsection{Approximation of the quantity of interest $\expected[\mcG(g)]$}
The numerical approximation of $\mcG(g)$ defined in \eqref{1:6a}
involves both an approximation of the processes $g$, $Y$, $Z$, by
computable quantities, and an approximation of the $\tau$-integral
in \eqref{eq:1.3b}.
\par
To construct an approximation of $\Lambda(\Psi(g(\tmax,\cdot)+f_{_0}(\cdot)))$ we
apply a composite quadrature formula, over the partition of
$[0,\taumax]$, based on a quadrature rule
$Q:C[0,1]\rightarrow{\mathbb R}$ with $N_{\ssy Q}$ nodes
$s_{\ssy Q}=(s_{{\ssy Q},i})_{i=1}^{\ssy N_{Q}}$ and weights
$w_{\ssy Q}=(w_{{\ssy Q},i})_{i=1}^{\ssy N_{Q}}$,
i.e., for $v\in C([0,1];{\mathbb R})$ the quantity $Q(v)=\sum_{i=1}^{\ssy
N_{Q}}w_{{\ssy Q},i}\,v(s_{{\ssy Q},i})$ approximates the integral $\int_0^1v(x)\;dx$.
Also, we assume that the quadrature rule $Q$ is of order $p_{\ssy
Q}$, i.e., it is exact for polynomials of order less or equal to
$p_{\ssy Q}-1$.
For example, the Simpson rule has $N_{\ssy Q}=3$,
$s_{\ssy Q}=(0,\half,1)$ and $w_{\ssy Q}=(\frac{1}{6},\frac{2}{3},\frac{1}{6})$,
with $p_{\ssy Q}=4$. Another example is the Gaussian quadrature with
$N_{\ssy Q}=2$, 
$s_{\ssy Q}=(\half-\frac{1}{2\sqrt{3}},\half+\frac{1}{2\sqrt{3}})$,
$w_{\ssy Q}=(\half,\half)$ and $p_{\ssy Q}=4$. We note that it is well
known from the mathematical analysis of numerical quadrature that in general we have
$p_{\ssy Q}\leq\,2\,N_{\ssy Q}$, and the maximum value $p_{\ssy Q}=2N_{\ssy Q}$
is achieved only by the Gaussian quadrature.
\par
Thus, for a fixed realization of $\barrg$ obtained by the (EFD) or the (EFE) method, first
we approximate $\Lambda_{\ssy\Psi}(g):=\Lambda(\Psi(g(\tmax,\cdot)+f_{_0}(\cdot)))$ by
$\Lambda_{\ssy\Psi}(\bbg)=\Lambda(\Psi(\barrg(\tmax,\cdot)+f_{_0}(\cdot)))$ and then we apply the composite
quadrature formula to construct an approximation 
${\overline\Lambda}_{\ssy\Psi,Q}(\bbg)$ of $\Lambda_{\ssy\Psi}(\bbg)$ as follows
\begin{equation}\label{eq:2.3a}
\begin{split}
{\overline\Lambda}_{\ssy\Psi,Q}(\bbg)=&\,\sum_{\ell=\ell_a}^{\ssy
L-1}\Delta\tau_{\ell}\,Q\left(\Psi\left(\,
\barrg(\tmax,\tau_{\ell}+\cdot\,\Delta\tau_{\ell})
+f_{_0}(\tau_{\ell}+\cdot\,\Delta\tau_{\ell})\,\right)\right)\\
=&\sum_{\ell=\ell_a}^{\ssy L-1}\Delta\tau_{\ell}\,\left[
\,\sum_{i=1}^{\ssy N_{\ssy Q}}w_{{\ssy Q},i}\,\Psi\left(\barrg_{{\ssy
N},\ell} +f_{_0}(\tau_{\ell}+s_{{\ssy Q},i}\,\Delta\tau_{\ell})\right)
\,\right].\\
\end{split}
\end{equation}
%
%
%
%
%
%
Note that $\barrg(\tmax,\cdot)$ is piecewise constant over the partition
of $[0,\taumax]$  and numerical quadrature error in \eqref{eq:2.3a}
is caused only from the presence of the initial datum $f_{_0}$.
In particular, if the initial datum for the term structure, $f_{_0}$, is a
piecewise constant function on the maturity time partition, then there
is no quadrature error.
%
%
Finally, an approximation ${\brr{\mcG}}(\barrg)$ of
${\mcG}(g)$ is computed by
\begin{equation}\label{eq:2.4a}
{\brr{\mcG}}(\barrg)\equiv F\bigl(\barrg_{{\ssy N},{\ssy
L}}\bigr)\,G\left({\overline\Lambda}_{\ssy \Psi,Q}(\bbg)\right)+\barrg_{{\ssy
N},{\ssy L+1}}.
\end{equation}
\par
The Monte Carlo method, \cite{MonteC}, approximates the
expectation of a given random variable $X$ by a sample average of
$M$ independent realizations of $X$, i.e. $\expected[X]\approx
\mcA(M;X)\equiv\tfrac{1}{M}\sum_{j=1}^{\ssy M} X(\omega_j)$.
In particular, here we approximate $\expected[{\mathcal G}(g)]$ by a
sample average of ${\brr{\mcG}}(\barrg)$,
\begin{equation}\label{eq:2.4b}
\mcA\left(M;{\brr{\mcG}}(\barrg)\right)\equiv
\tfrac{1}{M}\sum_{j=1}^{\ssy M} \left[\,F\left(\barrg_{{\ssy
N},{\ssy L}}(\omega_j)\right)\,
G\left(\,{\overline\Lambda}_{\ssy\Psi,Q}(\bbg(\omega_j))\,\right)
+\barrg_{{\ssy N},{\ssy L+1}}(\omega_j)\,\right].
\end{equation}
\par
Therefore, the exact computational weak error
\begin{equation}\label{Comp_Error}
{\mcE}_c\equiv \expected[{\mathcal
G}(g)]-\mcA\left(M;{\brr{\mcG}}(\barrg)\right)
\end{equation}
naturally separates into three error contributions as
follows:
\begin{equation}\label{eq:2.5}
{\mcE}_c=E_{\ssy D} +E_{\ssy Q} + E_{\ssy S}
\end{equation}
%
%
with
\begin{equation}\label{eq:2.5a}
\begin{gathered}
E_{\ssy D}\equiv\,\expected\left[\,\mcG(g)\,\right]
-\expected\left[\,{\mcG}(\barrg)\,\right],\quad
E_{\ssy Q} \equiv\,\expected\left[\,\mcG(\barrg)\,\right]
-\expected\left[\,{\overline{\mcG}}(\barrg)\,\right],\\
E_{\ssy
S}\equiv\,\expected\left[\,{\overline{\mcG}}(\barrg)\,\right]-{\mcA}
\left(M;{\overline{\mcG}}(\barrg)\right)\\
\end{gathered}
\end{equation}
where $E_{\ssy D}$ is the error contribution from $t$- and $\tau$-
discretization, $E_{\ssy Q}$ is the quadrature error in
\eqref{eq:2.3a}, and $E_{\ssy S}$ is the statistical error.
%
%
%
%
%
\section{Strong Convergence}\label{Section_Strong}
To carry out an error analysis for the numerical methods proposed
in Section~\ref{sec:2}, we assume that there exists nonnegative
constants $C_{\xi,1}$ and $C_{\xi,2}$ such that
\begin{equation}\label{KSI_1}
|\xi^2(x)|\leq\,C_{\xi,1}\,(1+|x|)\quad\forall\,x\in{\mathbb R},
\end{equation}
and
\begin{equation}\label{KSI_2}
|\ksi^2(x)-\ksi^2(z)|+|\ksi(x)-\ksi(z)|\leq\,C_{\xi,2}\,|x-z|,
\quad\forall\,x,\,z\in{\mathbb R}.
\end{equation}
\subsection{Bounds for moments}
In Lemmas~\ref{AUXLM_2} and \ref{AUXLM_14}, we show, respectively,
boundness for the moments of the $\tau-$derivatives of the
solution $g$ to the problem \eqref{eq:1.5a}--\eqref{eq:1.5b}, and
for the functional value ${\mathcal G}(g)$.
\begin{lem}\label{AUXLM_2}
Let $D_{\star}\equiv[0,t_{\max}]\times[0,\tau_{\max}]$, $g$ be the
solution of \eqref{eq:1.5a}--\eqref{eq:1.5b} and $\nu\in{\mathbb
N}_0$. Also, we assume that the derivatives
$\partial_{\tau}^{\ell}{\widetilde\lambda}$ and
$(\partial_{\tau}^{\ell}\lambda_j)_{j=1}^{\ssy J}$ are well
defined and continuous on $D_{\star}$, for $\ell=0,\dots,\nu$.
Then, for $\ell=0,\dots,\nu$ and $\kappa\in{\mathbb N}$, there
exists a positive constant $C^{\ssy M}_{\kappa,\ell}$, depending on
$\kappa$, $\ell$, $(\partial_{\tau}^{\ell}\lambda_j)_{j=1}^{\ssy
J}$, $\partial_{\tau}^{\ell}{\widetilde\lambda}$, $f_{_0}$,
$C_{\xi,1}$, $\tau_{\max}$ and $t_{\max}$, such that
\begin{equation}\label{KSI_MaximusM_1}
\max_{\ssy (t,\tau)\in D_{\star}}\expected\left[\,
\left|\partial_{\tau}^{\ell}g(t,\tau)\right|^{2\kappa}
\,\right]\leq\,C^{\ssy M}_{\kappa,\ell},
\end{equation}
where $C_{\xi,1}$ is the constant in \eqref{KSI_1}.
\end{lem}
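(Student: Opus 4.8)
The natural strategy is induction on the derivative order $\ell$, combined at each level with standard moment estimates for linear It\^o SDEs (Burkholder--Davis--Gundy together with Gronwall). First I would fix $\tau\in[0,\tau_{\max}]$ and differentiate the equation \eqref{eq:1.5a} formally $\ell$ times with respect to $\tau$. Since $\xi^2(g(t,t)+f_{_0}(t))$ and $\xi(g(t,t)+f_{_0}(t))$ do \emph{not} depend on $\tau$, the $\tau$-differentiation falls entirely on the deterministic coefficients $\tlambda(t,\tau)$ and $\lambda(t,\tau)$ (and $f_{_0}$ does not appear because $g(0,\tau)\equiv 0$). Hence $\partial_\tau^\ell g(t,\tau)$ satisfies
\begin{equation*}
d\bigl(\partial_\tau^\ell g(t,\tau)\bigr)
=\xi^2(g(t,t)+f_{_0}(t))\,\partial_\tau^\ell\tlambda(t,\tau)\,dt
+\xi(g(t,t)+f_{_0}(t))\,\partial_\tau^\ell\lambda(t,\tau){\cdot}dW(t),
\qquad \partial_\tau^\ell g(0,\tau)=0,
\end{equation*}
so in fact the estimates for different $\ell$ decouple and no genuine induction is even required: the case $\ell=0$ and the case of general $\ell$ are proved by the same argument, only with $\tlambda,\lambda$ replaced by $\partial_\tau^\ell\tlambda,\partial_\tau^\ell\lambda$.

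The core estimate would then proceed as follows. Writing $\partial_\tau^\ell g(t,\tau)$ in integral form and raising to the $2\kappa$-th power, I would apply the elementary inequality $|a+b|^{2\kappa}\le 2^{2\kappa-1}(|a|^{2\kappa}+|b|^{2\kappa})$, then H\"older's inequality in time on the drift term and the Burkholder--Davis--Gundy inequality on the stochastic integral term. This bounds $\expected[\,|\partial_\tau^\ell g(t,\tau)|^{2\kappa}\,]$ by a constant (depending on $t_{\max}$, $\kappa$, and the $L^\infty$ norms of $\partial_\tau^\ell\tlambda$, $\partial_\tau^\ell\lambda$ on $D_\star$, all finite by the continuity hypothesis on a compact set) times $\int_0^t \expected\bigl[\,|\xi^2(g(s,s)+f_{_0}(s))|^{2\kappa}+|\xi(g(s,s)+f_{_0}(s))|^{2\kappa}\,\bigr]\,ds$. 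Using the linear growth bound \eqref{KSI_1} and boundedness of $f_{_0}\in C^1([0,\tau_{\max}])$, the integrand is controlled by $C\,(1+\expected[\,|g(s,s)|^{\kappa}\,])$, and then by Jensen/H\"older by $C\,(1+\expected[\,|g(s,s)|^{2\kappa}\,])$.

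What remains is the $\ell=0$ estimate evaluated on the diagonal, i.e.\ a bound on $\sup_{s\le t}\expected[\,|g(s,s)|^{2\kappa}\,]$; this is the one place where the coefficients depend on the unknown and a Gronwall argument is genuinely needed. Setting $\tau=t$ (using the extension of the domain to all of $[0,t_{\max}]\times[0,\tau_{\max}]$ precisely so that $g(t,t)$ makes sense), the same BDG/H\"older manipulation gives $\expected[\,|g(t,t)|^{2\kappa}\,]\le C\int_0^t\bigl(1+\expected[\,|g(s,s)|^{2\kappa}\,]\bigr)\,ds$, whence Gronwall's lemma yields $\sup_{t\le t_{\max}}\expected[\,|g(t,t)|^{2\kappa}\,]\le C_\kappa<\infty$. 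Feeding this back into the $\ell$-level estimate above and taking the supremum over $(t,\tau)\in D_\star$ produces the claimed constant $C^{\ssy M}_{\kappa,\ell}$ with the stated dependencies. The only mild technical obstacle I anticipate is justifying rigorously that $\partial_\tau^\ell g$ solves the differentiated SDE (interchanging $\partial_\tau$ with the It\^o integral); this is routine given the smoothness of $\lambda,\tlambda$ and the moment bounds just derived — one approximates by difference quotients in $\tau$ and passes to the limit using the same BDG estimates — but it should be stated carefully. A secondary, purely bookkeeping point is to handle non-integer... — no: $\kappa\in\mathbb N$, so the exponents are all even integers and no extra care with fractional powers is needed.
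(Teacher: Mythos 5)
Your proposal is correct and follows essentially the same route as the paper: write $\partial_\tau^\ell g$ in integral form (the $\tau$-derivatives fall only on ${\widetilde\lambda}$ and $\lambda$ since $\xi$ is evaluated on the diagonal), estimate the $2\kappa$-th moments of the drift and martingale parts via H\"older and a BDG-type moment inequality together with \eqref{KSI_1}, and close with Gronwall applied on the diagonal $\tau=t$ before feeding the diagonal bound back into the off-diagonal estimate. The only cosmetic difference is that the paper's It\^o moment bound leaves a term $\bigl(\int_0^t\expected[\,|g(s,s)|^2\,]\,ds\bigr)^{\kappa}$ and therefore runs Gronwall twice (first for $\kappa=1$, then for general $\kappa$), whereas your Jensen step closes everything in a single Gronwall pass.
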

%
%
%
%
%
%
%
%
%
%
%
%
%
\begin{proof}
Let $\kappa\in{\mathbb N}$, $\ell\in\{0,\dots,\nu\}$ and
$(t,\tau)\in D_{\star}$. Also, in order to simplify the notation,
we set $t_{\star}:=\tmax$ and $\tau_{\star}:=\taumax$.
Our first step is to use \eqref{eq:1.5a} to get
\begin{equation}\label{KSI_M3}
\expected\left[\,\left|\partial_{\tau}^{\ell}g(t,\tau)\right|^{2\kappa}\,\right]
\leq\,(J+1)^{2\kappa-1}\,\left[\,T_{1,\kappa}^{\ell}(t,\tau)
+T^{\ell}_{2,\kappa}(t,\tau)\,\right],
\end{equation}
where
\begin{equation*}
\begin{split}
T_{1,\kappa}^{\ell}(t,\tau)\equiv&\,\expected\left[\,\left(\int_0^t
\partial_{\tau}^{\ell}{\widetilde\lambda}(s,\tau)
\,\xi^2(g(s,s)+f_{_0}(s))\;ds\right)^{2\kappa}\right],\\
T_{2,\kappa}^{\ell}(t,\tau)\equiv&\,\sum_{j=1}^{\ssy
J}\,\expected\left[\,\left(\int_0^t\partial_{\tau}^{\ell}\lambda_j(s,\tau)
\,\ksi(g(s,s)+f_{_0}(s))\;dW^j(s)\right)^{2\kappa}\right].\\
\end{split}
\end{equation*}
Using \eqref{KSI_1} and applying the H{\"o}lder inequality we have
\begin{equation}\label{KSI_M4}
\begin{split}
T_{1,\kappa}^{\ell}(t,\tau)\leq&\,(C_{\xi,1})^{2\kappa}\,
\expected\left[\,\left(\int_0^t|\partial_{\tau}^{\ell}{\widetilde\lambda}(s,\tau)|
\,\left(\,1+|f_{_0}(s)|+|g(s,s)|\,\right)\;ds\right)^{2\kappa}\right]\\
\leq&\,2^{2\kappa-1}\,(C_{\xi,1})^{2\kappa}\,
\expected\Bigg[\,\left(\int_0^t|\partial_{\tau}^{\ell}{\widetilde\lambda}(s,\tau)|
\,(1+|f_{_0}(s)|)
\,ds\right)^{2\kappa}\\
&\hskip3.5truecm
+\left(\int_0^t|\partial_{\tau}^{\ell}{\widetilde\lambda}(s,\tau)|\,
\,|g(s,s)|\;ds\right)^{2\kappa}\Bigg]\\
\leq&\,C_{1}^{\kappa,\ell}+C_{2}^{\kappa,\ell}\,\int_0^t\expected
\left[\,(g(s,s))^{2\kappa}\,\right]\;ds,\\
\end{split}
\end{equation}
where $C_{1}^{\kappa,\ell}\equiv
2^{2\kappa-1}\,(C_{\xi,1})^{2\kappa}\,
\max\limits_{\tau\in[0,\tau_{\star}]}\left(\int_0^{t_{\star}}
|\partial_{\tau}^{\ell}{\widetilde\lambda}(s,\tau)|
\,(1+|f_{_0}(s)|)\;ds\right)^{2\kappa}$ and
\begin{equation*}
C_{2}^{\kappa,\ell}\equiv 2^{2\kappa-1}\,(C_{\xi,1})^{2\kappa}
\,\max\limits_{\tau\in[0,\tau_{\star}]}\left(\int\nolimits_0^{t_{\star}}|\partial_{\tau}^{\ell}
{\widetilde\lambda}(s,\tau)|^{\frac{2\kappa}{2\kappa-1}}\;ds\right)^{2\kappa-1}.
\end{equation*}
Next, using the properties of the It$\hat{\text{\rm o}}$ integral
and \eqref{KSI_1}, we obtain
\begin{equation}\label{KSI_M5}
\begin{split}
T_{2,\kappa}^{\ell}(t,\tau)\leq&\,(2\kappa-1)!!\,\sum_{j=1}^{\ssy
J}\left(\int_0^t(\partial_{\tau}^{\ell}\lambda_j(s,\tau))^2
\,\expected\left[\,\ksi^2(g(s,s)+f_{_0}(s))\,\right]\;ds\right)^{\kappa}\\
\leq&\,(2\kappa-1)!!\,(C_{\xi,1})^{\kappa}\,\sum_{j=1}^{\ssy
J}\left(\int_0^t(\partial_{\tau}^{\ell}\lambda_j(s,\tau))^2
\,\left(1+|f_{_0}(s)|+\expected\left[\,|g(s,s)|\,\right]
\,\right)\;ds\right)^{\kappa}\\
\leq&\,(2\kappa-1)!!\,(C_{\xi,1})^{\kappa}\,\sum_{j=1}^{\ssy
J}\left(\int_0^t(\partial_{\tau}^{\ell}\lambda_j(s,\tau))^2
\,\left(2+|f_{_0}(s)|+\expected\left[\,|g(s,s)|^2\,\right]
\right)\;ds\right)^{\kappa}\\
\leq&\,C_{4}^{\kappa,\ell}+C_{3}^{\kappa,\ell}\,
\left(\int_0^t\expected\left[\,|g(s,s)|^2
\,\right]\;ds\right)^{\kappa},\\
\end{split}
\end{equation}
where
$C_{3}^{\kappa,\ell}\equiv\,(2\kappa-1)!!\,2^{\kappa-1}
\,(C_{\xi,1})^{\kappa}\,\left(\sum_{j=1}^{\ssy
J}\max\limits_{\ssy
D_{\star}}|\partial_{\ssy\tau}^{\ell}\lambda_j|^{2\kappa}\right)$
and
$C_{4}^{\kappa,\ell}\equiv\,C_{3}^{\kappa,\ell}\,\left(\int_0^{\tmax}
\,\left(2+|f_{_0}(s)|\right)\;ds\right)^{\kappa}$.
Now, combine \eqref{KSI_M3}, \eqref{KSI_M4} and \eqref{KSI_M5}, to
arrive at
\begin{equation}\label{KSI_M6}
\expected\left[\,\left(\partial_{\tau}^{\ell}g(t,\tau)\right)^{2\kappa}
\,\right]\leq\,C_{\ssy I}^{\kappa,\ell}+C_{\ssy
I\!I}^{\kappa,\ell}\left(\int_0^t\expected
\left[\,|g(s,s)|^{2}\,\right]\;ds\right)^{\kappa}
+C_{\ssy I\!I\!I}^{\kappa,\ell}\int_0^t\expected
\left[\,|g(s,s)|^{2\kappa}\,\right]\;ds,\\
\end{equation}
where $C_{\ssy I}^{\kappa,\ell}=(J+1)^{2\kappa-1}
\,(\,C_{1}^{\kappa,\ell}+C_{4}^{\kappa,\ell}\,)$, $C_{\ssy
I\!I}^{\kappa\,\ell}=(J+1)^{2\kappa-1}\,C_{2}^{\kappa,\ell}$ and
$C_{\ssy
I\!I\!I}^{\kappa,\ell}=(J+1)^{2\kappa-1}\,C_{3}^{\kappa,\ell}$.
\par
Consider the case $\kappa=1$ and $\ell=0$, and set $\tau=t$ in
\eqref{KSI_M6}, to obtain
\begin{equation*}
\expected\left[\,|g(t,t)|^{2}\,\right]\leq\,C_{\ssy
I}^{1,0}+\left(\,C_{\ssy I\!I}^{1,0}+C_{\ssy
I\!I\!I}^{1,0}\right)\, \int_0^{t}\expected\left[\,|g(s,s)|^{2}
\,\right]\;ds,\quad\forall \,t\in[0,t_{\star}],
\end{equation*}
which, after the application of the Gr{\"o}nwall lemma, yields
\begin{equation}\label{KSI_M7}
\expected\left[\,|g(t,t)|^2\,\right]\leq\,C_{\ssy
I}^{1,0}\,e^{(C_{\ssy I\!I}^{1,0}+C_{\ssy I\!I\!I}^{1,0})\,t},
\quad\forall\,t\in[0,t_{\star}].
\end{equation}
Now, combine \eqref{KSI_M7} and \eqref{KSI_M6} (with $\kappa=1$),
to get
\begin{equation}\label{KSI_M505}
\expected\left[\,\left|\partial_{\tau}^{\ell}g(t,\tau)\right|^{2}
\,\right]\leq\,C_{\ssy I}^{1,\ell}+\tfrac{(C_{\ssy
I\!I}^{1,\ell}+C_{\ssy I\!I\!I}^{1,\ell})\,C_{\ssy
I}^{1,0}}{C_{\ssy I\!I}^{1,0}+C_{\ssy
I\!I\!I}^{1,0}}\left[\,e^{(\,C_{\ssy I\!I}^{1,0}+C_{\ssy
I\!I\!I}^{1,0})\,t}-1\,\right],\quad\forall\,(t,\tau)\in
D_{\star},
\end{equation}
for $\ell=0,\dots,\nu$, which establishes \eqref{KSI_MaximusM_1}
for $\kappa=1$.
\par
Now, consider the case $\kappa\ge2$. Then, use \eqref{KSI_M7} and
\eqref{KSI_M6}, to obtain
\begin{equation}\label{KSI_M506}
\expected\left[\,\left|\partial_{\tau}^{\ell}g(t,\tau)\right|^{2\kappa}
\,\right]\leq\,C_{\ssy I\!V}^{\kappa,\ell}+C_{\ssy
I\!I\!I}^{\kappa,\ell}\int_0^t\expected
\left[\,|g(s,s)|^{2\kappa}\,\right]\;ds,\quad\forall\,(t,\tau)\in
D_{\star},\ \ \ell=0,\dots,\nu,
\end{equation}
where $C_{\ssy I\!V}^{\kappa,\ell}=C_{\ssy
I}^{\kappa,\ell}+C_{\ssy I\!I}^{\kappa,\ell}\,\left[\tfrac{C_{\ssy
I}^{1,0}}{C_{\ssy I\!I}^{1,0}+C_{\ssy I\!I\!I}^{1,0}}\,
\left(\,e^{(\,C_{\ssy I\!I}^{1,0}+C_{\ssy
I\!I\!I}^{1,0})\,\tmax}-1\right)\right]^{\kappa}$. Take $\ell=0$
and set $\tau=t$ in \eqref{KSI_M506}, to obtain
\begin{equation*}
\expected\left[\,|g(t,t)|^{2\kappa}\,\right]\leq\,C_{\ssy
I\!V}^{\kappa,0}+C_{\ssy I\!I\!I}^{\kappa,0}\,
\int_0^{t}\expected\left[\,|g(s,s)|^{2\kappa}
\,\right]\;ds,\quad\forall \,t\in[0,t_{\star}].
\end{equation*}
Apply again the Gr{\"o}nwall lemma, to conclude that
\begin{equation}\label{KSI_M807}
\expected\left[\,|g(t,t)|^{2\kappa}\,\right]\leq\,C_{\ssy
I\!V}^{\kappa,0}\,e^{C_{\ssy I\!I\!I}^{\kappa,0}\,t},
\quad\forall\,t\in[0,t_{\star}].
\end{equation}
Finally, combine \eqref{KSI_M807} and \eqref{KSI_M506} to have
\begin{equation*}\label{KSI_71}
\expected\left[\,|\partial_{\tau}^{\ell}g(t,\tau)|^{2\kappa}\,\right]\leq\,C_{\ssy
I\!V}^{\kappa,\ell}+\tfrac{C_{\ssy I\!I\!I}^{\kappa,\ell}\,C_{\ssy
I\!V}^{\kappa,0}}{C_{\ssy I\!I\!I}^{\kappa,0}}\,(e^{C_{\ssy
I\!I\!I}^{\kappa,0}\,t}-1),\quad\forall\,(t,\tau)\in
D_{\star},\quad \ell=0,\dots,\nu,
\end{equation*}
which yields the desired bound \eqref{KSI_MaximusM_1} for
$\kappa\ge2$.
\end{proof}
%
%
%
%
%
%
%
%
%
\begin{lem}\label{AUXLM_14}
Let $(g,Y,Z)$ be the solution of the system
\eqref{eq:1.5a}--\eqref{eq:1.5b}. Also, we assume that the functions $F$, $G$, $\Psi$,
$U:{\mathbb R}\rightarrow{\mathbb R}$ have polynomial growth $p_{\ssy F}$, $p_{\ssy G}$,
$p_{\ssy\Psi}$ and $p_{\ssy U}$
with constants $C_{\ssy F}$, $C_{\ssy G}$, $C_{\ssy\Psi}$ and
$C_{\ssy U}$, respectively.
Then, for $\kappa\in{\mathbb N}$, there exists a positive constant
$C_{\kappa}$, depending on $\kappa$ and the data of the problem,
such that
\begin{equation}\label{Moments_of_Functional}
\expected\left[\,\left|F(Y(\tmax))\right|^{2\kappa}
\,\right]+\expected\left[\, \left|G(\Lambda(\Psi(g(\tmax,\cdot)+f_{_0})))\right|^{2\kappa}
\,\right]+\expected\left[\, \left|Z(\tmax)\right|^{2\kappa}
\,\right]\leq\,C_{\kappa}.
\end{equation}
\end{lem}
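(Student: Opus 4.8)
The plan is to estimate the three terms on the left-hand side of \eqref{Moments_of_Functional} separately, in each case reducing to moment bounds already available from \lemref{AUXLM_2}. For the first term, note that $Y(\tmax)=\int_0^{\tmax}(g(s,s)+f_{_0}(s))\;ds$. Since $f_{_0}\in C^1([0,\taumax])$ is bounded, Jensen's (or H\"older's) inequality in the $s$-integral gives
\begin{equation*}
|Y(\tmax)|^{2\kappa}\le\,C\,\Bigl(1+\int_0^{\tmax}|g(s,s)|^{2\kappa}\;ds\Bigr),
\end{equation*}
so by Fubini and \eqref{KSI_M807} (the $\kappa$-th moment bound for $g(s,s)$ obtained inside the proof of \lemref{AUXLM_2}, or simply \eqref{KSI_MaximusM_1} with $\ell=0$, $\tau=t$) we get $\expected[|Y(\tmax)|^{2\kappa}]\le C_\kappa$. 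Then the polynomial growth of $F$ with degree $p_{\ssy F}$ yields $|F(Y(\tmax))|\le C_{\ssy F}(1+|Y(\tmax)|^{p_{\ssy F}})$, so $\expected[|F(Y(\tmax))|^{2\kappa}]\le C(1+\expected[|Y(\tmax)|^{2\kappa p_{\ssy F}}])\le C_\kappa$, invoking \lemref{AUXLM_2} with the integer $\kappa\,p_{\ssy F}$ in place of $\kappa$.

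For the third term I would argue similarly but one level deeper. Here $Z(\tmax)=\int_0^{\tmax}F(Y(s))\,U(g(s,s)+f_{_0}(s))\;ds$, so by H\"older in $s$,
\begin{equation*}
|Z(\tmax)|^{2\kappa}\le\,C\int_0^{\tmax}|F(Y(s))|^{2\kappa}\,|U(g(s,s)+f_{_0}(s))|^{2\kappa}\;ds,
\end{equation*}
and after a Cauchy--Schwarz split of the expectation of the integrand it suffices to bound $\expected[|F(Y(s))|^{4\kappa}]$ and $\expected[|U(g(s,s)+f_{_0}(s))|^{4\kappa}]$ uniformly in $s\in[0,\tmax]$. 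The first is handled exactly as in the previous paragraph (with $Y(s)$ in place of $Y(\tmax)$; note the bound \eqref{KSI_M807} is uniform on $[0,t_\star]$), and the second follows from the polynomial growth of $U$ together with \eqref{KSI_MaximusM_1} applied with $\ell=0$ and a suitably large integer exponent, plus boundedness of $f_{_0}$.

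For the middle term, write $w:=\Psi(g(\tmax,\cdot)+f_{_0})$, so $\Lambda(w)=\int_{\tau_a}^{\taumax}w(\tau)\;d\tau$. By H\"older in $\tau$,
\begin{equation*}
|\Lambda(w)|^{2\kappa}\le\,(\taumax-\tau_a)^{2\kappa-1}\int_{\tau_a}^{\taumax}|\Psi(g(\tmax,\tau)+f_{_0}(\tau))|^{2\kappa}\;d\tau,
\end{equation*}
and then polynomial growth of $\Psi$ plus \eqref{KSI_MaximusM_1} (with $\ell=0$, $t=\tmax$, and exponent $\kappa\,p_{\ssy\Psi}$) and Fubini give $\expected[|\Lambda(w)|^{2\kappa}]\le C_\kappa$; one more application of polynomial growth of $G$ and \lemref{AUXLM_2} with exponent $\kappa\,p_{\ssy G}p_{\ssy\Psi}$ then controls $\expected[|G(\Lambda(w))|^{2\kappa}]$. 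Summing the three estimates gives \eqref{Moments_of_Functional}.

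The only mild subtlety --- and the point I would be most careful about --- is bookkeeping the exponents: each composition with a polynomially growing function multiplies the required moment order, so one must apply \lemref{AUXLM_2} with $\kappa$ replaced by something like $\kappa\,p_{\ssy G}\,p_{\ssy\Psi}$ (respectively $\kappa\,p_{\ssy F}$, $2\kappa\,p_{\ssy F}$, $2\kappa\,p_{\ssy U}$), which is fine since \lemref{AUXLM_2} holds for every $\kappa\in{\mathbb N}$ and the orders are finite. There is no real analytic obstacle here; the result is essentially a corollary of the moment bounds of \lemref{AUXLM_2} combined with the polynomial-growth hypotheses.
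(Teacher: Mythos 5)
Your proposal is correct and follows essentially the same route as the paper: bound $Y$, $Z$ and $\Lambda(\Psi(\cdot))$ by H\"older's inequality in the $s$- or $\tau$-integral, split the product $F(Y)\,U(\cdot)$ in the $Z$-term (the paper uses a Young-type split into $4\kappa$-th moments where you use Cauchy--Schwarz, to the same effect), and reduce everything to the moment bounds of Lemma~\ref{AUXLM_2} with suitably enlarged exponents via the polynomial-growth hypotheses. Your exponent bookkeeping matches the paper's chain of estimates \eqref{MFunct2}--\eqref{MFunct7}.
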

%
%
%
%
%
%
%
%
%
%
\begin{proof}
Let $\kappa\in{\mathbb N}$. To simplify the notation, we set $\tau_{\star}:=\taumax$,
$t_{\star}:=\tmax$ and $\Upsilon(\tau):=g(\tmax,\tau)+f_{_0}(\tau)=f(\tmax,\tau)$ for $\tau\in[0,\taumax]$. 
Since $F$, $U$ and $G$ have polynomial growth, using the
H{\"o}lder inequality and \eqref{KSI_MaximusM_1} for $\ell=0$, we
obtain
\begin{equation}\label{MFunct2}
\expected\left[\,|F(Y(t))|^{2\kappa}\,\right]\leq\,(C_{\ssy
F})^{2\kappa}\,\,2^{2\kappa-1}
\,\left(1+\expected\left[\,|Y(t)|^{2 \kappa p_{_{F}}}\,\right]
\,\right),\quad\forall\,t\in[0,t_{\star}],
\end{equation}
\begin{equation}\label{MFunct3}
\begin{split}
\expected\left[\,|Y(t)|^{2 m}
\,\right]\leq&\,(2\,t_{\star})^{2 m-1}
\,\int_0^{t}\left(\,\expected\left[|g(s,s)|^{2 m}\,\right]
+|f_{_0}(s)|^{2 m}\,\right)\;ds\\
\leq&\,(2\,t_{\star})^{2 m-1} \,\int_0^{t_{\star}}\left(\,C^{\ssy
M}_{2 m,0}+|f_{_0}(s)|^{2 m}
\right)\;ds,
\quad\forall\,t\in[0,t_{\star}],
\quad\forall\,m\in{\mathbb N},\\
\end{split}
\end{equation}
\begin{equation}\label{MFunct4}
\begin{split}
\expected\left[\,|Z(t_{\star})|^{2\kappa}\,\right]\leq&\,(t_{\star})^{2\kappa-1}\,
\left\{\,\int_0^{t_{\star}}\expected\left[\,|F(Y(s))|^{4\kappa}\,\right]\;ds
+\int_0^{t_{\star}}\expected\left[\,|U(f(s,s))|^{4\kappa}
\,\right]\;ds\,\right\},\\
\end{split}
\end{equation}
\begin{equation}\label{MFunct5}
\begin{split}
\expected\left[\,|U(f(t,t))|^{2 m}\,\right]
\leq&\,(C_{\ssy U})^{2m}\,3^{2m-1}\,\left(\,1+|f_{_0}(t)|^{2m p_{_U}}
+\expected\left[\,|g(t,t)|^{2m p_{_U}}\,\right]\,\right)\\
\leq&\,(C_{\ssy U})^{2m}\,3^{2m-1}
\,\left(\,1+|f_{_0}(t)|^{2m p_{_U}} +C^{\ssy M}_{2m
p_{_U},0}\,\right),\quad\forall\,t\in[0,t_{\star}],
\quad\forall\,m\in{\mathbb N},
\end{split}
\end{equation}
\begin{equation}\label{MFunct6a}
\expected\left[\,|G(\Lambda(\Psi(\Upsilon)))|^{2\kappa}\,\right]
\leq\,(C_{\ssy G})^{2\kappa}\,\left(1+\expected\left[\,|\Lambda(\Psi(\Upsilon))|^{2\kappa p_{_G}}
\,\right]\right),
\end{equation}
\begin{equation}\label{MFunct6b}
\begin{split}
\expected\left[\,|\Lambda(\Psi(\Upsilon))|^{2m}\,\right]
\leq&\,(\tau_{\star}-\tau_a)^{2m-1}
\,\int_{\tau_a}^{\taumax} \expected\left[\,
|\Psi(\Upsilon(\tau))|^{2m}\,\right]\;d\tau,
\quad\forall\,m\in{\mathbb N},
\end{split}
\end{equation}
and
\begin{equation}\label{MFunct7}
\begin{split}
\expected\left[\,|\Psi(\Upsilon(\tau))|^{2 m}\,\right]\leq&\,
(C_{\ssy\Psi})^{2 m}\,3^{2m-1}\,\left(\,1+\expected\left[\,|g(t_{\star},\tau)|^{2 m p_{_{\Psi}}}\,\right]
+|f_{_0}(\tau)|^{2 m p_{_{\Psi}}}\,\right)\\
\leq&\,(C_{\ssy\Psi})^{2m}\,3^{2m-1}\,\left(\,1+|f_{_0}(\tau)|^{2m p_{_{\Psi}}} +C^{\ssy
M}_{2m p_{_{\Psi}},0}\,\right),
\quad\forall\,\tau\in[0,\tau_{\star}],
\quad\forall\,m\in{\mathbb N}.\\
\end{split}
\end{equation}
\par
Thus, we obtain \eqref{Moments_of_Functional} combining the
inequalities \eqref{MFunct2}-\eqref{MFunct7} above.
\end{proof}
%
%
%
%
\par
In Lemma~\ref{AUXLM_3}  below, we show
boundness for the moments of the numerical approximations produced
by the \text{\rm(EFD)} and the \text{\rm (EFM)} method.
%
%
%
%
%
%
\begin{lem}\label{AUXLM_3}
Let ${\mathcal I}:=\{0,\dots,N\}\times\{0,\dots,L-1\}$ and
$(\bbg_{n,\ell})_{(n,\ell)\in{\mathcal I}}$ be the
numerical approximations produced by the \text{\rm(EFD)} or the
\text{\rm (EFM)} method. Then, for $\kappa\in{\mathbb N}$, there exists
a nonnegative constant $C^{\ssy M}_{{\ssy D},\kappa}$, depending on
$\kappa$, $(\lambda_j)_{j=1}^{\ssy J}$, ${\widetilde\lambda}$,
$f_{_0}$, $C_{\xi,1}$, $\tau_{\max}$ and $t_{\max}$, such that
\begin{equation}\label{DiscreteM_0}
\max_{(n,\ell)\in{\mathcal
I}}\expected\left[\,|\bbg_{n,\ell}|^{2\kappa}\,\right]\leq\,C^{\ssy
M}_{{\ssy D},\kappa},
\end{equation}
where $C_{\ksi,1}$ is the constant in \eqref{KSI_1}.
\end{lem}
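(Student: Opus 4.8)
The plan is to mimic the proof of Lemma~\ref{AUXLM_2}, replacing the continuous It\^o calculus by its discrete Euler counterpart and Gr\"onwall's lemma by its discrete (summation) analogue. Since the only difference between the \text{\rm(EFD)} and the \text{\rm(EFE)} recursions is that $\widetilde\lambda(t_n,\tau_\ell)$ and $\lambda(t_n,\tau_\ell)$ are replaced by $\Pi\widetilde\lambda(t_n;\tau_\ell)$ and $\Pi\lambda(t_n;\tau_\ell)$, and the $L^2$-projection is a contraction in $L^\infty$ (each projected value is an average of values of the function over a subinterval), all the bounds below hold \emph{uniformly} in both methods with the same constants; I would state this observation once at the start and then argue only for \text{\rm(EFD)}.

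First I would fix $\kappa\in{\mathbb N}$ and derive, from the first line of \eqref{eq:2.1b}, a recursive inequality for $a_{n}:=\max_{0\le\ell\le L-1}\expected[|\bbg_{n,\ell}|^{2\kappa}]$ together with the auxiliary diagonal quantity $b_n:=\expected[|\bbg_{n,\ell_n}|^{2\kappa}]$, noting $b_n\le a_n$. Writing $\bbg_{n+1,\ell}$ as $\bbg_{n,\ell}$ plus a drift increment of size $O(\Delta t_n)$ plus a martingale increment, I would take $2\kappa$-th powers, expand by the discrete Jensen/convexity inequality (the factor $(J+1)^{2\kappa-1}$ as in \eqref{KSI_M3}), and estimate the three resulting terms: the ``old value'' term contributes $\expected[|\bbg_{n,\ell}|^{2\kappa}]$ up to a $1+C\Delta t_n$ factor after a Young's inequality, the drift term contributes $C\,\Delta t_n\,(1+\expected[|\bbg_{n,\ell_n}|^{2\kappa}])$ via \eqref{KSI_1} and H\"older (the linear growth of $\xi^2$ gives $|\xi^2(\bbg_{n,\ell_n}+f_0(t_n))|^{2\kappa}\le C(1+|\bbg_{n,\ell_n}|^{2\kappa})$ after absorbing $|f_0|\le\|f_0\|_\infty$), and the stochastic term, after conditioning on $\mathcal F_{t_n}$ and using $\expected[|\Delta W_n|^{2\kappa}\mid\mathcal F_{t_n}]=(2\kappa-1)!!\,(\Delta t_n)^\kappa J^\kappa$ or the Burkholder--Davis--Gundy inequality, contributes $C\,(\Delta t_n)^\kappa(1+\expected[|\bbg_{n,\ell_n}|^{2\kappa}])$, which for $\Delta t_n\le 1$ is again bounded by $C\,\Delta t_n(1+a_n)$; the cross terms vanish in expectation or are controlled by Young's inequality. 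Collecting, I obtain $a_{n+1}\le (1+C\,\Delta t_n)\,a_n + C\,\Delta t_n$ with $a_0=0$.

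Next I would run the diagonal recursion first: setting $\ell=\ell_n$ and using $b_n\le a_n$ the same computation gives the closed recursion $a_{n+1}\le(1+C\Delta t_n)a_n+C\Delta t_n$ directly (since $\ell_n\le L-1$ always, the maximum over $\ell$ is attained consistently). Applying the discrete Gr\"onwall inequality together with $\prod_{k=0}^{n-1}(1+C\Delta t_k)\le e^{C\sum\Delta t_k}\le e^{C\,t_{\max}}$ yields $a_n\le C\,e^{C\,t_{\max}}\,t_{\max}=:C^{\ssy M}_{{\ssy D},\kappa}$ uniformly in $n$, which is exactly \eqref{DiscreteM_0}. The constant depends only on $\kappa$, $J$, $\|f_0\|_\infty$ (hence on $f_0$), $C_{\xi,1}$, $t_{\max}$, $\tau_{\max}$, and $\max_{t\le t_{\max},\tau\le\tau_{\max}}(|\widetilde\lambda(t,\tau)|,|\lambda_j(t,\tau)|)$, as claimed; the $L^\infty$-contraction property of $\Pi$ ensures these same sup-norms control the \text{\rm(EFE)} coefficients, so no new constant is needed.

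The main obstacle I anticipate is bookkeeping rather than conceptual: keeping the constants independent of $N$ and $L$ while handling the odd cross terms (products mixing the $O(\Delta t_n)$ drift with the $O(\sqrt{\Delta t_n})$ martingale increment, which contribute intermediate powers of $\Delta t_n$) and ensuring the discrete BDG/moment estimate for $|\Delta W_n|^{2\kappa}$ is applied with the correct conditioning so that $\expected[|\bbg_{n,\ell_n}|^{2\kappa}]$ appears with a coefficient $O(\Delta t_n)$ and not $O(1)$. Once the recursion $a_{n+1}\le(1+C\Delta t_n)a_n+C\Delta t_n$ is secured, the discrete Gr\"onwall step is routine and the proof closes.
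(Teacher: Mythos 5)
Your proposal is correct, but it follows a genuinely different route from the paper. The paper does not expand one step at a time: it first sums the recursion into the Duhamel-type representation $\bbg_{n,\ell}=\sum_{m=0}^{n-1}\Delta t_m\,\nu^{m,\ell}\ksi^2(\cdot)+\sum_{m,j}\mu_j^{m,\ell}\ksi(\cdot)\Delta W_m^j$, then bounds the $2\kappa$-th moment of the accumulated drift by H\"older and of the accumulated martingale by the Gaussian moment identity applied to the whole sum. The price is that the resulting recursion mixes $\sum_m\Delta t_m\,\expected[|\bbg_{m,\ell_m}|^{2\kappa}]$ with $\bigl(\sum_m\Delta t_m\,\expected[|\bbg_{m,\ell_m}|^{2}]\bigr)^{\kappa}$, so the paper must bootstrap: first close the case $\kappa=1$ by a discrete Gr\"onwall comparison, then feed that bound back in to linearize the recursion for $\kappa\ge 2$ and apply Gr\"onwall a second time. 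Your one-step expansion, with the martingale cross terms killed by conditioning and the remaining terms absorbed by Young's inequality, delivers the linear recursion $a_{n+1}\le(1+C\Delta t_n)a_n+C\Delta t_n$ directly for every $\kappa$ and so avoids the two-stage bootstrap entirely; what it costs is exactly the binomial bookkeeping you flag (every mixed power $X^{2\kappa-i-j}D^iM^j$ must be shown to carry a full factor $\Delta t_n$ after Young, using $(\Delta t_n)^{j/2}\le(t_{\max})^{j/2-1}\Delta t_n$ for $j\ge2$), which the paper's summed form sidesteps. Both arguments end with the same discrete Gr\"onwall step, and both treat (EFD) and (EFE) uniformly; your explicit appeal to the $L^\infty$-contractivity of the averaging projection $\Pi$ makes precise what the paper leaves implicit in its choice of constants $\max_{D_\star}|\widetilde\lambda|$, $\max_{D_\star}|\lambda_j|$. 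The only slip is cosmetic: for a $J$-dimensional increment $\expected[|\Delta W_n|^{2\kappa}]=(\Delta t_n)^{\kappa}\prod_{i=0}^{\kappa-1}(J+2i)$ rather than $(2\kappa-1)!!\,(\Delta t_n)^{\kappa}J^{\kappa}$, but only the order $(\Delta t_n)^{\kappa}$ matters, so nothing breaks.
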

%
%
%
%
%
%
%
\begin{proof}
Let $D_{\star}\equiv[0,\tmax]\times[0,\taumax]$,
$\kappa\in{\mathbb N}$, $(n,\ell)\in{\mathcal I}$ with
$n\ge1$. Then, from \eqref{eq:2.1b} and \eqref{eq:2.2b}, we
conclude that
\begin{equation}\label{DiscreteM_1}
\bbg_{n,\ell}=\sum_{m=0}^{n-1}\Delta
t_m\,\ksi^2(\bbg_{m,\ell_m}+f_{_0}^m)\,\nu^{m,\ell}
+\sum_{m=0}^{n-1}\sum_{j=1}^{\ssy J}
\ksi\big(\bbg_{m,\ell_m}+f_{_0}^m\big)
\,\mu^{m,\ell}_j\,\Delta{W_m^j},
\end{equation}
where $f_{_0}^m:=f_{_0}(t_m)$,
$\nu^{m,\ell}={\widetilde\lambda}(t_m,\tau_{\ell})$ and
$\mu_j^{m,\ell}=\lambda_j(t_m,\tau_{\ell})$ for the (EFD) method,
and $\nu^{m,\ell}=\Pi{\widetilde\lambda}(t_m;\tau_{\ell})$ and
$\mu_j^{m,\ell}=\Pi\lambda_j(t_m;\tau_{\ell})$ for the (EFE)
method. Thus, we obtain
\begin{equation}\label{DiscreteM_2}
\expected\left[\,|\bbg_{n,\ell}|^{2\kappa}\,\right]
\leq\,(J+1)^{2\kappa-1}\,(\,T_{1,\kappa}^{n,\ell}+T_{2,\kappa}^{n,\ell}\,)
\end{equation}
where
\begin{equation*}
\begin{split}
T_{1,\kappa}^{n,\ell}:=&\,\expected\left[\,\left(\,\sum_{m=0}^{n-1}\Delta
t_m\,\,\nu^{m,\ell}\,\,\ksi^2\big(\bbg_{m,\ell_m}+f_{_0}^m\big)
\right)^{2\kappa}\,\right],\\
T_{2,\kappa}^{n,\ell}:=&\,\sum_{j=1}^{\ssy
J}\expected\left[\left(\,\sum_{m=0}^{n-1}\,\mu^{m,\ell}_j\,
\ksi(\bbg_{m,\ell_m}+f_{_0}^m)
\,\Delta{W}_m^j\right)^{2\kappa}\right].\\
\end{split}
\end{equation*}
Using \eqref{KSI_1} we bound $T_{1,\kappa}^{n,\ell}$ as follows
\begin{equation*}
\begin{split}
T_{1,\kappa}^{n,\ell}\leq&\,(C_{\xi,1})^{2\kappa}\,\expected\left[
\left(\sum_{m=0}^{n-1}\Delta{t_m}\,|\nu^{m,\ell}|
\,(1+|f_{_0}^m|+|\bbg_{m,\ell_m}|)
\right)^{2\kappa}\right]\\
\leq&\,2^{2\kappa-1}\,(C_{\xi,1})^{2\kappa}\,\expected\left[\,\left(
\sum_{m=0}^{n-1}\Delta{t_m}
\,|\nu^{m,\ell}|\,(1+|f_{_0}^m|)\right)^{2\kappa}
+\left(\sum_{m=0}^{n-1}\Delta{t_m}|\nu^{m,\ell}|
\,|\bbg_{m,\ell_m}|\right)^{2\kappa}\,\right]\\
\end{split}
\end{equation*}
which, after applying the H{\"o}lder inequality, yields
\begin{equation}\label{DiscreteM_3}
T_{1,\kappa}^{n,\ell}\leq\,C_{{\ssy D},1,\kappa}\,\left(\,\tmax
+\sum_{m=0}^{n-1}\Delta{t_m}\,\expected\left[
\,\left|\bbg_{m,\ell_m}\right|^{2\kappa}\right]
\,\right).
\end{equation}
where $C_{{\ssy D},1,\kappa}=(2\,C_{\xi,1})^{2\kappa}
\,(\tmax)^{2\kappa-1}
\,\max_{\ssy D_{\star}}[\,|{\widetilde\lambda}|\,(1+|f_{_0}|)
\,]^{2\kappa}$.
Also, using the properties of independent Gaussian random
variables and \eqref{KSI_1}, we obtain
\begin{equation*}
\begin{split}
T_2^{n,\ell}\leq&\,(2\kappa-1)!!\,\sum_{j=1}^{\ssy J}
\left(\,\sum_{m=0}^{n-1}\Delta{t_m}\,(\mu_j^{m,\ell})^2\,
\expected\left[\,\xi^2(\bbg_{m,\ell_m}+f_{_0}^m)
\,\right]\,\right)^{\kappa}\\
\leq&\,(2\kappa-1)!!\,(C_{\xi,1})^{\kappa} \,\max_{\ssy
D_{\star}}|\lambda|^{2\kappa}\,
\left(\,\sum_{m=0}^{n-1}\Delta{t_m}\,\left(\,2+|f_{_0}^m|
+\expected\left[\,|\bbg_{m,\ell_m}|^2
\,\right]\,\right)\,\right)^{\kappa}\\
\leq&\,(2\kappa-1)!!\,(C_{\xi,1})^{\kappa} \,\max_{\ssy
D_{\star}}|\lambda|^{2\kappa}\,\left[
\tmax\,\max_{[0,\tmax]}\left(2+|f_{_0}|\right)
+\sum_{m=0}^{n-1}\Delta{t_m}\,\expected\left[\,|\bbg_{m,\ell_m}|^2
\,\right]\right]^{\kappa}\\
\end{split}
\end{equation*}
which  yields that
\begin{equation}\label{DiscreteM_4}
T_2^{n,\ell}\leq\,C_{{\ssy D},2,\kappa}\,\left[
(\tmax)^{\kappa}
+\left(\,\sum_{m=0}^{n-1}\Delta{t_m}
\,\expected\left[\,|\bbg_{m,\ell_m}|^2
\,\right]\,\right)^{\kappa}\,\right]
\end{equation}
where $C_{{\ssy D},2,\kappa}=2^{\kappa-1}\,(2\kappa-1)!!\,(C_{\xi,1})^{\kappa}
\,\max_{\ssy D_{\star}}|\lambda|^{2\kappa}\,\max_{[0,\tmax]}\left(2+|f_{_0}|\right)^{\kappa}$.
%
%
Now, combining \eqref{DiscreteM_2}, \eqref{DiscreteM_3} and
\eqref{DiscreteM_4} we obtain
\begin{equation}\label{DiscreteM_5}
\expected\left[\,|\bbg_{n,\ell}|^{2\kappa}\,\right]\leq\,C^{{\ssy
I,D}}_{\kappa}+C^{{\ssy I\!I,D}}_{\kappa}\,\sum_{m=0}^{n-1}\Delta{t_m}
\,\expected\left[\,|\bbg_{m,\ell_m}|^{2\kappa}\,\right]
+C^{{\ssy
I\!I\!I,D}}_{\kappa}\,\left(\,\sum_{m=0}^{n-1} \Delta{t_m}
\,\expected\left[\,|\bbg_{m,\ell_m}|^{2}\,\right]
\,\right)^{\kappa},
\end{equation}
where $C^{{\ssy I,D}}_{\kappa}$, $C^{{\ssy I\!I,D}}_{\kappa}$ and
$C^{{\ssy I\!I\!I,D}}_{\kappa}$ are constants that depend on $J$,
$\kappa$, $\tmax$, $C_{{\ssy D},1,\kappa}$ and $C_{{\ssy D},2,\kappa}$.
\par
First, let us consider the case $\kappa=1$. Then, setting
$\ell=\ell_n$ in \eqref{DiscreteM_5}, we obtain
\begin{equation}\label{DiscreteM_66}
\expected\left[\,|\bbg_{n,\ell_n}|^2\,\right]\leq\,C^{{\ssy
I,D}}_{1}+C^{{\ssy I\!V,D}}_{1}\,\sum_{m=0}^{n-1}\Delta{t_m}
\,\expected\left[\,|\bbg_{m,\ell_m}|^{2}\,\right],\quad
n=1,\dots,N,
\end{equation}
where $C^{{\ssy I\!V,D}}_{1}=C^{{\ssy I\!I,D}}_{1}+C^{{\ssy
I\!I\!I,D}}_{1}$. Setting $\beta_n:=\tfrac{1}{C^{{\ssy
I,D}}_{1}}\,\expected\left[\,|\bbg_{n,\ell_n}|^2\,\right]$ for
$n=0,\dots,N$, \eqref{DiscreteM_66} is written equivalently as follows
\begin{equation}\label{Jeddah2012a}
\beta_n\leq 1+ C_1^{{\ssy I\!V,D}}\,\sum_{m=0}^{n-1}\Delta{t}_m\,\beta_m,
\quad n=1,\dots,N.
\end{equation}
Now, setting $\rho_1:=1$ and $\rho_n:=1+C^{{\ssy
I\!V,D}}_{1}\,\sum_{m=1}^{n-1}\Delta{t_m}\,\rho_m$ for
$n=2,\dots,N$ and observing that $\beta_0=0$, we use
\eqref{Jeddah2012a} and apply a simple induction argument to get
\begin{equation}\label{DiscreteM_6}
\beta_n\leq\rho_n,\quad n=1,\dots,N.
\end{equation}
Since $\rho_n=(1+C^{{\ssy
I\!V,D}}_{1}\,\Delta{t_{n-1}})\,\rho_{n-1}$ for $n=2,\dots,N$, we
use the inequality $e^x\ge1+x$ for $x\ge0$, and a simple induction
argument to conclude that
\begin{equation}\label{DiscreteM_7}
\rho_n\leq\,\exp(C^{{\ssy I\!V,D}}_{1}\,t_{n}),\quad n=1,\dots,N.
\end{equation}
Thus, \eqref{DiscreteM_6} and \eqref{DiscreteM_7} yield
\begin{equation}\label{DiscreteM_8}
\max_{0\leq{m}\leq{\ssy N}}\expected\left[\,
|\bbg_{m,\ell_m}|^2\,\right]\leq\,C^{{\ssy I,D}}_{1}\,\exp(C^{{\ssy
I\!V,D}}_{1}\,\tmax),
\end{equation}
which, along with \eqref{DiscreteM_5}, establishes
\eqref{DiscreteM_0} for $\kappa=1$.
\par
Now, we assume that $\kappa\ge2$. Then, we combine
\eqref{DiscreteM_5} and \eqref{DiscreteM_8} to obtain
\begin{equation}\label{DiscreteM_55}
\expected\left[\,|\bbg_{n,\ell_n}|^{2\kappa}\,\right]\leq\,C^{{\ssy
V,D}}_{\kappa}+C^{{\ssy I\!I,D}}_{\kappa}\,\sum_{m=0}^{n-1}\Delta{t_m}
\,\expected\left[\,|\bbg_{m,\ell_m}|^{2\kappa}\,\right],\quad
n=1,\dots,N,
\end{equation}
where $C^{{\ssy V,D}}_{\kappa}=C^{{\ssy I,D}}_{\kappa}+C^{{\ssy
I\!I\!I,D}}_{\kappa}\,(\tmax)^{\kappa}\,(C^{\ssy
I,D}_1\,\exp(C^{{\ssy I\!V,D}}_1\,\tmax))^{\kappa}$. Then,
proceeding as in obtaining \eqref{DiscreteM_8} from
\eqref{DiscreteM_66}, we arrive at
\begin{equation}\label{DiscreteM_67}
\max_{0\leq{m}\leq{\ssy N}}\expected\left[\,
|\bbg_{m,\ell_m}|^{2\kappa}\,\right]\leq\,C^{{\ssy
V,D}}_{1}\,\exp(C^{{\ssy I\!I,D}}_{1}\,\tmax),
\end{equation}
which, along with \eqref{DiscreteM_5} and \eqref{DiscreteM_8},
yields \eqref{DiscreteM_0} for $\kappa\ge2$.
\end{proof}
\subsection{Estimates for the consistency error}
%
%
In Lemmas~\ref{CONLM_1} and ~\ref{CONLM_2} below, we show that
some Lipschitz-type properties for the solution $g$ to the problem
\eqref{eq:1.5a}--\eqref{eq:1.5b} hold.
%
%
%
%
%
%
\begin{lem}\label{CONLM_1}
Let $\kappa\in{\mathbb N}$ and $g$ be the solution of
\eqref{eq:1.5a}--\eqref{eq:1.5b}. Then, it holds that
\begin{equation}\label{LipTau1}
\expected\left[\,|g(t,\tau_1)-g(t,\tau_2)|^{2\kappa}\,\right]\leq\,C^{\ssy
M}_{\kappa,1}\,
|\tau_1-\tau_2|^{2\kappa},\quad\forall\,\tau_1,\tau_2\in[0,\tau_{\max}],
\quad\forall\,t\in[0,t_{\max}],
\end{equation}
where $C^{\ssy M}_{\kappa,1}$ is the constant in \eqref{KSI_MaximusM_1} for $\ell=1$.
\end{lem}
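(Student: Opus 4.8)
The plan is to express the difference $g(t,\tau_1)-g(t,\tau_2)$ via the integral form of \eqref{eq:1.5a}. Writing out the SDE for $g(t,\tau)$ and subtracting, the coefficient functions $\xi^2(g(s,s)+f_{_0}(s))$ and $\xi(g(s,s)+f_{_0}(s))$ do not depend on $\tau$, so they pull through cleanly and we obtain
\begin{equation*}
g(t,\tau_1)-g(t,\tau_2)=\int_0^t \xi^2(g(s,s)+f_{_0}(s))\,\bigl(\tlambda(s,\tau_1)-\tlambda(s,\tau_2)\bigr)\,ds
+\int_0^t \xi(g(s,s)+f_{_0}(s))\,\bigl(\lambda(s,\tau_1)-\lambda(s,\tau_2)\bigr){\cdot}dW(s).
\end{equation*}
Then I would bound $|\tlambda(s,\tau_1)-\tlambda(s,\tau_2)|\leq \|\partial_\tau\tlambda\|_{\infty}\,|\tau_1-\tau_2|$ and similarly for each $\lambda_j$, using that $\partial_\tau\tlambda$ and $\partial_\tau\lambda_j$ are continuous on the compact set $D_{\star}$ (the hypothesis of Lemma~\ref{AUXLM_2} with $\nu=1$ is exactly what guarantees this), hence bounded.

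Next I would take $2\kappa$-th moments. Using the elementary inequality $(a+b)^{2\kappa}\le 2^{2\kappa-1}(a^{2\kappa}+b^{2\kappa})$ to split the drift and diffusion contributions, I would apply the H\"older inequality to the drift term and the Burkholder--Davis--Gundy inequality (or, since the integrand is bounded in $L^{2\kappa}$, the moment bound $\expected[(\int_0^t H\,dW)^{2\kappa}]\le C\,t^{\kappa-1}\int_0^t\expected[|H|^{2\kappa}]\,ds$) to the stochastic term. Each resulting term carries a factor $|\tau_1-\tau_2|^{2\kappa}$ from the Lipschitz bounds on $\tlambda,\lambda$, and a factor $\expected[|\xi^2(g(s,s)+f_{_0}(s))|^{2\kappa}]$ or $\expected[|\xi(g(s,s)+f_{_0}(s))|^{2\kappa}]$. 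These last quantities are bounded uniformly in $s\in[0,t_{\max}]$: by \eqref{KSI_1} they are controlled by $1+|f_{_0}(s)|+\expected[|g(s,s)|^{2\kappa}]$ (after another splitting), and $\expected[|g(s,s)|^{2\kappa}]$ is bounded by \eqref{KSI_MaximusM_1} of Lemma~\ref{AUXLM_2} with $\ell=0$. Collecting constants and bounding $t\le t_{\max}$ gives \eqref{LipTau1}; one should check that the constant can be written in terms of $C^{\ssy M}_{\kappa,1}$ as claimed, which amounts to absorbing the $\ell=1$ derivative bounds of $\lambda,\tlambda$ that define $C^{\ssy M}_{\kappa,1}$ together with the other data-dependent factors.

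I do not expect a genuine obstacle here: the key simplification is that the volatility coefficients in \eqref{eq:1.5a} are $\tau$-independent, so the difference solves an explicit (non-recursive) integral identity rather than another SDE, and no Gr\"onwall argument is needed. The only mild care required is (i) invoking the right moment inequality for the It\^o integral with the correct power of $t$, and (ii) keeping track that the hypotheses of Lemma~\ref{AUXLM_2} at level $\nu\ge 1$ are in force so that $\partial_\tau\tlambda$ and $\partial_\tau\lambda_j$ are bounded on $D_{\star}$ and so that $C^{\ssy M}_{\kappa,1}$ is defined. If one wanted to avoid even the appeal to differentiability of $\lambda$, one could instead use the mean-value form directly, but matching the stated constant $C^{\ssy M}_{\kappa,1}$ makes the derivative formulation the natural route.
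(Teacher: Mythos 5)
Your proof is correct, but it takes a genuinely different route from the paper. The paper's proof is three lines: it writes $g(t,\tau_1)-g(t,\tau_2)=\int_{\tau_1}^{\tau_2}\partial_\tau g(t,\tau)\,d\tau$, applies the H\"older inequality to get the factor $|\tau_2-\tau_1|^{2\kappa}$, and then invokes the moment bound \eqref{KSI_MaximusM_1} with $\ell=1$ directly — which is why the constant in the statement is literally $C^{\ssy M}_{\kappa,1}$. You instead go back to the integral form of the SDE, exploit that the volatility coefficients are $\tau$-independent so the difference is an explicit (non-recursive) identity, and control the drift and stochastic terms via the Lipschitz continuity of $\tlambda,\lambda$ in $\tau$ together with the $\ell=0$ moment bound. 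This is essentially a re-derivation, in difference form, of the $\ell=1$ case of Lemma~\ref{AUXLM_2}: the same hypotheses (continuity of $\partial_\tau\tlambda$ and $\partial_\tau\lambda_j$ on $D_\star$) are needed either way, and your estimates mirror those for $T^{\ell}_{1,\kappa}$ and $T^{\ell}_{2,\kappa}$ in that lemma's proof. What the paper's route buys is brevity and the exact constant $C^{\ssy M}_{\kappa,1}$; what yours buys is independence from the differentiability of $g$ in $\tau$ (you only need the $\ell=0$ moment bound), at the cost that your constant, while depending on the same data, is not literally the one named in the statement — the caveat you yourself flag. Since Lemma~\ref{AUXLM_2} is already proved at this point in the paper, reusing it is the more economical choice, but your argument is sound.
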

%
%
%
%
%
%
%
%
%
%
\begin{proof}
Let $t\in[0,\tmax]$ and $\tau_1$, $\tau_2\in[0,\tau_{\max}]$ with
$\tau_2\ge\tau_1$. Then, applying the H{\"o}lder inequality, we
have
\begin{equation*}
\begin{split}
\expected\left[\,|g(t,\tau_1)-g(t,\tau_2)|^{2\kappa}\,\right]=&\,
\expected\left[\,\left|\int_{\tau_1}^{\tau_2}
\partial_{\tau}g(t,\tau)\;d\tau\right|^{2\kappa}\,\right]\\
\leq&\,|\tau_2-\tau_1|^{2\kappa-1}\,\int_{\tau_1}^{\tau_2}
\expected\left[|\partial_{\tau}g(t,\tau)|^{2\kappa}\,\right]\;d\tau\\\
\leq&\,|\tau_2-\tau_1|^{2\kappa}\,\max_{\tau\in[\tau_1,\tau_2]}
\expected\left[|\partial_{\tau}g(t,\tau)|^{2\kappa}\,\right].\\
\end{split}
\end{equation*}
Thus, we obtain \eqref{LipTau1} combining the inequality above and
\eqref{KSI_MaximusM_1} for $\ell=1$.
\end{proof}
\begin{lem}\label{CONLM_2}
Let $\kappa\in{\mathbb N}$ and $g$ be the solution of
\eqref{eq:1.5a}--\eqref{eq:1.5b}. Then, there exists a nonnegative
constant $C_{\ssy {\rm Lip}}$, depending on $\kappa$, $J$,
$\lambda$, ${\widetilde\lambda}$, $f_{_0}$, $C_{\xi,1}$,
$\tau_{\max}$ and $t_{\max}$, such that
\begin{equation}\label{LipT1}
\expected\left[\,|g(t_1,\tau)-g(t_2,\tau)|^{2\kappa}\,\right]\leq\,C_{\ssy
{\rm
Lip}}\,|t_1-t_2|^{\kappa},\quad\forall\,t_1,t_2\in[0,t_{\max}],
\quad\forall\,\tau\in[0,\tau_{\max}].
\end{equation}
\end{lem}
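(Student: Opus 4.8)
The plan is to estimate the increment $g(t_1,\tau)-g(t_2,\tau)$ directly from the integral form of \eqref{eq:1.5a}. Assume without loss of generality $t_2 \ge t_1$. Then
\begin{equation*}
g(t_2,\tau)-g(t_1,\tau)=\int_{t_1}^{t_2}\ksi^2(g(s,s)+f_{_0}(s))\,\tlambda(s,\tau)\;ds
+\int_{t_1}^{t_2}\ksi(g(s,s)+f_{_0}(s))\,\lambda(s,\tau){\cdot}dW(s).
\end{equation*}
Raising to the power $2\kappa$ and using the elementary inequality $(a+b)^{2\kappa}\le 2^{2\kappa-1}(a^{2\kappa}+b^{2\kappa})$, it suffices to bound the $2\kappa$-th moment of each of the two terms separately.

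For the drift term I would apply the H{\"o}lder inequality in $s$ over the interval $[t_1,t_2]$, picking up a factor $|t_2-t_1|^{2\kappa-1}$, then use the linear growth bound \eqref{KSI_1} on $\ksi^2$ together with boundedness of $\tlambda$ on $D_{\star}$; the resulting integrand is controlled by $1+|f_{_0}(s)|+|g(s,s)|$, whose $2\kappa$-th moment is bounded uniformly in $s$ by \eqref{KSI_MaximusM_1} (with $\ell=0$) from Lemma~\ref{AUXLM_2}. This gives a bound of the form $C\,|t_2-t_1|^{2\kappa}$, which is $\le C\,|t_2-t_1|^{\kappa}$ on the bounded interval $[0,t_{\max}]$. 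For the stochastic integral term I would invoke the Burkholder--Davis--Gundy inequality (or, since the moments needed are even, the standard moment bound for It{\^o} integrals as already used for $T_{2,\kappa}^{\ell}$ in the proof of Lemma~\ref{AUXLM_2}): the $2\kappa$-th moment of $\int_{t_1}^{t_2}\ksi(\cdots)\lambda(s,\tau)\cdot dW(s)$ is controlled by $C\,\left(\int_{t_1}^{t_2}\expected[\ksi^2(g(s,s)+f_{_0}(s))]|\lambda(s,\tau)|^2\,ds\right)^{\kappa}$. Bounding $\ksi^2$ by \eqref{KSI_1}, $|\lambda|$ by its maximum on $D_{\star}$, and $\expected[|g(s,s)|]$ via \eqref{KSI_MaximusM_1}, the integrand is bounded uniformly, so this expression is $\le C\,|t_2-t_1|^{\kappa}$.

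Combining the two estimates yields \eqref{LipT1} with a constant $C_{\ssy {\rm Lip}}$ depending only on $\kappa$, $J$, $\lambda$, ${\widetilde\lambda}$, $f_{_0}$, $C_{\xi,1}$, $\tau_{\max}$ and $t_{\max}$, as claimed. I do not expect any genuine obstacle here: the only point requiring a little care is that the two terms scale differently in $|t_1-t_2|$ — the drift contributes $|t_1-t_2|^{2\kappa}$ and the diffusion contributes $|t_1-t_2|^{\kappa}$ — so the diffusion term dictates the stated H{\"o}lder exponent $\kappa$, and one absorbs the higher power of the (bounded) time increment into the constant. Everything else is a routine repetition of the moment estimates already carried out in Lemma~\ref{AUXLM_2}, now applied on a subinterval $[t_1,t_2]$ rather than on $[0,t]$.
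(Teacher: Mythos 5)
Your proposal is correct and follows essentially the same route as the paper's own proof: both write the increment in integral form over $[t_1,t_2]$, split into the drift and stochastic-integral contributions, bound the drift via H\"older and \eqref{KSI_1} to get a factor $|t_1-t_2|^{2\kappa}$, bound the It{\^o} integral via the even-moment estimate to get $|t_1-t_2|^{\kappa}$, and close with the uniform moment bound \eqref{KSI_MaximusM_1} for $\ell=0$. Your observation that the diffusion term dictates the exponent $\kappa$ is exactly the reason the paper states the H\"older-$1/2$-type rate.
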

%
%
%
%
%
%
%
%
%
%
\begin{proof}
Let $D_{\star}\equiv[0,t_{\max}]\times[0,\tau_{\max}]$,
$\tau\in[0,\tau_{\max}]$ and $t_1$, $t_2\in[0,\tmax]$ with $t_2\ge
t_1$. Proceeding as in the proof of Lemma~\ref{AUXLM_2} we obtain
\begin{equation}\label{LipT2}
\expected\left[\,|g(\tau,t_1)-g(\tau,t_2)|^{2\kappa}\,\right]
\leq\,(J+1)^{2\kappa-1}\,\left[\,B_{\kappa,\ssy
I}(\tau;t_1,t_2)+B_{\kappa,\ssy I\!I}(\tau;t_1,t_2)\,\right]
\end{equation}
where
\begin{equation*}
\begin{split}
B_{\kappa,\ssy
I}(\tau;t_1,t_2)=&\,\expected\left[\left(\int_{t_1}^{t_2}{\widetilde\lambda}(s,\tau)
\,\xi^2(g(s,s)+f_{_0}(s))\;ds\right)^{2\kappa}\right],\\
B_{\kappa,\ssy
I\!I}(\tau;t_1,t_2)=&\,(2\kappa-1)!!\,\sum_{j=1}^{\ssy
J}\,\left(\,\int_{t_1}^{t_2}(\lambda_j(s,\tau))^2
\,\expected\left[\,\ksi^2(g(s,s)+f_{_0}(s))\,\right]\;ds\right)^{\kappa}.\\
\end{split}
\end{equation*}
Using the H{\"o}lder inequality and \eqref{KSI_1} we obtain
\begin{equation}\label{LipT3}
\begin{split}
B_{\kappa, {\ssy I}}(\tau;t_1,t_2)
\leq&\,(2\,C_{\xi,1})^{2\kappa}\,\max\limits_{\ssy D_{\star}}
|{\widetilde\lambda}|^{2\kappa}\,\expected\left[
\,\left(\int_{t_1}^{t_2}(1+|f_{_0}(s)|)\;ds\right)^{2\kappa}
+\left(\int_{t_1}^{t_2}|g(s,s)|\;ds\right)^{2\kappa}\right]\\
\leq&\,(2\,C_{\xi,1})^{2\kappa}\,\max\limits_{\ssy D_{\star}}
|{\widetilde\lambda}|^2\,|t_1-t_2|^{2\kappa}\,
\,\left(\max_{s\in[t_1,t_2]}(1+|f_{_0}(s)|)^{2\kappa}
+\max_{s\in[t_1,t_2]}\expected\left[\,|g(s,s)|^{2\kappa}\,\right]\right)\\
\end{split}
\end{equation}
and
\begin{equation}\label{LipT4}
\begin{split}
B_{\kappa, {\ssy
I\!I}}(\tau;t_1,t_2)\leq&\,(2\kappa-1)!!\,(C_{\xi,1})^{\kappa}\,\left(\sum_{j=1}^{\ssy
J}\max_{\ssy D_{\star}}|\lambda_j|^{2\kappa}\right)
\,\left(\int_{t_1}^{t_2}\left(2+|f_{_0}(s)|+\expected\left[\,|g(s,s)|^2
\,\right]\,\right)\;ds\right)^{\kappa}\\
&\hskip-2.0truecm\leq\,(2\kappa-1)!!\,(C_{\xi,1})^{\kappa}\,\left(\sum_{j=1}^{\ssy
J}\max_{\ssy
D_{\star}}|\lambda_j|^{2\kappa}\right)\,|t_2-t_1|^{\kappa}\,
\,\max_{s\in[t_1,t_2]}\left(2+|f_{_0}(s)|+\expected\left[\,|g(s,s)|^2
\,\right]\,\right)^{\kappa}.\\
\end{split}
\end{equation}
Thus, \eqref{LipT1} follows easily from \eqref{LipT2},
\eqref{LipT3}, \eqref{LipT4} and \eqref{KSI_MaximusM_1} for
$\ell=0$.
\end{proof}
%
%
%
In Proposition~\ref{CONPRO_1} that follows, we prove a consistency
result for the {\rm (EFD)} and {\rm(EFE)} methods defined in
Section~\ref{sec:2}.
%
%
%
%
\begin{proposition}\label{CONPRO_1}
Let $\kappa\in{\mathbb N}$, $g$ be the solution of
\eqref{eq:1.5a}--\eqref{eq:1.5b},
%
%
$f_{_0}^m:=f_{_0}(t_m)$ for $m=0,\dots,N$, and
${\mathcal K}_{n,\ell}$ be defined by
\begin{equation}\label{DiscreteMC_1}
{\widehat g}_{n+1,\ell}={\widehat
g}_{n,\ell}+\Delta{t_n}\,\nu^{n,\ell}
\,\ksi^2({\widehat g}_{n,\ell_n}+f_{_0}^n)
+\sum_{j=1}^{\ssy J}\mu_j^{n,\ell}\,\ksi({\widehat
g}_{n,\ell_n}+f_{_0}^n)\,\Delta
W_n^j+{\mathcal K}_{n,\ell},
\end{equation}
for $n=0,\dots,N-1$ and $\ell=0,\dots,L-1$, where
$\nu^{n,\ell}={\widetilde\lambda}(t_n,\tau_{\ell})$,
$\mu_j^{n,\ell}=\lambda_j(t_m,\tau_{\ell})$ and ${\widehat
g}_{m,\ell}=g(t_m,\tau_{\ell})$ for the {\rm(EFD)} method, and
$\nu^{m,\ell}=\Pi{\widetilde\lambda}(t_m;\tau_{\ell})$ and
$\mu_j^{m,\ell}=\Pi\lambda_j(t_m;\tau_{\ell})$ and ${\widehat
g}_{m,\ell}=\Pi{g(t_m;\tau_{\ell})}$ for the {\rm(EFE)} method.
Also, we assume that $f_{_0}\in C^1([0,\taumax];\rset)$ and
$\partial_t{\widetilde\lambda}$, $\partial_{t\tau}{\widetilde\lambda}$, 
$(\partial_t\lambda_j)_{j=1}^{\ssy J}$, 
$(\partial_{t\tau}\lambda_j)_{j=1}^{\ssy J}$ 
are well-defined and
continuous on $[0,\tmax]\times[0,\taumax]$. Then, there exists a
nonnegative constant $C_{cn,1}$, independent of the
partitions of the intervals $[0,\tmax]$ and $[0,\taumax]$, such
that
\begin{equation}\label{DiscreteMC_0}
\expected\left[\,\left|\,\sum_{m=0}^n{\mathcal
K}_{m,\ell}\,\right|^{2\kappa}\,\right]\leq\,C_{cn,1}\,\left[\,(\Delta{t})^{\kappa}+(\Delta{\tau})^{2\kappa}\,\right]
\end{equation}
for $n=0,\dots,N-1$ and $\ell=0,\dots,L-1$.
In addition, for the {\rm (EFD)} method there exists a
nonnegative constant $C_{cn,2}$, independent of the
partitions of the intervals $[0,\tmax]$ and $[0,\taumax]$, such
that
\begin{equation}\label{DiscreteMC_000}
\expected\left[\,\left|\,\sum_{m=0}^n{\mathcal
K}_{m,\ell+1}-{\mathcal K}_{m,\ell}\,\right|^{2\kappa}\,\right]\leq\,C_{cn,2}
\,(\Delta\tau_{\ell})^{2\kappa}
\,\left[\,(\Delta{t})^{\kappa}+(\Delta{\tau})^{2\kappa}\,\right]
\end{equation}
for $n=0,\dots,N-1$ and $\ell=0,\dots,L-2$.
\end{proposition}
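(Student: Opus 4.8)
The plan is to identify the local truncation error $\mathcal K_{n,\ell}$ explicitly by comparing the exact increment $g(t_{n+1},\tau_\ell)-g(t_n,\tau_\ell)$, written via the integral form of \eqref{eq:1.5a}, with the Euler increment in \eqref{DiscreteMC_1}. Subtracting, one finds that $\mathcal K_{n,\ell}$ is a sum of a drift-type remainder and a sum of $J$ martingale-type remainders; for the {\rm(EFE)} method one additionally carries along the $L^2$-projection, so the same structure holds with $\widetilde\lambda$, $\lambda_j$ replaced by $\Pi\widetilde\lambda$, $\Pi\lambda_j$ and $g$ replaced by $\Pi g$. Schematically, writing $a(s):=\xi^2(g(s,s)+f_0(s))$ and $b_j(s):=\xi(g(s,s)+f_0(s))$,
\begin{equation*}
\mathcal K_{n,\ell}=\int_{t_n}^{t_{n+1}}\!\bigl[\widetilde\lambda(s,\tau_\ell)a(s)-\nu^{n,\ell}a(t_n)\bigr]ds
+\sum_{j=1}^{J}\int_{t_n}^{t_{n+1}}\!\bigl[\lambda_j(s,\tau_\ell)b_j(s)-\mu_j^{n,\ell}b_j(t_n)\bigr]dW^j(s).
\end{equation*}
First I would split each bracket into a ``frozen-coefficient'' piece, $(\widetilde\lambda(s,\tau_\ell)-\nu^{n,\ell})a(s)$ and $(\lambda_j(s,\tau_\ell)-\mu_j^{n,\ell})b_j(s)$, which is $O(\Delta t_n)$ pointwise by the assumed $C^1$-in-$t$ regularity of $\lambda,\widetilde\lambda$ (and, for {\rm(EFE)}, using that $\Pi$ is an $L^\infty$-bounded, order-one approximation so $\|\Pi\widetilde\lambda(t_n;\cdot)-\widetilde\lambda(s,\tau_\ell)\|\le C(\Delta t+\Delta\tau)$), and a ``frozen-solution'' piece, $\nu^{n,\ell}(a(s)-a(t_n))$ and $\mu_j^{n,\ell}(b_j(s)-b_j(t_n))$, which is controlled via the Lipschitz bounds \eqref{KSI_2} together with the temporal Hölder estimate \lemref{CONLM_2} for $g(s,s)-g(t_n,t_n)$.

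The second step is to assemble the telescoped sum $\sum_{m=0}^n\mathcal K_{m,\ell}$. The drift remainders accumulate additively and are handled by Hölder's inequality in the form $\expected[(\sum_m\int_{t_m}^{t_{m+1}}|r_m(s)|ds)^{2\kappa}]\le t_{\max}^{2\kappa-1}\sum_m\int_{t_m}^{t_{m+1}}\expected[|r_m(s)|^{2\kappa}]ds$, each integrand being bounded by $C[(\Delta t)^{2\kappa}+(\Delta\tau)^{2\kappa}] + C\Delta t^{\kappa}$-type terms after invoking \eqref{KSI_2}, \lemref{CONLM_2}, and the moment bounds \eqref{KSI_MaximusM_1}; note $(\Delta t)^{2\kappa}\le t_{\max}^\kappa(\Delta t)^\kappa$, so the drift part sits inside the claimed $(\Delta t)^\kappa$. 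The stochastic remainders form a sum of martingale increments; here I would use the Burkholder–Davis–Gundy (or the discrete-sum Itô-isometry/Rosenthal) inequality to bound $\expected[|\sum_m\int_{t_m}^{t_{m+1}}\rho_m^j(s)dW^j(s)|^{2\kappa}]\le C\,t_{\max}^{\kappa-1}\sum_m\int_{t_m}^{t_{m+1}}\expected[|\rho_m^j(s)|^{2\kappa}]ds$, with $\expected[|\rho_m^j(s)|^{2\kappa}]\le C[(\Delta t)^{2\kappa}+(\Delta\tau)^{2\kappa}+(\Delta t)^\kappa]$ again from the Lipschitz and Hölder-regularity estimates. Summing over $m$ contributes an extra $t_{\max}$ and one arrives at \eqref{DiscreteMC_0}.

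For the finite-difference estimate \eqref{DiscreteMC_000} I would run the same argument on the differenced quantity $\mathcal K_{m,\ell+1}-\mathcal K_{m,\ell}$. The point is that each coefficient difference now carries an extra factor of $\Delta\tau_\ell$: for the drift, $\widetilde\lambda(s,\tau_{\ell+1})a(s)-\widetilde\lambda(t_n,\tau_{\ell+1})a(t_n)-[\widetilde\lambda(s,\tau_\ell)a(s)-\widetilde\lambda(t_n,\tau_\ell)a(t_n)]$ is, after writing $\widetilde\lambda(\cdot,\tau_{\ell+1})-\widetilde\lambda(\cdot,\tau_\ell)=\int_{\tau_\ell}^{\tau_{\ell+1}}\partial_\tau\widetilde\lambda\,d\tau$, of size $\Delta\tau_\ell$ times a $C^1$-in-$t$ difference hence $O(\Delta\tau_\ell\,\Delta t_n)$, and similarly for the stochastic part using $\partial_{t\tau}\widetilde\lambda$, $\partial_{t\tau}\lambda_j$ continuous; the solution-difference pieces $a(s)-a(t_n)$ already appearing get multiplied by $\lambda_j(s,\tau_{\ell+1})-\lambda_j(s,\tau_\ell)=O(\Delta\tau_\ell)$. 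Factoring $(\Delta\tau_\ell)^{2\kappa}$ out of the $2\kappa$-th moment and repeating the BDG/Hölder accumulation gives \eqref{DiscreteMC_000}.

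The main obstacle, I expect, is the bookkeeping of the ``frozen-solution'' stochastic term: one must be careful that $a(s)-a(t_n)=\xi^2(g(s,s)+f_0(s))-\xi^2(g(t_n,t_n)+f_0(t_n))$ is estimated in $L^{2\kappa}$ via \eqref{KSI_2} and \lemref{CONLM_2} (giving only $|s-t_n|^{1/2}$, i.e.\ the $(\Delta t)^\kappa$ and not $(\Delta t)^{2\kappa}$ rate), and that this is consistent with pulling $\Delta\tau_\ell$ out cleanly in the differenced case — the half-order temporal rate is exactly why the statement has $(\Delta t)^\kappa$ rather than $(\Delta t)^{2\kappa}$, while the maturity discretization, being a quadrature-type error of the smooth $\tau$-dependence, retains the full $(\Delta\tau)^{2\kappa}$.
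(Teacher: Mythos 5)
Your proposal is correct and follows essentially the same route as the paper: the same splitting of the local truncation error into frozen-coefficient and frozen-solution remainders for both the drift and the stochastic integrals, the same use of H\"older and It\^o-moment inequalities to accumulate them, and the same mixed-derivative ($\partial_{t\tau}$) mechanism for extracting the extra factor $\Delta\tau_{\ell}$ in the differenced estimate. The one imprecision is that the scheme freezes the solution at ${\widehat g}_{n,\ell_n}=g(t_n,\tau_{\ell_n})$ rather than at $g(t_n,t_n)$, so the frozen-solution piece also requires the $\tau$-Lipschitz bound of Lemma~\ref{CONLM_1} (exactly as in the paper's estimate \eqref{DiscreteMC_77}); this only contributes an admissible $(\Delta\tau)^{2\kappa}$ term and does not affect the conclusion.
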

%
%
%
%
%
%
%
%
%
\begin{proof}
Here, we set $D_{\star}:=[0,\tmax]\times[0,\taumax]$ and use the
symbol $C$ for a generic constant independent of the partitions of
the intervals $[0,\tmax]$ and $[0,\taumax]$.
First, we observe that \eqref{eq:1.5a} yields that
\begin{equation}\label{DiscreteMC_2}
\begin{split}
{\widehat g}_{n+1,\ell}={\widehat
g}_{n,\ell}&\,+\int_{t_n}^{t_{n+1}}\nu^{\ell}(s)
\,\ksi^2(g(s,s)+f_{_0}(s))\;ds\\
&\,+\sum_{j=1}^{\ssy
J}\int_{t_n}^{t_{n+1}}\mu_j^{\ell}(s)\,\ksi(g(s,s)+f_{_0}(s))\;dW^j(s),\\
\end{split}
\end{equation}
for $n=0,\dots,N-1$ and $\ell=0,\dots,L-1$, where
$\nu^{\ell}(s)={\widetilde\lambda}(s,\tau_{\ell})$  and
$\mu_j^{\ell}(s)=\lambda_j(s,\tau_{\ell})$ for the {\rm (EFD)}
method and $\nu^{\ell}(s)=\Pi{\widetilde\lambda}(s;\tau_{\ell})$
and $\mu_j^{\ell}(s)=\Pi\lambda_j(s;\tau_{\ell})$ for the {\rm
(EFE)} method.
Then, subtracting \eqref{DiscreteMC_2} from \eqref{DiscreteMC_1}
we obtain
%
$\sum_{m=0}^{n}{\mathcal K}_{m,\ell}=\sum_{i=1}^4 E_{i,{\ssy
C}}^{n,\ell}$
%
for $\ell=0,\dots,L-1$ and $n=0,\dots,N-1$, where
\begin{equation*}
\begin{split}
E_{1,{\ssy C}}^{n,\ell}\equiv&\,\sum_{m=0}^{n}\int_{t_m}^{t_{m+1}}
(\nu^{\ell}(s)
-\nu^{m,\ell})\,\,\ksi^2(g(s,s)+f_{_0}(s))\;ds,\\
E_{2,{\ssy C}}^{n,\ell}\equiv&\,\sum_{m=0}^{n}\int_{t_m}^{t_{m+1}}
\nu^{m,\ell}\,\left[\,
\ksi^2(g(s,s)+f_{_0}(s))-\ksi^2({\widehat g}_{m,\ell_m}+f_{_0}^m)\,\right]\;ds,\\
E_{3,{\ssy C}}^{n,\ell}\equiv&\,\sum_{j=1}^{\ssy
J}\sum_{m=0}^{n}\int_{t_m}^{t_{m+1}}(\mu_j^{\ell}(s)-\mu_j^{n,\ell})
\,\,\ksi(g(s,s)+f_{_0}(s))\;dW^j(s),\\
E_{4,{\ssy C}}^{n,\ell}\equiv&\,\sum_{j=1}^{\ssy
J}\sum_{m=0}^{n}\int_{t_m}^{t_{m+1}}\mu_j^{n,\ell}\,\left[\,
\ksi(g(s,s)+f_{_0}(s))-\ksi({\widehat g}_{m,\ell_m}+f_{_0}^m)
\,\right]\;dW^j(s).\\
\end{split}
\end{equation*}
Next, using \eqref{KSI_1}, the H{\"o}lder inequality and
\eqref{KSI_MaximusM_1}, we obtain
\begin{equation}\label{DiscreteMC_4}
\begin{split}
\expected\left[(E_{1,{\ssy C}}^{n,\ell})^{2\kappa}\right]
\leq&\,(C_{\xi,1})^{2\kappa}\,(\Delta{t})^{2\kappa}\,\max_{\ssy
D_{\star}}|\partial_t{\widetilde\lambda}|^{2\kappa}\,\expected\left[\,\left(
\int_{0}^{\tmax}(1+|f_{_0}(s)|+|g(s,s)|)\;ds\right)^{2\kappa}\,\right]\\
\leq&\,C\,(\Delta{t})^{2\kappa}\,\expected\left[\,
\left(\int_0^{\tmax}(1+|f_{_0}(s)|\;ds\right)^{2\kappa}
+\left(\int_0^{\tmax}|g(s,s)|\;ds\right)^{2\kappa}\right]\\
\leq&\,C\,(\Delta{t})^{2\kappa}\,\left[\,
1+(\tmax)^{2\kappa}\,\max_{s\in[0,\tmax]}
\expected\left[\,(g(s,s))^{2\kappa}\,\right]\right]\\
\leq&\,C\,(\Delta{t})^{2\kappa}\,\left[\,
1+(\tmax)^{2\kappa}\,C^{\ssy M}_{\kappa,0}\,\right]\\
\end{split}
\end{equation}
and
\begin{equation}\label{DiscreteMC_6}
\begin{split}
\expected\left[\,(E^{n,\ell}_{3,{\ssy
C}})^{2\kappa}\,\right]\leq&\,C\,\sum_{j=1}^{\ssy
J}\,\expected\left[\,\sum_{m=0}^{n}\int_{t_m}^{t_{m+1}}
(\mu_j^{\ell}(s) -\mu_j^{m,\ell})
\,\ksi(g(s,s)+f_{_0}(s))\;dW^j(s)\,\right]^{2\kappa}\\
\leq&\,C\,\sum_{j=1}^{\ssy
J}\,\left[\,\sum_{m=0}^{n}\int_{t_m}^{t_{m+1}} (\mu_j^{\ell}(s)
-\mu_j^{m,\ell})^2
\,\expected\left[\,\ksi^2(g(s,s)+f_{_0}(s))\,\right]\;ds\,\right]^{\kappa}\\
\leq&\,C\,(\Delta{t})^{2\kappa}\,\left(\,\int_0^{\tmax}\left(\,1+|f_{_0}(s)|
+\expected\left[\,|g(s,s)|\,\right]\,\right)\;ds\,\right)^{\kappa}\\
\leq&\,C\,(\Delta
t)^{2\kappa}\,\left(\,\int_0^{\tmax}\left(\,2+|f_{_0}(s)|
+\expected\left[\,(g(s,s))^{2}\,\right]
\,\right)\;ds\,\right)^{\kappa}\\
\leq&\,C\,(\Delta
t)^{2\kappa}\,\left(\,\int_0^{\tmax}\left(\,2+|f_{_0}(s)| +C^{\ssy
M}_{1,0}
\,\right)\;ds\,\right)^{\kappa}\\
\end{split}
\end{equation}
Now, we apply \eqref{KSI_2} and the
H{\"o}lder inequality, to get
\begin{equation}\label{DiscreteMC_5}
\begin{split}
\expected\left[(E_{2,{\ssy
C}}^{n,\ell})^{2\kappa}\right]\leq&\,C\,\expected\left[\,
\left(\,\sum_{m=0}^{n}\int_{t_m}^{t_{m+1}}\left(|g(s,s)-{\widehat
g}_{m,\ell_m}|
+|f_{_0}(s)-f_{_0}^m|\right)\;ds\right)^{2\kappa}\,\right]\\
\leq&\,C\,\expected\left[\,
\left(\,\sum_{m=0}^{n}\int_{t_m}^{t_{m+1}}|f_{_0}(s)-f_{_0}^m|\;ds\right)^{2\kappa}
+\left(\,\sum_{m=0}^{n}\int_{t_m}^{t_{m+1}}|g(s,s)-{\widehat
g}_{m,\ell_m}|
\;ds\right)^{2\kappa}\right]\\
\leq&\,C\,\left[\,\sum_{m=0}^{n}\int_{t_m}^{t_{m+1}}
|f_{_0}(s)-f_{_0}^m|^{2\kappa}\;ds
+\sum_{m=0}^{n}\int_{t_m}^{t_{m+1}}\expected\left[
|g(s,s)-{\widehat g}_{m,\ell_m}|^{2\kappa}\,\right]\;ds\right]\\
\leq&\,C\,\left[\,(\Delta{t})^{2\kappa}
\,\max_{[0,\taumax]}|f_{_0}'|^{2\kappa}
+\,\sum_{m=0}^{n}\int_{t_m}^{t_{m+1}}\expected\left[
|g(s,s)-{\widehat g}_{m,\ell_m}|^{2\kappa}\,\right]\;ds\right]\\
\end{split}
\end{equation}
and
\begin{equation}\label{DiscreteMC_7}
\begin{split}
\expected\left[\,(E_{4,{\ssy C}}^{n,\ell})^{2\kappa}\,\right]
\leq&\,C\,\sum_{j=1}^{\ssy
J}\,\expected\left[\,\sum_{m=0}^{n}\int_{t_m}^{t_{m+1}}
\mu_j^{n,\ell}\,\left[\,\ksi(g(s,s)+f_{_0}(s))
-\ksi({\widehat g}_{m,\ell_m}+f_{_0}^m)\right]\;dW^j(s)\,\right]^{2\kappa}\\
\leq&\,C\,\left[\,\sum_{m=0}^{n}\int_{t_m}^{t_{m+1}}(\mu_j^{n,\ell})^2\,
\expected\left[\,\left(\ksi(g(s,s)+f_{_0}(s)) -\ksi({\widehat
g}_{m,\ell_m}+f_{_0}^m)\right)^2\,\right]
\;ds\,\right]^{\kappa}\\
\leq&\,C\,\left[\sum_{m=0}^{n}\int_{t_m}^{t_{m+1}}\,\left(\,
\expected\left[|g(s,s)-{\widehat g}_{m,\ell_m}|^2\,\right]
+|f_{_0}(s)-f_{_0}^m|^2\,\right)\;ds\right]^{\kappa}\\
\leq&\,C\,\left[(\Delta{t})^2\,\max_{[0,\taumax]}|f_{_0}'|^2
+\sum_{m=0}^{n}\int_{t_m}^{t_{m+1}}\,
\expected\left[\,|g(s,s)-{\widehat g}_{m,\ell_m}|^2\,\right]\right]^{\kappa}\\
\end{split}
\end{equation}
 Using \eqref{LipTau1}, \eqref{LipT1} and
\eqref{eq:A.4}, we have
\begin{equation}\label{DiscreteMC_77}
\begin{split}
\expected\left[\,|g(s,s)-{\widehat
g}_{m,\ell_m}|^{2\kappa}\,\right]\leq&\,C\,\Big(
\expected\left[\,|g(s,s)-g(s,\tau_{\ell_m})|^{2\kappa}\,\right]
+\expected\left[\,|g(s,\tau_{\ell_m})
-g(t_m,\tau_{\ell_m})|^{2\kappa}\,\right]\\
&\quad\quad+\expected\left[\,|g(t_m,\tau_{\ell_m})-{\widehat
g}_{m,\ell_m}|^{2\kappa}\,\right]\,\Big)\\
\leq&\,C\,\left(\,|s-\tau_{\ell_m}|^{2\kappa}+|s-t_m|^{\kappa}
+\expected\left[\,|g(t_m,\tau_{\ell_m})-{\widehat
g}_{m,\ell_m}|^{2\kappa}\,\right]\,\right)\\
\leq&\,C\,\left(|s-t_m|^{2\kappa}+|t_m-\tau_{\ell_m}|^{2\kappa}+(\Delta{t})^{\kappa}
+\expected\left[\,|g(t_m,\tau_{\ell_m})-{\widehat
g}_{m,\ell_m}|^{2\kappa}\,\right]\right)\\
\leq&\,C\,\left(|\tau_{\ell_m+1}-\tau_{\ell_m}|^{2\kappa}+(\Delta{t})^{\kappa}
+\expected\left[\,|g(t_m,\tau_{\ell_m})-{\widehat
g}_{m,\ell_m}|^{2\kappa}\,\right]\right)\\
\leq&\,C\,\left((\Delta\tau)^{2\kappa}+(\Delta{t})^{\kappa}
+\expected\left[\,|g(t_m,\tau_{\ell_m})-{\widehat
g}_{m,\ell_m}|^{2\kappa}\,\right]\right)\\
\end{split}
\end{equation}
for $s\in[t_m,t_{m+1}]$ and $m=0,\dots,N-1$. For the (EFE) method,
after using \eqref{LipTau1}, we have
\begin{equation}\label{DiscreteMC_78}
\begin{split}
\expected\left[\,|g(t_m,\tau_{\ell_m})-{\widehat
g}_{m,\ell_m}|^{2\kappa}\,\right]\leq&\,(\Delta\tau_{\ell_m})^{-1}\,
\int_{\tau_{\ell_m}}^{\tau_{\ell_m+1}}\expected\left[
\,(g(t_m,\tau_{\ell_m})-g(t_m,\tau))^{2\kappa}\,\right]\;d\tau\\
\leq&\,C\,(\Delta\tau_{\ell_m})^{-1}\,
\int_{\tau_{\ell_m}}^{\tau_{\ell_m+1}}|\tau_{\ell_m}-\tau|^{2\kappa}\;d\tau\\
\leq&\,C\,(\Delta\tau)^{2\kappa},\quad m=0,\dots,N-1,\\
\end{split}
\end{equation}
while for the (EFD) method the term we estimate above vanishes.
Finally, \eqref{DiscreteMC_77} and \eqref{DiscreteMC_78} yield
\begin{equation}\label{DiscreteMC_79}
\expected\left[\,|g(s,s)-{\widehat
g}_{m,\ell_m}|^{2\kappa}\,\right]\leq\,C\,\left[(\Delta
t)^{\kappa}+(\Delta\tau)^{2\kappa}\right]
\end{equation}
for $s\in[t_m,t_{m+1}]$ and $m=0,\dots,N-1$.
Observing that
$\expected\left[\left(\sum_{\ell=0}^{n}{\mathcal
K}_{m,\ell}\right)^{2\kappa}\right]\leq\,4^{2\kappa-1}
\sum_{i=1}^4\expected\left[(E^{n,\ell}_{i,{\ssy
C}})^{2\kappa}\right]$ for $\ell=0,\dots,L-1$ and $n=0,\dots,N-1$,
and that estimate \eqref{DiscreteMC_79} holds for $\kappa=1$, the
estimate \eqref{DiscreteMC_0} for the consistency error follows
easily in view of \eqref{DiscreteMC_4}, \eqref{DiscreteMC_6},
\eqref{DiscreteMC_5}, \eqref{DiscreteMC_7} and
\eqref{DiscreteMC_79}.
\par
Since, $\expected\left[\left(\sum_{\ell=0}^{n}{\mathcal
K}_{m,\ell+1}-{\mathcal
K}_{m,\ell+1}\right)^{2\kappa}\right]\leq\,4^{2\kappa-1}
\sum_{i=1}^4\expected\left[(E^{n,\ell+1}_{i,{\ssy
C}}-(E^{n,\ell}_{i,{\ssy
C}})^{2\kappa}\right]$
for $\ell=0,\dots,L-2$ and $n=0,\dots,N-1$, we 
obtain  \eqref{DiscreteMC_000} for the (EFD) nethod, observing that
\begin{equation}\label{aytes_oi_anisothtes}
\begin{split}
|y^{m,\ell+1}-y^{m,\ell}|\leq&\,C\,\Delta{\tau}_{\ell}\\
\left|(y^{\ell+1}(s)-y^{m,\ell+1})-(y^{\ell}(s)-y^{m,\ell})\right|
=&\,\left|\int_{t_m}^s\int_{\tau_{\ell}}^{\tau_{\ell+1}}
\partial_{t\tau}y(t',\tau')dt'd\tau'\right|\\
\leq&\,C\,\Delta{t}\,\Delta{\tau}_{\ell},\quad\forall\,s\in[t_m,t_{m+1}],
\end{split}
\end{equation}
where $y^{m,\ell}=\nu^{m,\ell}$ or $\mu_j^{n,\ell}$ and $y={\widetilde\lambda}$
or $\lambda_j$, respectively, and proceeding as above.
\end{proof}
%
%
%
%
%
%
%
%
%
\subsection{Error estimation}
%
%
%
%
In this section we derive an error estimate for the strong approximation error
${\mathcal G}(g)-{\overline{\mathcal G}}(\bbg)$ by splitting it as sum of
the strong  discretization error ${\mathcal G}(g)-{\mathcal G}(\bbg)$
which we estimate in Theorem~\ref{A_basic_theorem} and of  the strong
mumerical quadrature error ${\mathcal G}(\bbg)-{\overline{\mathcal G}}(\bbg)$
which we estimate in Theorem~\ref{Theorem_Q}.
%
%
%
\begin{thm}\label{A_basic_theorem}
Let $g$, $Y$ and $Z$ be the solution of
\eqref{eq:1.5a}--\eqref{eq:1.5b}, 
${\mathcal M}:=\{0,\dots,N\}\times\{0,\dots,L-2\}$
${\mathcal
J}:=\{0,\dots,N\}\times\{0,\dots,L-1\}$, ${\mathcal
I}:=\{0,\dots,N\}\times\{0,\dots,L+1\}$ and
$(\bbg_{n,\ell})_{(n,\ell)\in{\mathcal I}}$ be the
numerical approximations produced by the \text{\rm(EFD)} or the
\text{\rm (EFM)} method.
Also, we assume that the functions $\Psi'$, $F$, $F'$, $G$, $G'$, $U$, $U':{\mathbb
R}\rightarrow{\mathbb R}$ have polynomial growth, and we define
$\Lambda_{\ssy\Psi}(w):=\Lambda(\Psi(w+f_{_0}))$
for $w\in S_{\ssy\Delta\tau}$ or $w\in C([0,\taumax];{\mathbb R})$.
Then, there exist nonnegative constants $(C_{i}^{\ssy C\!V})_{i=1}^6$,
independent of the partitions of the
intervals $[0,\tmax]$ and $[0,\taumax]$, such that
\begin{equation}\label{DiscreteMEE_0}
\max_{(n,\ell)\in{\mathcal J}}\left(\expected\left[\,
\left|g(t_n,\tau_{\ell})-\bbg_{n,\ell}\right|^{2\kappa}
\,\right]\right)^{\frac{1}{2\kappa}}\leq\,C_1^{\ssy
C\!V}\,\big[\,(\Delta{t})^{\frac{1}{2}}+\Delta\tau\,\big],
\end{equation}
\begin{equation}\label{DiscreteMEE_00}
\max_{0\leq{n}\leq{\ssy N}}\left(\expected\left[\,|Y(t_n)-\bbg_{n,
\ssy L}|^{2\kappa}\,\right]\right)^{\frac{1}{2\kappa}}\leq\,C_2^{\ssy
C\!V}\,\big[\,(\Delta{t})^{\frac{1}{2}}
+\Delta\tau\,\big],
\end{equation}
\begin{equation}\label{DiscreteMEE_000}
\left(\expected\left[\,|Z(\tmax)-\bbg_{\ssy N,
L+1}|^{2\kappa}\,\right]\right)^{\frac{1}{2\kappa}}\leq\,C_3^{\ssy
C\!V}\,\big[\,(\Delta{t})^{\frac{1}{2}}
+\Delta\tau\,\big],
\end{equation}
\begin{equation}\label{DiscreteMEE_0000}
\left(\expected\left[\,\left|\,\Lambda_{\ssy\Psi}(g)-\Lambda_{\ssy\Psi}(\bbg)\,\right|^{2\kappa}
\,\right]\right)^{\frac{1}{2\kappa}}\leq\,C_4^{\ssy C\!V}\,\big[\,(\Delta{t})^{\frac{1}{2}}
+\Delta\tau\,\big],
\end{equation}
\begin{equation}\label{DiscreteMEE_Functional}
\left(\expected\left[\,|{\mathcal G}(g)-{\mathcal G}(\bbg)|^{2\kappa}
\,\right]\right)^{\frac{1}{2\kappa}}\leq\,C_5^{\ssy C\!V}\,\big[\,(\Delta{t})^{\frac{1}{2}}
+\Delta\tau\,\big]
\end{equation}
and, for the {\rm (EFD)} method, 
\begin{equation}\label{traviata_0}
\max_{(n,\ell)\in{\mathcal M}}\left(\expected\left[\,
\left|\tfrac{g(t_n,\tau_{\ell+1})-g(t_n,\tau_{\ell})}{\Delta\tau_{\ell}}
-\tfrac{\bbg_{n,\ell+1}-\bbg_{n,\ell}}{\Delta\tau_{\ell}}\right|^{2\kappa}
\,\right]\right)^{\frac{1}{2\kappa}}\leq\,C_6^{\ssy
C\!V}\,\big[\,(\Delta{t})^{\frac{1}{2}}+\Delta\tau\,\big].
\end{equation}
\end{thm}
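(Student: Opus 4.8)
The plan is to run a Gr\"onwall-type estimate for the error quantities $e_{n,\ell}:=g(t_n,\tau_\ell)-\bbg_{n,\ell}$, $E^Y_n:=Y(t_n)-\bbg_{n,\ssy L}$, $E^Z_n:=Z(\tmax)-\bbg_{\ssy N,\ssy L+1}$, all coupled through the short rate error $e_{n,\ell_n}$, and then to transfer the resulting bounds to the functional $\mathcal{G}$ and to the $\tau$-divided difference. First I would introduce the consistency residual $\mathcal{K}_{n,\ell}$ defined in Proposition~\ref{CONPRO_1}, so that $e_{n,\ell}$ satisfies the discrete recursion obtained by subtracting \eqref{eq:2.1b} (resp.\ \eqref{eq:2.2b}) from \eqref{DiscreteMC_1}. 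Telescoping gives $e_{n,\ell}=\sum_{m=0}^{n-1}\Delta t_m\,\nu^{m,\ell}\bigl[\ksi^2(g(t_m,t_m)+f_0^m)-\ksi^2(\bbg_{m,\ell_m}+f_0^m)\bigr]+\sum_{j,m}\mu_j^{m,\ell}\bigl[\ksi(\cdot)-\ksi(\cdot)\bigr]\Delta W_m^j+\sum_{m=0}^{n-1}\mathcal{K}_{m,\ell}$. Applying the discrete H\"older inequality to the drift sum, the discrete Burkholder--Davis--Gundy (or just the Gaussian-moment identity, as used in Lemma~\ref{AUXLM_3}) to the It\^o sum, and using the Lipschitz bound \eqref{KSI_2} on $\ksi,\ksi^2$, I obtain, after also using the moment bounds of Lemmas~\ref{AUXLM_2} and \ref{AUXLM_3} to control cross terms,
\[
\expected\bigl[|e_{n,\ell}|^{2\kappa}\bigr]\leq C\sum_{m=0}^{n-1}\Delta t_m\,\expected\bigl[|e_{m,\ell_m}|^{2\kappa}\bigr]+C\,\expected\Bigl[\Bigl|\sum_{m=0}^{n-1}\mathcal{K}_{m,\ell}\Bigr|^{2\kappa}\Bigr].
\]
By Proposition~\ref{CONPRO_1} the last term is $\leq C\bigl[(\Delta t)^\kappa+(\Delta\tau)^{2\kappa}\bigr]$. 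Setting $\ell=\ell_n$ and using the index compatibility \eqref{eq:A.4}, the right-hand side depends only on the diagonal quantities $\expected[|e_{m,\ell_m}|^{2\kappa}]$, so the discrete Gr\"onwall lemma (exactly as in the passage from \eqref{Jeddah2012a} to \eqref{DiscreteM_8}) yields $\max_m\expected[|e_{m,\ell_m}|^{2\kappa}]\leq C\bigl[(\Delta t)^\kappa+(\Delta\tau)^{2\kappa}\bigr]$; feeding this back into the recursion for general $\ell$ gives the same bound for $\max_{(n,\ell)\in\mathcal{J}}\expected[|e_{n,\ell}|^{2\kappa}]$, which is \eqref{DiscreteMEE_0} after taking $2\kappa$-th roots (since $[(\Delta t)^\kappa+(\Delta\tau)^{2\kappa}]^{1/(2\kappa)}\leq (\Delta t)^{1/2}+\Delta\tau$).

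Next I would handle \eqref{DiscreteMEE_00}--\eqref{DiscreteMEE_000}. Writing $Y(t_n)-\bbg_{n,\ssy L}=\sum_{m=0}^{n-1}\int_{t_m}^{t_{m+1}}\bigl(g(s,s)+f_0(s)-\bbg_{m,\ell_m}-f_0^m\bigr)\,ds$, I split the integrand into $g(s,s)-g(t_m,\tau_{\ell_m})$ (controlled by \eqref{DiscreteMC_79}), $g(t_m,\tau_{\ell_m})-\bbg_{m,\ell_m}$ (controlled by \eqref{DiscreteMEE_0} together with the $L^2$-projection estimate \eqref{DiscreteMC_78} for the (EFE) case), and $f_0(s)-f_0^m$ (of order $\Delta t$ since $f_0\in C^1$); H\"older in $s$ and $m$ then gives \eqref{DiscreteMEE_00}. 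For $Z$, the recursion involves $F(Y(t_m))\,U(f(t_m,t_m))-F(\bbg_{m,\ssy L})\,U(\bbg_{m,\ell_m}+f_0^m)$; I add and subtract $F(Y(t_m))\,U(\bbg_{m,\ell_m}+f_0^m)$, use the polynomial growth of $F',U'$ together with the moment bounds of Lemmas~\ref{AUXLM_14} and \ref{AUXLM_3} (and a Cauchy--Schwarz splitting into a high-moment bound times the error in $2\kappa'$-norm for suitable $\kappa'$) to convert the Lipschitz-on-bounded-sets estimates into genuine $L^{2\kappa}$ bounds, and conclude \eqref{DiscreteMEE_000}. Estimate \eqref{DiscreteMEE_0000} for $\Lambda_\Psi$ is analogous: $\Lambda_\Psi(g)-\Lambda_\Psi(\bbg)=\int_{\tau_a}^{\taumax}\bigl(\Psi(g(\tmax,\tau)+f_0(\tau))-\Psi(\bbg(\tmax,\tau)+f_0(\tau))\bigr)d\tau$, and since $\bbg(\tmax,\cdot)$ is piecewise constant, on $[\tau_\ell,\tau_{\ell+1})$ one compares with $\bbg_{\ssy N,\ell}$; the mean value theorem on $\Psi$ plus polynomial growth of $\Psi'$, plus the already-established \eqref{DiscreteMEE_0} and the $\tau$-Lipschitz bound \eqref{LipTau1} (to account for $g(\tmax,\tau)-g(\tmax,\tau_\ell)$), give the claim. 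Estimate \eqref{DiscreteMEE_Functional} follows immediately from \eqref{eq:1.3a}--\eqref{eq:1.3b}, \eqref{DiscreteMEE_00}, \eqref{DiscreteMEE_000}, \eqref{DiscreteMEE_0000}, the chain rule with polynomial growth of $F',G'$, and Lemmas~\ref{AUXLM_14}, \ref{AUXLM_3}.

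Finally, for the divided-difference estimate \eqref{traviata_0} (EFD only), I set $\delta_{n,\ell}:=\tfrac{e_{n,\ell+1}-e_{n,\ell}}{\Delta\tau_\ell}$ and repeat the telescoping/Gr\"onwall argument on the differenced recursion. The source term is now $\tfrac{1}{\Delta\tau_\ell}\bigl(\mathcal{K}_{m,\ell+1}-\mathcal{K}_{m,\ell}\bigr)$, whose $2\kappa$-th moment is controlled by \eqref{DiscreteMC_000} of Proposition~\ref{CONPRO_1}: $\expected[|\sum_m(\mathcal{K}_{m,\ell+1}-\mathcal{K}_{m,\ell})|^{2\kappa}]\leq C(\Delta\tau_\ell)^{2\kappa}[(\Delta t)^\kappa+(\Delta\tau)^{2\kappa}]$, so after dividing by $(\Delta\tau_\ell)^{2\kappa}$ the source is $O((\Delta t)^\kappa+(\Delta\tau)^{2\kappa})$ uniformly. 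The coefficient differences $\nu^{m,\ell+1}-\nu^{m,\ell}$ and $\mu_j^{m,\ell+1}-\mu_j^{m,\ell}$ are $O(\Delta\tau_\ell)$ by the $C^1$-regularity of $\tlambda,\lambda$, so after dividing by $\Delta\tau_\ell$ they are $O(1)$ and the resulting terms are handled with the Lipschitz bound on $\ksi,\ksi^2$ plus the moment bounds, reducing the recursion to $\expected[|\delta_{n,\ell}|^{2\kappa}]\leq C\sum_m\Delta t_m\,\expected[|\delta_{m,\ell_m}|^{2\kappa}]+C[(\Delta t)^\kappa+(\Delta\tau)^{2\kappa}]$ plus a term $C\sum_m\Delta t_m\,\expected[|e_{m,\ell_m}|^{2\kappa}]$ already bounded above. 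Discrete Gr\"onwall then gives \eqref{traviata_0}.

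\medskip
\noindent\emph{Main obstacle.} The delicate point is the circular coupling through the diagonal short-rate error $e_{m,\ell_m}$: every equation for $e_{n,\ell}$ (and for $\delta_{n,\ell}$) contains $e_{m,\ell_m}$ on the right, so one cannot close the estimate $\ell$ by $\ell$ independently. The resolution — and the reason hypotheses \eqref{eq:A.1}--\eqref{eq:A.4} are imposed — is to first restrict to $\ell=\ell_n$, which (by \eqref{eq:A.4}) makes the recursion self-contained in the single sequence $(\expected[|e_{m,\ell_m}|^{2\kappa}])_m$, apply discrete Gr\"onwall there, and only then propagate the bound to all $(n,\ell)$. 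A secondary technical nuisance is converting the merely locally-Lipschitz control coming from polynomial growth of $F',G',\Psi',U'$ into bona fide $L^{2\kappa}$ error bounds; this is done routinely by a Cauchy--Schwarz split into a high-moment factor (bounded by Lemmas~\ref{AUXLM_14} and \ref{AUXLM_3}) times an error factor in a higher $L^{p}$-norm, at the cost of invoking the lemmas for a larger value of $\kappa$.
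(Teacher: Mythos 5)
Your proposal is correct and follows essentially the same route as the paper: subtract the scheme from the consistency recursion of Proposition~\ref{CONPRO_1}, close a Gr\"onwall estimate on the diagonal sequence $\expected[|e_{m,\ell_m}|^{2\kappa}]$ (via the two-step $\kappa=1$ then $\kappa\ge2$ bootstrap you cite from Lemma~\ref{AUXLM_3}) and propagate to all $(n,\ell)$, then transfer to $Y$, $Z$, $\Lambda_{\ssy\Psi}$ and ${\mathcal G}$ by the same splittings and Cauchy--Schwarz/polynomial-growth arguments, and treat the divided difference via the differenced recursion with \eqref{DiscreteMC_000} and the $C^1$-regularity of the coefficients. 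The only presentational simplification is that your displayed recursion suppresses the $\bigl(\sum_m\Delta t_m\,\expected[|e_{m,\ell_m}|^2]\bigr)^{\kappa}$ term coming from the martingale part, but the bootstrap you invoke handles exactly that structure.
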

%
%
%
%
\begin{proof}
Here, we set $D_{\star}:=[0,\tmax]\times[0,\taumax]$ and will use
the symbol $C$ for a generic constant independent of the
partitions of the intervals $[0,\tmax]$ and $[0,\taumax]$.
Let $E_{m,\ell}={\widehat g}_{m,\ell}-\bbg_{m,\ell}$ for
$m=0,\dots,N$ and $\ell=0,\dots,L-1$, where ${\widehat
g}_{m,\ell}=g(t_m,\tau_{\ell})$ for the {\rm(EFD)} method and
${\widehat g}_{m,\ell}=\Pi g(t_m;\tau_{\ell})$ for the {\rm(EFE)}
method.
First, subtract \eqref{eq:2.1b} or \eqref{eq:2.2b} from
\eqref{DiscreteMC_1}, and then sum with respect to $n$, to obtain
\begin{equation*}\label{DiscreteMEE_1}
E_{n,\ell}=A_{n,\ell}+B_{n,\ell}+\sum_{m=0}^{n-1}{\mathcal
K}_{m,\ell},\quad \ell=0,\dots,L-1,\quad n=1,\dots,N,
\end{equation*}
where
\begin{equation*}
\begin{split}
A_{n,\ell}:=&\,\sum_{m=0}^{n-1}\Delta
t_m\,\nu^{m,\ell}\,\left[\,\ksi^2({\widehat
g}_{m,\ell_m}+f_{_0}^m)
-\ksi^2(\bbg_{m,\ell_m}+f_{_0}^m)\,\right],\\
B_{n,\ell}:=&\,\sum_{m=0}^{n-1}\sum_{j=1}^{\ssy
J}\,\mu^{m,\ell}_j\,\left[\, \ksi\big({\widehat
g}_{m,\ell_m}+f_{_0}^m\big)
-\ksi\big(\bbg_{m,\ell_m}+f_{_0}^m\big)\,\right]
\,\Delta{W_m^j},\\
\end{split}
\end{equation*}
$f_{_0}^m:=f_{_0}(t_m)$,
$\nu^{m,\ell}={\widetilde\lambda}(t_m,\tau_{\ell})$  and
$\mu_j^{m,\ell}=\lambda_j(t_m,\tau_{\ell})$ for the {\rm (EFD)}
method and $\nu^{m,\ell}=\Pi{\widetilde\lambda}(t_m;\tau_{\ell})$
and $\mu_j^{m,\ell}=\Pi\lambda_j(t_m;\tau_{\ell})$ for the {\rm
(EFE)} method. Thus, we have
\begin{equation}\label{DiscreteMEE_2}
\expected\left[\,|E_{n,\ell}|^{2\kappa}\,\right]=3^{2\kappa-1}
\,\left(\,\expected\left[\,|A_{n,\ell}|^{2\kappa}\,\right]
+\expected\left[\,|B_{n,\ell}|^{2\kappa}\,\right]
+\expected\left[\,\left|\,\sum_{m=1}^{n-1}{\mathcal
K}_{m,\ell}\,\right|^{2\kappa}\,\right]\,\right)
\end{equation}
for $\ell=0,\dots,L-1$ and $n=1,\dots,N$.
First, using \eqref{KSI_2} and the H{\"o}lder inequality, we
obtain
\begin{equation}\label{DiscreteMEE_3}
\begin{split}
\expected\left[\,|A_{n,\ell}|^{2\kappa}\,\right]\leq&\,(C_{\xi,2})^2\,\max_{\ssy
D_{\star}}|{\widetilde\lambda}|^{2\kappa}\,\expected\left[\,
\left(\sum_{m=0}^{n-1}\Delta{t_m}\,
|E_{m,\ell_m}|\right)^{2\kappa}\,\right]\\
\leq&\,C\,\left(\,\sum_{m=0}^{n-1}\Delta{t_m}\,\expected
\left[\,(E_{m,\ell_m})^{2\kappa}\,\right]\,\right)\\
\end{split}
\end{equation}
and
\begin{equation}\label{DiscreteMEE_4}
\begin{split}
\expected\left[\,|B_{n,\ell}|^{2\kappa}\,\right]\leq&\,C\,\sum_{j=1}^{\ssy
J}\left(\,\sum_{m=0}^{n-1}\Delta{t_m}\,(\mu_j^{m,\ell})^2
\,\expected\left[\,\left|\ksi({\widehat g}_{m,\ell_m}+f_{_0}^m)-
\ksi(\bbg_{m,\ell_m}+f_{_0}^m)\right|^2\,\right]\right)^{\kappa}\\
\leq&\,C\,\left(\,\sum_{m=0}^{n-1}\Delta{t_m}
\,\expected\left[\,\left|\ksi({\widehat g}_{m,\ell_m}+f_{_0}^m)-
\ksi(\bbg_{m,\ell_m}+f_{_0}^m)\right|^2\,\right]\right)^{\kappa}\\
\leq&\,C\,\left(\, \sum_{m=0}^{n-1}\,\Delta{t_m}\,\expected\left[
\,(E_{m,\ell_m})^2\,\right]\,\right)^{\kappa}\\
\end{split}
\end{equation}
for $\ell=0,\dots,L-1$ and $n=1,\dots,N$.
Combining, \eqref{DiscreteMEE_2}, \eqref{DiscreteMEE_3} and
\eqref{DiscreteMEE_4} and \eqref{DiscreteMC_0}, we have
\begin{equation}\label{DiscreteMEE_5}
\expected\left[\,|E_{n,\ell}|^{2\kappa}\right]\leq C\,\left[\,\left(
(\Delta{t})^{\kappa}+(\Delta\tau)^{2\kappa}\right)+
\sum_{m=0}^{n-1}\Delta{t_m}\,\expected\left[\,|E_{m,\ell_m}|^{2\kappa}\,\right]
+\left(\sum_{m=0}^{n-1}\Delta{t_m}
\,\expected\left[\,|E_{m,\ell_m}|^{2}
\,\right]\right)^{\kappa}\,\right]
\end{equation}
for $\ell=0,\dots,L-1$ and $n=1,\dots,N$.
Considering the case $\kappa=1$ and proceeding as in the proof of
Lemma~\ref{AUXLM_3}, from \eqref{DiscreteMEE_5} we arrive at the
estimate
\begin{equation}\label{DiscreteMEE_6}
\max_{0\leq{n}\leq{\ssy N}}\expected\left[\,
|E_{n,\ell_n}|^2\,\right]\leq\,C\,(\Delta{t}+(\Delta\tau)^2).
\end{equation}
Letting $\kappa\ge2$, under the view of \eqref{DiscreteMEE_6}, the
inequality \eqref{DiscreteMEE_5} yields
\begin{equation}\label{DiscreteMEE_7}
\expected\left[\,|E_{n,\ell}|^{2\kappa}\,\right]\leq C\,\left[\,\left(
(\Delta{t})^{\kappa}+(\Delta\tau)^{2\kappa}\right)
+\sum_{m=0}^{n-1}\Delta{t_m}\,
\expected\left[\,|E_{m,\ell_m}|^{2\kappa}\,\right]\right]
\end{equation}
for $\ell=0,\dots,L-1$ and $n=1,\dots,N$. Now, proceeding again as
in the proof of Lemma~\ref{AUXLM_3}, from \eqref{DiscreteMEE_7} we
conclude that
\begin{equation}\label{DiscreteMEE_8}
\max_{0\leq{n}\leq{\ssy N}}\expected\left[\,
|E_{n,\ell_n}|^{2\kappa}\,\right]
\leq\,C\,\left((\Delta{t})^{\kappa}+(\Delta\tau)^{2\kappa}\right).
\end{equation}
Thus, combining \eqref{DiscreteMEE_7} and \eqref{DiscreteMEE_8} we
arrive at
\begin{equation}\label{DiscreteMEE_9}
\max_{(n,\ell)\in{\mathcal
I}_{\ssy N,L}}\expected\left[\,|E_{n,\ell}|^{2\kappa}\right]\leq\,C\left(
(\Delta{t})^{\kappa}+(\Delta\tau)^{2\kappa}\right).
\end{equation}
The estimate \eqref{DiscreteMEE_0} for the (EFD) method follows
directly from \eqref{DiscreteMEE_9}. For the (EFE) method,
\eqref{DiscreteMEE_0} follows combining \eqref{DiscreteMEE_9} with
the following estimate (cf. \eqref{DiscreteMC_78})
\begin{equation*}
\begin{split}
\max_{(n,\ell)\in{\mathcal
I}_{\ssy N,L}}\expected\left[\,|g(t_n,\tau_{\ell})-\Pi
g(t_n;\tau_{\ell})|^{2\kappa}\,\right]\leq&\,\max_{(n,\ell)\in{\mathcal
I}}\hskip0.2truecm\max_{\tau\in[\tau_{\ell},\tau_{\ell+1}]}
\expected\left[\,|g(t_m,\tau)-g(t_m,\tau_{\ell})|^{2\kappa}\,\right]\\
\leq&\,C\,(\Delta\tau)^{2\kappa}.\\
\end{split}
\end{equation*}
Since 
$\tfrac{E_{n,\ell+1}-E_{n,\ell}}{\Delta\tau_{\ell}}=\tfrac{A_{n,\ell+1}-A_{n,\ell}}{\Delta\tau_{\ell}}
+\tfrac{B_{n,\ell+1}-B_{n,\ell}}{\Delta\tau_{\ell}}+\sum_{m=0}^{n-1}\tfrac{
{\mathcal K}_{m,\ell+1}-{\mathcal
K}_{m,\ell}}{\Delta\tau_{\ell}}$
for $\ell=0,\dots,L-2$ and $n=1,\dots,N$, to obtain the estimate
\eqref{traviata_0} for the (EFD) method we proceed as above using
\eqref{DiscreteMC_000} and \eqref{aytes_oi_anisothtes}.
\par
In order to get the second error estimate, we use \eqref{eq:1.3b} and \eqref{eq:2.1b} or
\eqref{eq:2.2b}, to conclude that
\begin{equation}\label{DiscreteMEE_55}
\expected\left[\,\left|Y(t_n)-\bbg_{n,L}\right|^{2\kappa}
\,\right]\leq\,C\,\sum_{i=1}^3\expected\left[
\,|\zeta_i^n|^{2\kappa}\,\right],\quad n=1,\dots,N,
\end{equation}
where
\begin{equation*}
\begin{gathered}
\zeta^n_1:=\sum_{m=0}^{\ssy
n-1}\int_{t_m}^{t_{m+1}}(f_{_0}(s)-f_{_0}(t_m))\;ds,\quad
\zeta^n_2:=\sum_{m=0}^{\ssy
n-1}\int_{t_m}^{t_{m+1}}(g(s,s)-g(t_m,\tau_{\ell_m}))\;ds,\\
\zeta^n_3:=\sum_{m=0}^{\ssy
n-1}\Delta{t_m}\,(g(t_m,\tau_{\ell_m})-\bbg_{m,\ell_m}).
\end{gathered}
\end{equation*}
First, we observe that
\begin{equation}\label{DiscreteMEE_56}
|\zeta_1^n|^{2\kappa}\leq\,C\,(\Delta{t})^{2\kappa}
\,\max_{[0,\tmax]}|f_{_0}'|^{2\kappa},
\quad n=1,\dots,N.
\end{equation}
Next, we use the H{\"o}lder inequality, \eqref{LipT1}, \eqref{LipTau1} and
\eqref{DiscreteMEE_0} to obtain
\begin{equation}\label{DiscreteMEE_57}
\begin{split}
\expected\left[\,|\zeta^n_2|^{2\kappa}\,\right]\leq&\,C\,\sum_{m=0}^{\ssy
n-1}\int_{t_m}^{t_{m+1}}\expected\left[
|g(s,s)-g(t_m,\tau_{\ell_m})|^{2\kappa}\right]\;ds\\
\leq&\,C\,\sum_{m=0}^{\ssy
n-1}\int_{t_m}^{t_{m+1}}\expected\left[|g(s,s)-g(t_m,s)|^{2\kappa}
+|g(t_m,s)-g(t_m,\tau_{\ell_m})|^{2\kappa}\right]\;ds\\
\leq&\,C\,\left[\,(\Delta{t})^{\kappa}+\sum_{m=0}^{\ssy
n-1}\int_{t_m}^{t_{m+1}}\left(|s-t_m|^{2\kappa}
+|t_m-\tau_{\ell_m}|^{2\kappa}\right)\;ds\right]\\
\leq&\,C\,\left[\,(\Delta{t})^{\kappa}+\sum_{m=0}^{\ssy
n-1}{\Delta{t_m}}\,|\tau_{\ell_m+1}
-\tau_{\ell_m}|^{2\kappa}\right]\\
\leq&\,C\,\left[\,(\Delta{t})^{\kappa}
+(\Delta\tau)^{2\kappa}\,\right],\quad n=1,\dots,N,
\end{split}
\end{equation}
and
\begin{equation}\label{DiscreteMEE_58}
\begin{split}
\expected\left[\,|\zeta^n_3|^{2\kappa}\right]\leq&\,C\,\sum_{m=0}^{\ssy
n-1}\Delta{t_m}\,\expected\left[|g(t_m,\tau_{\ell_m})
-\bbg_{m,\ell_m}|^{2\kappa}\right]\\
\leq&\,C\,\left[\,(\Delta{t})^{\kappa}+(\Delta\tau)^{2\kappa}\,\right],
\quad n=1,\dots,N.\\
\end{split}
\end{equation}
Thus, \eqref{DiscreteMEE_00} follows easily from
\eqref{DiscreteMEE_55}, \eqref{DiscreteMEE_56},
\eqref{DiscreteMEE_57} and \eqref{DiscreteMEE_58}.
\par
In order to prove our third error estimate, we use \eqref{eq:1.3b},
\eqref{eq:2.1b} or \eqref{eq:2.2b}, and the mean value theorem for
scalar fields, to conclude that
\begin{equation}\label{DiscreteMEE_55}
\expected\left[\,\left|Z(\tmax)-\bbg_{\ssy
N,L+1}\right|^{2\kappa}\,\right]\leq\,C\,\sum_{i=1}^3\expected\left[\,
|\Gamma_{i}|^{2\kappa}\,\right],
\end{equation}
where
\begin{equation*}
\begin{split}
\Gamma_1:=&\,\sum_{m=0}^{\ssy N-1}\int_{t_m}^{t_{m+1}}
F'(A_m(s))\,U(B_m(s))\,(Y(s)-\bbg_{m,{\ssy L}})\;ds,\\
\Gamma_2:=&\,\sum_{m=0}^{\ssy N-1}\int_{t_m}^{t_{m+1}}
F(A_m(s))\,U'(B_m(s))\,(g(s,s)-\bbg_{m,\ell_m})\;ds,\\
\Gamma_3:=&\,\sum_{m=0}^{\ssy N-1}\int_{t_m}^{t_{m+1}}
F(A_m(s))\,U'(B_m(s))\,(f_{_0}(s)-f_{_0}^m)\;ds,\\
\end{split}
\end{equation*}
and
\begin{equation*}
\begin{split}
A_m(s):=&\,\delta_{m}(s)\,\left(Y(s)-\bbg_{m,{\ssy L}}\right)+\,\bbg_{m,{\ssy L}},\\
B_m(s):=&\,{\tilde\delta}_{m}(s)\,\left(\,g(s,s)+f_{_0}(s)\right)
+(1-{\tilde\delta}_m(s))\,(\barrg_{m,\ell_m}+f_{_0}^m),\\
\end{split}
\end{equation*}
with  $\delta_{m}(s)$, ${\tilde\delta}_{m}(s)\in[0,1]$.
Let ${\tilde{m}}\in{\mathbb N}$. Since $F$, $F'$, $U$ and $U'$ have
polynomial growth, we use \eqref{DiscreteMEE_00}, \eqref{Moments_of_Functional},
\eqref{DiscreteM_0}
and \eqref{KSI_MaximusM_1}  to conclude that there exist
a nonnegative constant $C_{\star}^m$ such that
\begin{equation}\label{GlobalB_1}
\max_{0\leq{m}\leq{\ssy N-1}}\sup_{s\in(t_m,t_{m+1})}\left[\,\expected\left[
\,|F'(A_m(s))\,U(B_m(s))|^{2{\tilde{m}}}\,\right]
+\expected\left[\,|F(A_m(s))\,U'(B_m(s))|^{2{\tilde{m}}}\,\right]\,\right]
\leq\,C_{\star}^{{\tilde{m}}}.
\end{equation}
Also, we use the H{\"o}lder inequality and \eqref{KSI_MaximusM_1} to arrive at
\begin{equation}\label{Ymoments}
\begin{split}
\expected\left[\,|Y(t_b)-Y(t_a)|^{2{\tilde{m}}}\,\right]
\leq&\,(t_b-t_a)^{2{\tilde m}-1}\,\int_{t_a}^{t_b}
\expected\left[|g(s,s)+f_{_0}(s)|^{2{\tilde{m}}}
\,\right]\;ds\\
\leq&\,C\,(t_b-t_a)^{2{\tilde m}}\\
\end{split}
\end{equation}
for all $t_a$, $t_b\in[0,\tmax]$ with $t_a\leq t_b$.
Now, we are ready to estimare the quantities at the right hand side of
\eqref{DiscreteMEE_55}. First, we use the H{\"o}lder inequality and
\eqref{GlobalB_1} to arrive at
\begin{equation}\label{GammaB_1}
\begin{split}
\expected\left[\,|\Gamma_1|^{2\kappa}\,\right]
\leq&\,C\,
\sum_{m=0}^{\ssy N-1}\int_{t_m}^{t_{m+1}}
\expected\left[\,|F'(A_m(s))\,U(B_m(s))|^{2\kappa}
\,|Y(s)-\bbg_{m,{\ssy L}}|^{2\kappa}\,\right]\;ds,\\
\leq&\,C\,
\sum_{m=0}^{\ssy N-1}\int_{t_m}^{t_{m+1}}
\left(\expected\left[\,|F'(A_m(s))\,U(B_m(s))|^{4\kappa}\,\right]\right)^{\frac{1}{2}}\,
\left(\expected\left[\,|Y(s)-\bbg_{m,{\ssy L}}|^{4\kappa}\,\right]\right)^{\frac{1}{2}}\;ds,\\
\leq&\,C\,\sum_{m=0}^{\ssy N-1}\int_{t_m}^{t_{m+1}}
\left(\,\expected\left[\,|Y(s)-Y(t_m)|^{4\kappa}
+|Y(t_m)-\bbg_{m,{\ssy L}}|^{4\kappa}
\,\right]\,\right)^{\frac{1}{2}}\;ds,\\
\end{split}
\end{equation}
\begin{equation}\label{GammaB_2}
\begin{split}
\expected\left[\,|\Gamma_2|^{2\kappa}\,\right]
\leq&\,C\,
\sum_{m=0}^{\ssy N-1}\int_{t_m}^{t_{m+1}}
\left(\expected\left[\,|F(A_m(s))\,U'(B_m(s))|^{4\kappa}\,\right]\right)^{\frac{1}{2}}\,
\left(\expected\left[\,|g(s,s)-\bbg_{m,\ell_m}|^{4\kappa}\,\right]\right)^{\frac{1}{2}}\;ds\\
\leq&\,C\,
\sum_{m=0}^{\ssy N-1}\int_{t_m}^{t_{m+1}}
\left(\expected\left[\,|g(s,s)-g(t_m,\tau_{\ell_m})|^{4\kappa}
+|g(t_m,\tau_{\ell_m})-\bbg_{m,\ell_m}|^{4\kappa}\,\right]\right)^{\frac{1}{2}}\;ds,\\
\end{split}
\end{equation}
and
\begin{equation}\label{GammaB_3}
\expected\left[\,|\Gamma_3|^{2\kappa}\,\right]
\leq\,C\,\sum_{m=0}^{\ssy N-1}\int_{t_m}^{t_{m+1}}
|f_{_0}(s)-f_{_0}^m|^{2\kappa}\;ds.
\end{equation}
Next, we combining \eqref{GammaB_2},  \eqref{GammaB_3}, \eqref{DiscreteMC_79}
and \eqref{DiscreteMEE_0}  we obtain 
\begin{equation}\label{GGG1}
\expected\left[\,|\Gamma_2|^{2\kappa}\,\right]
+\expected\left[\,|\Gamma_3|^{2\kappa}\,\right]\leq\,C\,\left[\,(\Delta{t})^{\kappa}
+(\Delta{\tau})^{2\kappa}\,\right].
\end{equation}
Finally, we combine \eqref{GammaB_1}, \eqref{Ymoments} and \eqref{DiscreteMEE_00} to 
obtain
\begin{equation}\label{GGG2}
\expected\left[\,|\Gamma_1|^{2\kappa}\,\right]
\leq\,C\,\left[\,(\Delta{t})^{\kappa}
+(\Delta{\tau})^{2\kappa}\,\right].
\end{equation}
Thus, the error estimate \eqref{DiscreteMEE_000} is a simple consequence of \eqref{DiscreteMEE_55},
\eqref{GGG1} and \eqref{GGG2}.
\par
To derive our fourth error estimate, first we set 
$E:=\Lambda_{\ssy\Psi}(g)-\Lambda_{\ssy\Psi}(\bbg)$,
and then we use the H{\"o}lder inequality to obtain
\begin{equation}\label{GGG5}
\expected\left[\,|E|^{2\kappa}\right]
\leq\,C\,\sqrt{E_{\ssy A}}\,\sqrt{E_{\ssy B}},
\end{equation}
where
\begin{equation*}
\begin{split}
E_{\ssy A}:=&\sum_{\ell={\ell_a}}^{\ssy L-1}\int_{\tau_{\ell}}^{\tau_{\ell+1}}
\expected\left[\,\sup_{\epsilon\in[0,1]}\left|\Psi'\left(f_{_0}(\tau)+\epsilon\,g(\tmax,\tau)
+(1-\epsilon)\,\bbg_{{\ssy N},\ell}\right)\right|^{4\kappa}
\,\right]\;d\tau,\\
E_{\ssy B}:=&\sum_{\ell={\ell_a}}^{\ssy L-1}\int_{\tau_{\ell}}^{\tau_{\ell+1}}
\,\expected\left[|g(\tmax,\tau)
-\bbg_{{\ssy N},\ell}|^{4\kappa}\right]\;d\tau.\\
\end{split}
\end{equation*}
Since $\Psi'$ has polynomial growth, the use
of \eqref{DiscreteM_0} and \eqref {KSI_MaximusM_1} yields that
\begin{equation}\label{GGG6}
E_{\ssy A}\leq\,C.
\end{equation}
Also, using \eqref{LipTau1} and \eqref{DiscreteMEE_0}  we obtain
\begin{equation}\label{GGG7}
E_{\ssy B}\leq\,C\,\left[\,(\Delta{t})^{2\kappa}+(\Delta\tau)^{4\kappa}\,\right].
\end{equation}
Thus, the estimate \eqref{DiscreteMEE_0000} follows after combining \eqref{GGG5}, \eqref{GGG6}
and \eqref{GGG7}.
\par
To obtain our fifth error estimate, first we set $E_{\ssy{\mathcal G}}:={\mathcal G}(g)-{\mathcal G}(\bbg)$ and
use the error bound \eqref{DiscreteMEE_000} to obtain
\begin{equation}\label{FunFun_1}
\expected\left[\,|E_{\ssy{\mathcal G}}|^{2\kappa}\,\right]
\leq\,C\,\left[\,\sqrt{{\mathcal G}_{\ssy A_1}}\,\sqrt{{\mathcal G}_{\ssy A_2}}
+\sqrt{{\mathcal G}_{\ssy B_1}}\,\sqrt{{\mathcal G}_{\ssy B_2}}
+(\Delta{t})^{\kappa}+(\Delta{\tau})^{2\kappa}\,\right],
\end{equation}
where
\begin{equation*}
\begin{gathered}
{\mathcal G}_{\ssy A_1}:=\expected\left[\,\big|G(\Lambda_{\ssy\Psi}(g))\big|^{4\kappa}\,\right],
\quad
{\mathcal G}_{\ssy A_2}:=\expected\left[\,\big|F(Y(\tmax))-F(\bbg_{{\ssy N},{\ssy L}})|^{4\kappa}\,\right]\\
{\mathcal G}_{\ssy B_1}:=\expected\left[\,\big|F(\bbg_{{\ssy N},{\ssy L}})\big|^{4\kappa}\,\right],
\quad
{\mathcal G}_{\ssy B_2}:=\expected\left[\,\big|G(\Lambda_{\ssy\Psi}(g))
-G(\Lambda_{\ssy \Psi}(\bbg))|^{4\kappa}\,\right].\\
\end{gathered}
\end{equation*}
Since $F$ and $G$ have polynomial growth, we combine \eqref{Moments_of_Functional}
and \eqref{DiscreteMEE_00} to get
\begin{equation}\label{FunFun_2}
{\mathcal G}_{\ssy A_1}+{\mathcal G}_{\ssy B_1}\leq\,C.
\end{equation}
Since $F'$ has polynomial growth, we use the mean value theorem, the Cauchy-Schwarz inequality,
\eqref{Moments_of_Functional} and the error bound \eqref{DiscreteMEE_00} to have
\begin{equation}\label{FunFun_3}
\begin{split}
{\mathcal G}_{\ssy A_2}\leq&\,
\left(\expected\left[\,\max_{\epsilon\in[0,1]}\big|F'(\epsilon\,Y(\tmax)
+(1-\epsilon)\,\bbg_{{\ssy N},{\ssy L}})\big|^{8\kappa}\,\right]\right)^{\frac{1}{2}}
\,\left(\expected\left[\,|Y(\tmax)-\bbg_{{\ssy N},{\ssy L}}|^{8\kappa}\right]\right)^{\frac{1}{2}}\\
\leq&\,C\,\left[(\Delta{t})^{2\kappa}+(\Delta\tau)^{4\kappa}\right].\\
\end{split}
\end{equation}
Similarly, since $G'$ has polynomial growth, we use the mean value theorem, the Cauchy-Schwarz inequality,
\eqref{Moments_of_Functional}, and the error bound \eqref{DiscreteMEE_0000} to have
\begin{equation}\label{FunFun_4}
\begin{split}
{\mathcal G}_{\ssy B_2}\leq&\,
\left(\expected\left[\,\max_{\epsilon\in[0,1]}\big|G'\left(\epsilon\,\Lambda_{\ssy\Psi}(g)
+(1-\epsilon)\,\Lambda_{\ssy\Psi}(\bbg)\right)\big|^{8\kappa}\,\right]\right)^{\frac{1}{2}}
\,\left(\expected\left[\,|\Lambda_{\ssy\Psi}(g)-\Lambda_{\ssy\Psi}(\bbg)|^{8\kappa}
\right]\right)^{\frac{1}{2}}\\
\leq&\,C\,\left[(\Delta{t})^{2\kappa}+(\Delta\tau)^{4\kappa}\right].\\
\end{split}
\end{equation}
Thus, the error bound \eqref{DiscreteMEE_Functional} is a simple consequence of
\eqref{FunFun_1}, \eqref{FunFun_2}, \eqref{FunFun_3} and \eqref{FunFun_4}.
\end{proof}
%
%
%
%
%
%
\begin{thm}\label{Theorem_Q}
Let ${\mathcal I}:=\{0,\dots,N\}\times\{0,\dots,L+1\}$,
$(\bbg_{n,\ell})_{(n,\ell)\in{\mathcal I}}$ be the
numerical approximations produced by the \text{\rm(EFD)} or the
\text{\rm (EFM)} method, $\Lambda_{\ssy\Psi}(\bbg)$  be defined as in the Theorem~\ref{A_basic_theorem}
and ${\overline\Lambda}_{\ssy\Psi,Q}(\bbg)$ be the quantity defined by \eqref{eq:2.3a}.
We assume that the quadrature rule $Q$ used in \eqref{eq:2.3a} is of order
$p_{_{\ssy Q}}$, $\Psi\in C^{p_{_Q}}(\rset;\rset)$ 
and $f_{_0}\in C^{p_{_Q}}([0,\taumax];{\mathbb R})$.
Also, we assume that $\Psi$ and all its derivatives up to order $p_{\ssy Q}$,
along with the functions $F$ and $G'$, have polynomial growth.
Then, for $\kappa\in{\mathbb N}$, there exist constants
$C^{\ssy Q, A}_{\kappa}$ and $C^{\ssy Q,B}_{\kappa}$,
independent of the partitions of the intervals $[0,\tmax]$
and $[0,\taumax]$, such that  
\begin{equation}\label{QuadE_0}
\left(\expected\left[\,\left|\Lambda_{\ssy\Psi}(\bbg)
-{\overline\Lambda}_{\ssy\Psi,Q}(\bbg)\right|^{2\kappa}\right]
\right)^{\frac{1}{2\kappa}}\leq\,C_{\kappa}^{\ssy Q, A}
\,(\Delta\tau)^{\,p_{_Q}}
\end{equation}
and
\begin{equation}\label{QuadE_11}
\left(\expected\left[\,\left|{\mathcal G}(\bbg)-{\overline{\mathcal G}}(\bbg)
\right|^{2\kappa}\right]\right)^{\frac{1}{2\kappa}}\leq\,C_{\kappa}^{\ssy Q, B}
\,(\Delta\tau)^{p_{_{Q}}}.
\end{equation}
\end{thm}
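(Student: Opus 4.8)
The plan is to derive \eqref{QuadE_0} from the elementary error theory of composite quadrature and then to obtain \eqref{QuadE_11} from it by a mean value argument. For \eqref{QuadE_0}, fix a realization of $\bbg$. Since $\bbg(\tmax,\cdot)$ is piecewise constant on the $\tau$-partition, equal to $\bbg_{N,\ell}$ on $[\tau_\ell,\tau_{\ell+1})$, on each such subinterval the integrand in $\Lambda_{\ssy\Psi}(\bbg)$ coincides with $\psi_\ell(\tau):=\Psi(\bbg_{N,\ell}+f_{_0}(\tau))$, and the hypotheses $\Psi\in C^{p_{_Q}}(\rset)$, $f_{_0}\in C^{p_{_Q}}([0,\taumax])$ give $\psi_\ell\in C^{p_{_Q}}$. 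Using \eqref{eq:2.3a}, the quadrature error splits as a sum over $\ell=\ell_a,\dots,L-1$ of the subinterval errors $\Delta\tau_\ell\bigl(\int_0^1\psi_\ell(\tau_\ell+x\Delta\tau_\ell)\,dx-Q(\psi_\ell(\tau_\ell+\cdot\,\Delta\tau_\ell))\bigr)$; since $Q$ is exact on polynomials of degree $\le p_{_Q}-1$, the Peano kernel estimate on $[0,1]$ applied to $x\mapsto\psi_\ell(\tau_\ell+x\Delta\tau_\ell)$, whose $p_{_Q}$-th derivative equals $(\Delta\tau_\ell)^{p_{_Q}}\psi_\ell^{(p_{_Q})}(\tau_\ell+x\Delta\tau_\ell)$, yields
\[
\bigl|\Lambda_{\ssy\Psi}(\bbg)-{\overline\Lambda}_{\ssy\Psi,Q}(\bbg)\bigr|\le C\sum_{\ell=\ell_a}^{L-1}(\Delta\tau_\ell)^{p_{_Q}+1}\,\max_{\tau\in[\tau_\ell,\tau_{\ell+1}]}\bigl|\psi_\ell^{(p_{_Q})}(\tau)\bigr|.
\]

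Next I would control $\psi_\ell^{(p_{_Q})}$ by the Fa\`a di Bruno formula: $\psi_\ell^{(p_{_Q})}$ is a finite sum of terms, each the product of one factor $\Psi^{(j)}(\bbg_{N,\ell}+f_{_0}(\tau))$ with $1\le j\le p_{_Q}$ and of derivatives of $f_{_0}$ whose orders add up to $p_{_Q}$. Because $\Psi$ and all its derivatives up to order $p_{_Q}$ have polynomial growth and $f_{_0}\in C^{p_{_Q}}([0,\taumax])$ is bounded together with its derivatives, there is an integer $q=q(\Psi,p_{_Q})$ and a constant $C$, independent of the partitions, with $\max_{[\tau_\ell,\tau_{\ell+1}]}|\psi_\ell^{(p_{_Q})}|\le C(1+|\bbg_{N,\ell}|^{q})$. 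Extracting one factor $\Delta\tau_\ell$ from $(\Delta\tau_\ell)^{p_{_Q}+1}$ and bounding the remaining $(\Delta\tau_\ell)^{p_{_Q}}$ by $(\Delta\tau)^{p_{_Q}}$ gives the pathwise estimate $\bigl|\Lambda_{\ssy\Psi}(\bbg)-{\overline\Lambda}_{\ssy\Psi,Q}(\bbg)\bigr|\le C(\Delta\tau)^{p_{_Q}}\sum_{\ell}\Delta\tau_\ell(1+|\bbg_{N,\ell}|^{q})$. Taking $L^{2\kappa}(\Omega)$ norms, using Minkowski's inequality for the $\ell$-sum, Lemma~\ref{AUXLM_3} (with $\kappa$ there replaced by $\kappa q$) to bound $\expected[|\bbg_{N,\ell}|^{2\kappa q}]$ uniformly in $\ell$, and $\sum_\ell\Delta\tau_\ell=\taumax-\tau_a$, one arrives at \eqref{QuadE_0}.

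For \eqref{QuadE_11}, note from \eqref{eq:2.4a} and the definition of $\mcG(\bbg)$ that the two functionals differ only through the replacement of $\Lambda_{\ssy\Psi}(\bbg)$ by ${\overline\Lambda}_{\ssy\Psi,Q}(\bbg)$ inside $G$ (the $\bbg_{N,L}$ and $\bbg_{N,L+1}$ contributions cancel), so the mean value theorem gives
\[
\mcG(\bbg)-{\overline{\mcG}}(\bbg)=F(\bbg_{N,L})\,G'(\xi)\,\bigl[\Lambda_{\ssy\Psi}(\bbg)-{\overline\Lambda}_{\ssy\Psi,Q}(\bbg)\bigr]
\]
for some random $\xi$ between $\Lambda_{\ssy\Psi}(\bbg)$ and ${\overline\Lambda}_{\ssy\Psi,Q}(\bbg)$. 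I would then apply H\"older's inequality with three exponents equal to $6\kappa$: the factor $\expected[|F(\bbg_{N,L})|^{6\kappa}]$ is bounded because $F$ has polynomial growth and $\bbg_{N,L}=\sum_m\Delta t_m(\bbg_{m,\ell_m}+f_{_0}(t_m))$ has bounded moments by Lemma~\ref{AUXLM_3}; the factor $\expected[|G'(\xi)|^{6\kappa}]$ is bounded because $G'$ has polynomial growth and $|\xi|\le|\Lambda_{\ssy\Psi}(\bbg)|+|{\overline\Lambda}_{\ssy\Psi,Q}(\bbg)|$, whose moments are controlled by the polynomial growth of $\Psi$, the boundedness of $f_{_0}$ and Lemma~\ref{AUXLM_3}; and the last factor is $O((\Delta\tau)^{p_{_Q}})$ by \eqref{QuadE_0} applied with $\kappa$ replaced by $3\kappa$. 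Combining these estimates gives \eqref{QuadE_11}.

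The affine rescaling of the composite rule and the H\"older bookkeeping for \eqref{QuadE_11} are routine. The one step that deserves to be written out with care is the estimate of $\psi_\ell^{(p_{_Q})}$: one must verify both that in each Fa\`a di Bruno term the $f_{_0}$-factors carry a total of exactly $p_{_Q}$ derivatives (so that precisely $(\Delta\tau_\ell)^{p_{_Q}}$ is freed by the scaling, and no extra negative power of $\Delta\tau_\ell$ appears), and that the polynomial-growth hypotheses on $\Psi,\Psi',\dots,\Psi^{(p_{_Q})}$ are enough to absorb the dependence on $\bbg_{N,\ell}$ into a single polynomial moment that Lemma~\ref{AUXLM_3} controls uniformly in $\ell$.
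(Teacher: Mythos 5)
Your proposal is correct and follows essentially the same route as the paper: a scaled Peano-kernel/Taylor estimate for the composite rule on each $\tau$-subinterval (exploiting that $\bbg(\tmax,\cdot)$ is piecewise constant so only $f_{_0}$ contributes to the quadrature error), a chain-rule bound on $\partial_\tau^{p_{_Q}}\Psi(\bbg_{{\ssy N},\ell}+f_{_0}(\cdot))$ absorbed by the polynomial-growth hypotheses and the discrete moment bounds of Lemma~\ref{AUXLM_3}, and then the mean value theorem plus H\"older for \eqref{QuadE_11}. The only cosmetic differences are that you invoke Fa\`a di Bruno explicitly (which is indeed the correct form of the derivative of the composition, stated more carefully than in the paper's proof) and use a single three-factor H\"older inequality where the paper applies Cauchy--Schwarz twice.
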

%
%
%
\begin{proof}
For $\ell=\ell_a,\dots,L-1$, we set
$v_{\ell}(s):=\Psi(\bbg_{{\ssy N},\ell}+f_{_0}(\tau_{\ell}
+s\,\Delta{\tau}_{\ell}))$ for $s\in[0,1]$.  Since the quadrature rule $Q$ has
order $p_{_Q}$, applying a standard argument from the error analysis
for quadrature rules based on the Taylor formula (see, e.g., \cite{DB}), we obtain
\begin{equation}\label{QuadE_1}
\expected\left[\,\left|\Lambda_{\ssy\Psi}(\bbg)
-{\overline\Lambda}_{\ssy\Psi,Q}(\bbg)\right|^{2\kappa}\,\right]
\leq\,C\,(\Delta\tau)^{2\kappa\,p_{_Q}}\,
\expected\left[\,\max_{\ell_a\leq{\ell}\leq{\ssy L-1}}
\max_{[0,1]}\left|\partial_{s}^{p_{_Q}}v_{\ell}\right|^{2\kappa}\,\right].
\end{equation}
Observing that $\partial_s^{p_{_Q}}v_{\ell}(s)=\sum_{j=0}^{\ssy p_{_Q}}
{p_{_Q} \choose j}\,\Psi^{(j)}(\bbg_{{\ssy N},\ell}+f_{_0}(\tau_{\ell}+s\,\Delta{\tau}_{\ell}))
\,\,f_{_0}^{(p_{_Q}-j)}(\tau_{\ell}+s\,\Delta{\tau}_{\ell}))$, assuming that $\Psi^{(j)}$
has polynomial growth $p_j$ for $j=1,\dots,p_{_Q}$, and using \eqref{DiscreteM_0}, we obtain
\begin{equation}\label{QuadE_2}
\begin{split}
\expected\left[\,\max_{\ell_a\leq{\ell}\leq{\ssy L-1}}\sup_{[0,1]}
\left|\partial_s^{p_{_Q}}v_{\ell}\right|^{2\kappa}\,\right]\leq&\,C\,
\max_{\ell_a\leq{\ell}\leq{\ssy L-1}}\sum_{j=0}^{p_{_Q}}
\left(1+|\bbg_{{\ssy N},\ell}|^{2\kappa p_j}\right)\\
\leq&\,C.\\
\end{split}
\end{equation}
Now, combine \eqref{QuadE_1} and \eqref{QuadE_2} to arrive at \eqref{QuadE_0}. 
\par
Since $F$ and $G'$ have polynomial growth, using the Cauchy-Schwarz inequality,
the mean value theorem, \eqref{DiscreteMEE_00}, \eqref{Moments_of_Functional},
\eqref{QuadE_0} and \eqref{DiscreteMEE_0000}, we obtain
\begin{equation*}
\begin{split}
\expected\left[\,\big|{\mathcal G}(\bbg)-{\overline {\mathcal G}}(\bbg)\big|^{2\kappa}\,\right]
\leq&\,C\,\left(\expected\left[\,\big|F(\bbg_{\ssy N, L})\big|^{4\kappa}\,\right]\right)^{\frac{1}{2}}
\,\left(\expected\left[\,\big|G(\Lambda_{\ssy\Psi}(\bbg))-G({\overline\Lambda}_{\ssy\Psi,Q}(\bbg))\big|^{4\kappa}\right]\right)^{\frac{1}{2}}\\
\leq&\,C\,\left(\expected\left[\,
\sup_{\epsilon\in[0,1]}\big|G'\left(\,\epsilon\,\Lambda_{\ssy\Psi}(\bbg)+(1-\epsilon)\,
{\overline\Lambda}_{\ssy \Psi,Q}(\bbg)\,\right)\big|^{8\kappa}\right]\right)^{\frac{1}{4}}
\,\left(\expected\left[\,\big|\Lambda_{\ssy\Psi}(\bbg)
-{\overline\Lambda}_{\ssy\Psi,Q}(\bbg)\big|^{8\kappa}\right]
\right)^{\frac{1}{4}} \\
\leq&\,C\,(\Delta\tau)^{2\kappa p_{_Q}}
\end{split}
\end{equation*}
which yields the estimate \eqref{QuadE_11}.
\end{proof}
%
%
%
%
%
%
%
%
%
\section{Computable Weak Error Approximation}\label{sec:3}
In this section we present a computable approximation
for the weak $t-$ and $\tau-$ discretization error $E_{\ssy D}$
defined in \eqref{eq:2.5a} for the (EFD) method.
%
%
%
%
%
In Theorem \ref{thm:3.1} below we give an estimate of $E_{\ssy D}$
which, as the step size of both the time and maturity time partitions go
to zero and the number of realizations goes to infinity,
is asymptotically correct.
%
%
On the other hand, the statistical error
$E_{\ssy S}$ can be analyzed by the Central Limit Theorem or
Berry-Esseen Theorem, a standard procedure in Monte Carlo methods
(cf. Section \ref{sec:4}).
While, in Theorem~\ref{Theorem_Q} we have estimated the quadrature
error $E_{\ssy Q}$, concluding that when the order $p_{\ssy Q}$ of the
quadrature rule $Q$ we use in \eqref{eq:2.3a} is sufficiently
large, the quadrature error, $E_{\ssy Q}$, is a higher order term
in the expansion of the computational error. 
%
%
%
%
%
\par
To have an easier access to the results and the techniques of \cite{STZ},
we reformulate problem \eqref{eq:1.5a}-\eqref{eq:1.5b}, letting
the process $g=g(t,\tau)$ be the solution of the problem
\begin{equation}\label{eq:g_proc_def}
\begin{split}
dg(t,\tau)=&\,a(t,\tau,g(t,t))\;dt
+b(t,\tau,g(t,t)){\cdot}dW(t),\quad\forall\,t\in[0,\tmax],\\
g(0,\tau)=&\,0,\\
\end{split}
\end{equation}
for $\tau\in[0,\taumax]$, where
$a:[0,\tmax]\times[0,\taumax]\times\rset\rightarrow\rset$,
$b:[0,\tmax]\times[0,\taumax]\times\rset\rightarrow\rset^{\ssy J}$
given by
\begin{equation*}
\begin{split}
a(t,\tau,x)\equiv&\,\ksi^2(x+f_{_0}(t))\,\tlambda(t,\tau),\\
b(t,\tau,x)\equiv&\,\ksi(x+f_{_0}(t))\,\lambda(t,\tau).\\
\end{split}
\end{equation*}
%
%
%
%
We approximate the unknown process $g(t,\tau)$ by a time and
maturity discretization $\barrg(t,\tau)$,  with
$t\in(t_n)_{n=0}^{\ssy N}$ and
$\tau\in(\tau_{\ell})_{\ell=0}^{\ssy L-1}$, based on the (EFD)
method, which, for $n=0,\dots,N-1$, reads
\begin{equation}\label{eq:3.1}
\aligned \barrg(t_{n+1},\tau_\ell)=&\,\barrg(t_n,\tau_\ell)
+a(t_n,\tau_\ell,\barrg(t_n,\tau_{\ell_n}))\,\Delta t_n+
b(t_n,\tau_\ell,\barrg(t_n,\tau_{\ell_n})){\cdot}\Delta W_n,
\quad\ell=0,\dots,L-1,\\
%
\barrg(0,\tau_\ell)=&\,0,\quad \ell=0,\dots,L.
\endaligned
\end{equation}
For the analysis of the (EFD) method, it is useful to extend its
definition for all times $t$ and all maturities $\tau$ as follows:
for $n=0,\dots,N-1$ and $\ell=0,\dots,L-1$, set
\begin{equation}\label{eq:3.2}
\begin{split}
\barrg(t,\tau)=&\,\barrg(t_n,\tau_{\ell})+
a(t_n,\tau_\ell,\barrg(t_n,\tau_{\ell_n}))(t-{t_n})+
b(t_n,\tau_\ell,\barrg(t_n,\tau_{\ell_n}))\cdot (W(t)-W(t_n))\\
=&\,\barrg(t_n,\tau_{\ell})+\int_{t_n}^t
\barra(s,\tau,\barrg(t_n,\tau_{\ell_n}))ds+
\int_{t_n}^t\barrb(s,\tau,\barrg(t_n,\tau_{\ell_n})){\cdot}dW(s),
\quad\forall\,t\in[t_n,t_{n+1}),\\
\barrg(0,\tau)=&\,0,
%
%
\end{split}
\end{equation}
for $\tau\in[\tau_{\ell},\tau_{\ell+1})$, where $\barra$ and
$\barrb$ are the piecewise constant approximations
\begin{equation}\label{eq:3.3a}
\begin{split}
\barra(t,\tau,x)\left|_{(t,\tau)\in[t_n,t_{n+1})
\times[\tau_{\ell},\tau_{\ell+1})}\right.
\equiv&\,a(t_n,\tau_{\ell},x)=
\ksi^2(x+f_0(t_n))\kenn\tlambda(t_n,\tau_{\ell}),\\
\barrb(t,\tau,x)\left|_{(t,\tau)\in[t_n,t_{n+1})
\times[\tau_{\ell},\tau_{\ell+1})}\right.\equiv&\,b(t_n,\tau_\ell,x)=
\ksi(x+f_0(t_n))\,\lambda(t_n,\tau_{\ell}).\\
\end{split}
\end{equation}
Thus, the extension above results in $\barrg(t,.)\in
S_{\ssy\Delta\tau}$ for any time $t\in [0,\tmax]$.

%
\begin{thm}\label{thm:3.1}
Let ${\mathcal I}:=\{0,\dots,N\}\times\{0,\dots,L-1\}$, $(\bbg_{n,\ell})_{\ssy (n,\ell)\in{\mathcal I}}$
be the numerical approximations produced by the {\rm (EFD)} method. Also, we assume that the
functions $F$, $U$, $\Psi$, $G$ along with their derivatives have polynomial growth.
Also, we set
\begin{equation}\label{eq:3.3b}
d(t,\tau,\widetilde{\tau},x):=\tfrac{1}{2}\,
\ksi^2(x+f_0(t))\,\lambda(t,{\widetilde{\tau}})\cdot\lambda(t,{{\tau}}),
\end{equation}
for  $x\in{\mathbb R}$, $t\in[0,\tmax]$ and $\tau$, ${\widetilde\tau}\in[0,\taumax]$.
Then the computational error of the {\rm(EFD)} method has the expansion
\begin{equation}\label{eq:3.4}
\begin{split}
E_{\ssy D}:=&\,\expected\left[{\mathcal G}(g)\right]-\expected\left[{\mathcal G}(\bbg)\right]
=E_{\ssy D,\rm tau}+ E_{\ssy D,\rm tim} +\OM((\Delta t)^2+(\Delta
\tau)^2),\\
\end{split}
\end{equation}
where
\begin{equation}\label{eq:3.5}
\begin{split}
E_{\ssy D,\rm tau}=&\,\sum_{n=0}^{\ssy N-1}{\Delta t_n}\,\left\{\,
\sum_{\ell=0}^{\ssy L-1}\Delta\tau_\ell\,\expected\left[\,
\tfrac{a(t_{n},\tau_{\ell+1},\barrg(t_{n},\tau_{\ell_n}))
-a(t_{n},\tau_\ell,\barrg(t_{n},\tau_{\ell_n}))}{2}\,\,\varphib_{n,\ell}
\,\right]\,\right\}\\
&\,+\sum_{n=0}^{\ssy N-1}{\Delta t_n}\left\{\,\sum_{\ell=0}^{\ssy
L-1} \sum_{\ell'=0}^{\ssy
L-1}\Delta\tau_\ell\,\Delta{\tau_{\ell'}}\,
\expected\left[\,\tfrac{d(t_{n},\tau_{\ell+1},\tau_{\ell'+1},\barrg(t_n,\tau_{\ell_n}))
-d(t_n,\tau_\ell,\tau_{\ell'},\bbg(t_n,\tau_{\ell_n}))
}{2}\,\,\varphib'_{n,\ell,\ell'}\right]
\,\right\},\\
\end{split}
\end{equation}
and
%
%
\begin{equation}\label{eq:3.6}\aligned
 E_{{\ssy D},\rm tim}=&\,\sum_{n=0}^{\ssy N-1}\tfrac{\Delta t_n}{2}\,\Bigg\{
\expected\left[\,\left(\,F(\bbg_{n+1,{\ssy L}})
\,U({\overline{\overline r}}_{n+1})
-F(\bbg_{n,{\ssy L}})\,U({\overline{\overline r}}_n)\,\right)
\, \varphib_{n+1,{\ssy L+1}}\,\right]\\
&\hskip2.0truecm+\expected\left[\,
\left({\overline{\overline r}}_{n+1}-{\overline{\overline r}}_n\,\right)
\,\varphib_{n+1,{\ssy L}}\right]\\
&\hskip2.0truecm+\sum_{\ell=0}^{\ssy L-1}
\expected\left[\Bigl(a(t_{n+1},\tau_\ell,\barrg(t_{n+1},t_{n+1})) -
a(t_{n},\tau_\ell,\barrg(t_{n},t_n))\Bigr)\varphib_{n+1,\ell}\right]\Bigg\}\\
&+\sum_{n=0}^{\ssy N-1}\tfrac{\Delta{t}_n}{2}\,\Biggl\{\,
\sum_{\ell,\ell'=0}^{\ssy L-1}\expected\left[\Bigl(d(t_{n+1},\tau_\ell,\tau_{\ell'},\barrg(t_{n+1},t_{n+1}))
- d(t_{n},\tau_\ell,\tau_{\ell'},\barrg(t_{n},t_n))
\Bigr)
\varphib'_{n+1,\ell,\ell'}\right]\,\Biggr\}\\
\endaligned
\end{equation}
with 
\begin{equation*}
{\overline{\overline r}}_n:=\bbg(t_n,t_n)+f_{_0}(t_n)=\bbg_{n,\ell_n}+f_{_0}(t_n).
\end{equation*}
The two leading order terms $E_{{\ssy D},\rm tau}$ and $E_{{\ssy D},\rm tim}$
in the right hand side of \eqref{eq:3.4} are in a posteriori form and based on the discrete
duals  $\varphib_{n}\in \rset^{\ssy L+2}$
%
%
and $\varphib'_{n}\in\rset^{\ssy (L+2)\times (L+2)}$
%
%
which are determined as follows.
%
%
First, set
\begin{equation*}
\begin{split}
{\overline\Lambda}'_{{\ssy\Psi,Q},\ell}(\bbg):=&\,
\Delta\tau_{\ell}\,
\sum_{i=1}^{\ssy N_{\ssy Q}}w_{{\ssy Q},i}\,\Psi'\left(\barrg_{{\ssy
N},\ell} +f_{_0}(\tau_{\ell}+s_{{\ssy Q},i}\,\Delta\tau_{\ell})\right),\\
{\overline\Lambda}''_{{\ssy\Psi,Q},\ell}(\bbg):=&\,
\Delta\tau_{\ell}\,
\sum_{i=1}^{\ssy N_{\ssy Q}}w_{{\ssy Q},i}\,\Psi''\left(\barrg_{{\ssy
N},\ell} +f_{_0}(\tau_{\ell}+s_{{\ssy Q},i}\,\Delta\tau_{\ell})\right)\\
\end{split}
\end{equation*}
for $\ell=\ell_a,\dots,L-1$, and
%
%
\begin{equation*}
c_{n,j}(x):=a(t_n,\tau_j,x)\,\Delta t_n
+b(t_n,\tau_j,x)\cdot\Delta W_n
\end{equation*}
for $x\in{\mathbb R}$ and $j=0,\dots,L-1$.
Then, the first dual $\varphib$ is defined by the dual backward
problem with final datum
\begin{equation}\label{eq:3.7a}
\varphib_{{\ssy N},{\ell}}=\left\{
\aligned
&\,0,\hskip4.76truecm\ell=0,\dots,\ell_a-1,\\
%
%
&\,F(\bbg_{\ssy N,L})\,\,G'({\overline\Lambda}_{\ssy\Psi,Q}(\bbg))
\,\,{\overline\Lambda}'_{{\ssy\Psi,Q},\ell}(\bbg),
\quad\ell=\ell_a,\dots,L-1,\\
%
%
&\,F'(\bbg_{\ssy N,L})\,\,G({\overline\Lambda}_{\ssy\Psi,Q}(\bbg)),
\hskip1.85truecm\ell = L,\\
%
%
&\,1,\hskip4.78truecm\ell = L+1,\\
\endaligned \right.
\end{equation}
%
%
%
%
%
and 
\begin{equation}\label{eq:3.7b}
\varphib_{n,\ell}= \left\{ \aligned
&\varphib_{n+1,\ell},
\hskip6.7truecm\ell\in\{0,\dots,L-1\}\backslash\{\ell_n\},\\
&\Delta{t}_n\,\,\varphib_{n+1,L}+\Delta{t}_n\,F(\barrg_{n,{\ssy L}})
\,\,U'\left({\overline{\overline r}}_n\right)
\,\,\varphib_{n+1,{\ssy L+1}}\\
&\quad+\sum_{j=0}^{\ssy L-1}
c'_{n,j}(\barrg_{n,\ell_n})\,\,\varphib_{n+1,j}
+\varphib_{n+1,\ell_n},\hskip2.1truecm\ell=\ell_n\\
%
&\varphib_{n+1,\ssy L}+\Delta t_n\,\,F'(\bbg_{n,{\ssy L}})
\,\,U\left({\overline{\overline r}}_n\right)
\,\,\varphib_{n+1,{\ssy L+1}}
,\hskip1.6truecm\ell =L,\\
%
&\varphib_{n+1,{\ssy L+1}},\hskip6.4truecm\ell=L+1,\\
\endaligned \right.
\end{equation}
for $n=N-1,\dots,0.$
The second dual, $ \varphib'$, has final datum
\begin{equation}\label{eq:3.8a}
\varphib'_{{\ssy N},\ell,\ell'}=\left\{\aligned
&0,\hskip6.6truecm\ell=0,\dots,\ell_a-1,\,\ell'=0,\dots,L+1,\\
&F(\bbg_{\ssy N,L})\,\,G''({\overline\Lambda}_{\ssy\Psi,Q}(\bbg))
\,\,{\overline\Lambda}_{{\ssy\Psi,Q},\ell}(\bbg)
\,\,{\overline\Lambda}_{{\ssy\Psi,Q},\ell'}(\bbg),
\hskip0.5truecm\ell,\ell'\in\{\ell_a,\dots,L-1\},\,\,\ell\not=\ell',\\
 &F(\bbg_{\ssy N,L})\,\left[G''({\overline\Lambda}_{\ssy\Psi,Q}(\bbg))
({\overline\Lambda}'_{{\ssy\Psi,Q},\ell}(\bbg))^2\right.\\
&\hskip2.0truecm\left.+G'({\overline\Lambda}_{\ssy\Psi,Q}(\bbg))
\,\,{\overline\Lambda}''_{{\ssy\Psi,Q},\ell}(\bbg)\right],
\hskip0.9truecm\ell\in\{\ell_a,\dots,L-1\},\,\,\ell'=\ell,\\
&F'(\bbg_{\ssy N,L})\,\,G'({\overline\Lambda}_{\ssy\Psi,Q}(\bbg))
\,\,{\overline\Lambda}'_{{\ssy\Psi,Q},\ell}(\bbg),
\hskip2.1truecm\ell=\ell_a,\dots,L-1,\,\,\ell'=L,\\
&0,\hskip6.6truecm\ell=\ell_a,\dots,L,\,\,\ell'=L+1,\\
&0,\hskip6.6truecm\ell=\ell_a,\dots,L+1,\,\,\ell'=0,\dots,\ell_a-1,\\
&F'(\bbg_{\ssy N,L})\,\,G'({\overline\Lambda}_{\ssy\Psi,Q}(\bbg))
\,\,{\overline\Lambda}'_{{\ssy\Psi,Q},\ell'}(\bbg),
\hskip2.0truecm\ell=L,\,\,\ell'=\ell_a,\dots,L-1,\\
&\,F''(\bbg_{\ssy N,L})\,\,G({\overline\Lambda}_{\ssy\Psi,Q}(\bbg)),
\hskip3.5truecm\ell = L,\,\,\ell'=L,\\
&0,\hskip6.6truecm\ell=L+1,\,\,\ell'=\ell_a,\dots,L+1,\\
\endaligned\right.
\end{equation}
and solves the recursion
\begin{equation}\label{eq:3.8b}
\varphib'_{n,\ell,\ell'}=\left\{\aligned
&\varphib'_{n+1,\ell,\ell'},\hskip7.0truecm
\ell,\ell'\in\{0,\dots,L-1\}\backslash\{\ell_n\},\\
&\sum_{j,p=0}^{\ssy L-1}(\delta_{j,\ell_n}+c_{n,j}'(\bbg_{n,\ell_n}))
\,(\delta_{p,\ell_n}+c_{n,p}'(\bbg_{n,\ell_n}))
\,\varphib'_{n+1,j,p}\\
&+2\,\Delta{t}_n\,\sum_{j=0}^{\ssy L-1}(\delta_{\ell_n,j}+c_{n,j}'(\bbg_{n,\ell_n}))
\,\varphib'_{n+1,j,{\ssy L}}\\
&+2\,\Delta{t}_n\,\sum_{j=0}^{\ssy L-1}(\delta_{\ell_n,j}+c_{n,j}'(\bbg_{n,\ell_n}))
\,F(\bbg_{n,{\ssy L}})\,U'({\overline{\overline r}}_n)
\,\varphib'_{n+1,j,{\ssy L+1}}\\
&+\varphib'_{n+1,{\ssy L},{\ssy L}}\,(\Delta{t}_n)^2
+\varphib'_{n+1,{\ssy L+1},{\ssy L+1}}\,(\Delta{t}_n)^2
\,(F(\bbg_{n,{\ssy L}})\,U'({\overline{\overline r}}_n))^2\\
&+2\,\Delta{t}_n\,F(\bbg_{n,{\ssy L}})\,U'({\overline{\overline r}}_n)
\,\varphib'_{n+1,{\ssy L},{\ssy L+1}}\\
&+\sum_{j=0}^{\ssy L-1}c_{n,j}''(\bbg_{n,\ell_n})\,
\varphib_{n+1,j}+\Delta{t}_n\,F(\bbg_{n,{\ssy L}})\,U''({\overline{\overline r}}_n)
\,\varphib_{n+1,{\ssy L+1}},
\hskip0.5truecm\ell=\ell'=\ell_n,\\
&\sum_{j=0}^{\ssy L-1}(\delta_{j,\ell_n}+c_{n,j}'(\bbg_{n,\ell_n}))\,\varphib'_{n+1,j,\ell'}
+\Delta{t}_n\,\varphib'_{n+1,{\ssy L},\ell'}\\
&+\Delta{t}_n\,F(\bbg_{n,{\ssy L}})\,U'({\overline{\overline r}}_n)\,
\varphib'_{n+1,{\ssy L+1},\ell'},\hskip2.5truecm \ell=\ell_n,\,\ell'\in\{0,\dots,L-1\}
\backslash\{\ell_n\}\\
%
&\varphib'_{n,\ell',\ell},
\hskip6.6truecm \ell\in\{0,\dots,L-1\}\backslash\{\ell_n\},
\,\ell'=\ell_n,\\
\endaligned \right.
\end{equation}
\begin{equation}\label{eq:3.8cc1}
\varphib'_{n,\ell,\ell'}=\left\{\aligned
&\varphib'_{n+1,{\ssy L},\ell'}
+\Delta{t}_n\,F'(\bbg_{n,{\ssy L}})\,U({\overline{\overline r}}_n)
\,\varphib'_{n+1,{\ssy L+1},\ell'},
\quad\ell=L,\,\ell'\in\{0,\dots,L-1\}\backslash\{\ell_n\},\\
&\varphib'_{n,\ell',\ell},
\hskip5.8truecm\ell\in\{0,\dots,L-1\}\backslash\{\ell_n\},\,\ell'=L,\\
\endaligned\right.
\end{equation}
\begin{equation}\label{eq:3.8cc2}
\begin{split}
\varphib'_{n,\ell,\ell'}=&\Delta{t}_n\,F'(\bbg_{n,{\ssy L}})\,U'({\overline{\overline r}}_n)
\,\varphib_{n+1,{\ssy L+1}}
+\sum_{j=0}^{\ssy L-1}(\delta_{j,\ell_n}+c_{n,j}'(\bbg_{n,\ell_n}))
\,\varphib'_{n+1,{\ssy L},j}\\
&+\Delta{t}_n\,F'(\bbg_{n,{\ssy L}})\,U({\overline{\overline r}}_n))
\,\sum_{j=0}^{\ssy L-1}(\delta_{j,\ell_n}+c_{n,j}'(\bbg_{n,\ell_n}))
\,\varphib'_{n+1,{\ssy L+1},j}\\
&+(\Delta{t}_n)^2\,F'(\bbg_{n,{\ssy L}})\,U({\overline{\overline r}}_n))
\,\left[\varphib'_{n+1,{\ssy L+1},{\ssy L}}
+F(\bbg_{n,{\ssy L}})\,U'({\overline{\overline r}}_n)
\,\varphib'_{n+1,{\ssy L+1},{\ssy L+1}}\right]\\
&+\Delta{t}_n\,\left[\varphib'_{n+1,{\ssy L},{\ssy L}}
+F(\bbg_{n,{\ssy L}})\,U'({\overline{\overline r}}_n)
\,\varphib'_{n+1,{\ssy L},{\ssy L+1}}\right],\quad(\ell,\ell')
\in\{(L,\ell_n),(\ell_n,L)\},\\
\end{split}
\end{equation}
\begin{equation}\label{eq:3.8cc4}
\begin{split}
\varphib'_{n,\ell,\ell'}=&\Delta{t}_n\,F''(\bbg_{n,{\ssy L}})\,U({\overline{\overline r}}_n)
\,\varphib_{n+1,{\ssy L+1}}+\varphib'_{n+1,{\ssy L},{\ssy L}}
+2\,\Delta{t}_n\,F'(\bbg_{n,{\ssy L}})\,U({\overline{\overline r}}_n)
\,\varphib'_{n+1,{\ssy L+1},{\ssy L}}\\
&+(\Delta{t}_n)^2\,(F'(\bbg_{n,{\ssy L}})\,U({\overline{\overline r}}_n))^2\,
\varphib'_{n+1,{\ssy L+1},{\ssy L+1}},\quad\ell=\ell'=L,\\
\end{split}
\end{equation}
\begin{equation}\label{eq:3.8cc3}
\varphib'_{n,\ell,\ell'}=\varphib'_{n+1,{\ssy L},{\ssy L+1}}
+\Delta{t}_n\,F'(\bbg_{n,{\ssy L}})\,U({\overline{\overline r}}_n)
\,\varphib'_{n+1,{\ssy L+1},{\ssy L+1}},\quad(\ell,\ell')\in\{(L,L+1),(L+1,L)\},
\end{equation}
and
\begin{equation}\label{eq:3.8cc4}
\varphib'_{n,\ell,\ell'}=\left\{\aligned
&\varphib'_{n+1,{\ssy L+1},\ell'},\quad
\ell=L+1,\,\ell'\in\{0,\dots,L-1,L+1\}\backslash\{\ell_n\},\\
&\sum_{j=0}^{\ssy L-1}(\delta_{j,\ell_n}+c_{n,j}'(\bbg_{n,\ell_n}))
\,\varphib'_{n+1,{\ssy L+1},j}
+\Delta{t}_n\,\varphib'_{n+1,{\ssy L+1},{\ssy L}}\\
&+\Delta{t}_n\,F(\bbg_{n,{\ssy L}})\,U'({\overline{\overline r}}_n)\,
\varphib'_{n+1,{\ssy L+1},{\ssy L+1}},\quad\ell=L+1,\,\ell'=\ell_n,\\
&\varphib'_{n,\ell',\ell},\quad \ell\in\{0,\dots,L-1,L+1\},\,\ell'=L+1.\\
\endaligned\right.
\end{equation}
\end{thm}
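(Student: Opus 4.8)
The plan is to adapt the stochastic-flow / discrete-dual methodology of \cite{STZ} to the $\rset^{L+2}$-valued It\^o system underlying the {\rm (EFD)} scheme, using the extended maturity domain so that the maturity nodes and the time nodes can be treated on an equal footing. First I would view $\bbg_n:=(\bbg_{n,0},\dots,\bbg_{n,L-1},\bbg_{n,L},\bbg_{n,L+1})\in\rset^{L+2}$ as the forward-Euler trajectory of a finite-dimensional SDE whose step-$n$ drift and diffusion are the frozen coefficients $\barra,\barrb$ of \eqref{eq:3.3a}, with the diagonal coupling through the index $\ell_n$ of \eqref{eq:A.4}, and whose terminal functional is $x\mapsto F(x_L)\,G\bigl(\sum_{\ell=\ell_a}^{L-1}\int_{\tau_\ell}^{\tau_{\ell+1}}\Psi(x_\ell+f_{_0}(\tau))\,d\tau\bigr)+x_{L+1}$, so that ${\mathcal G}(\bbg)$ is this functional evaluated at $\bbg_N$. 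The role of the extended domain is that on $[0,\tmax]\times[0,\taumax]$ the coefficient $a(t,\tau,\cdot)$ and the second-order coefficient $d(t,\widetilde\tau,\tau,\cdot)$ of \eqref{eq:3.3b} are smooth in $\tau$ across the diagonal $\tau=t$; this makes the $\tau$-differences $a(t_n,\tau_{\ell+1},\cdot)-a(t_n,\tau_\ell,\cdot)$ and the corresponding $d$-differences in \eqref{eq:3.5} into $O(\Delta\tau_\ell)$ quantities that play, in a posteriori form, the role of the maturity truncation density $\tfrac{(\Delta\tau_\ell)^2}{2}\,\partial_\tau(\cdot)$ --- exactly as $\tfrac{\Delta t_n}{2}$ times the forward time-differences in \eqref{eq:3.6} stand for $\tfrac{(\Delta t_n)^2}{2}\,\partial_t(\cdot)$.

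Second, I would set up the exact error identity. Writing $\widehat g_{m,\ell}=g(t_m,\tau_\ell)$ and using the local consistency residuals ${\mathcal K}_{m,\ell}$ of Proposition~\ref{CONPRO_1} (so that $\widehat g$ satisfies the {\rm (EFD)} recursion with the extra term ${\mathcal K}_{m,\ell}$), the state error $\widehat g_{n,\ell}-\bbg_{n,\ell}$ obeys a linear perturbed recursion driven by the ${\mathcal K}_{m,\ell}$, as in the proof of \thmref{A_basic_theorem}; solving it and pairing with a second-order Taylor expansion of ${\mathcal G}$ at $\bbg_N$ gives, after conditioning on the Brownian filtration step by step, the dual identity $E_{\ssy D}=\sum_{n,\ell}\expected[\varphib_{n+1,\ell}\,{\mathcal K}_{n,\ell}]$ plus $\varphib'$-weighted second-order terms plus a higher-order remainder, where $\varphib_n$ and $\varphib'_n$ are the first and second variations of the discrete stochastic flow $\Phi_{n,N}$ composed with ${\mathcal G}$. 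I would check that \eqref{eq:3.7a}--\eqref{eq:3.8cc4} are exactly these variations by downward induction on $n$: differentiating the one-step map \eqref{eq:2.1b} in each state component --- the $\ell_n$-component feeds, through $\ksi$, $\ksi^2$, $F$ and $U$, into every component of $\bbg_{n+1}$, which produces the factors $\delta_{j,\ell_n}+c'_{n,j}(\bbg_{n,\ell_n})$, while the $L$- and $L+1$-components are affine --- gives by the chain rule precisely the stated final data \eqref{eq:3.7a}, \eqref{eq:3.8a} (differentiation of ${\mathcal G}$ at $\bbg_N$) and the stated recursions, with $\varphib'$ symmetric. In the same induction I would establish, using \lemref{AUXLM_3}, the hypotheses \eqref{KSI_1}--\eqref{KSI_2} and the polynomial growth of $F,G,\Psi,U$ and their derivatives, the uniform bounds $\max_n\expected[|\varphib_n|^{2\kappa}]+\max_n\expected[|\varphib'_n|^{2\kappa}]\le C$ with $C$ independent of the partitions --- the $(L+2)$-dimensional counterpart of the dual bounds of \cite{STZ}.

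Third, I would Taylor-expand the residual ${\mathcal K}_{n,\ell}$ itself to second order in $\Delta t_n$, $\Delta W_n$ and $\Delta\tau_\ell$. Its pieces are: the mismatch of $a,b$ in $t$ over $[t_n,t_{n+1}]$ and the freezing of the diffusion --- which, after conditioning with $\expected[(\Delta W_n^j)^2\mid{\mathcal F}_{t_n}]=\Delta t_n$ and odd moments dropping out, produce the $\tfrac{\Delta t_n}{2}$-weighted time-differences of $a$ and $d$ in \eqref{eq:3.6}; the mismatch of $a,b$ in $\tau$ from using node values, which gives the $\tfrac12$-weighted $\tau$-differences of $a$ and $d$ in \eqref{eq:3.5} (the outer $\sum_n\Delta t_n$ arising because one such residual occurs per time step); and the $Y$- and $Z$-increments, which are the source of the terms involving ${\overline{\overline r}}_n$, $F(\bbg_{n,L})$ and $U({\overline{\overline r}}_n)$ in \eqref{eq:3.6}. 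Keeping only the $O(\Delta t_n)$- and $O(\Delta\tau_\ell)$-densities and pairing them with $\varphib$ and $\varphib'$ reproduces exactly $E_{\ssy D,\rm tau}$ and $E_{\ssy D,\rm tim}$.

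The main obstacle is the remainder bound $\OM((\Delta t)^2+(\Delta\tau)^2)$. Because weak order one is strictly better than strong order one-half, one cannot simply apply Cauchy--Schwarz with the strong estimates of \thmref{A_basic_theorem} to every leftover term --- that would leave a spurious $(\Delta t)^{1/2}\Delta\tau$ contribution --- so one must exploit that the It\^o (martingale-difference) parts of ${\mathcal K}_{n,\ell}$ have zero ${\mathcal F}_{t_n}$-conditional mean and enter $E_{\ssy D}$ only through their correlation with the non-${\mathcal F}_{t_n}$-measurable part of $\varphib_{n+1,\ell}$, which gains a further power of $(\Delta t_n)^{1/2}$. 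The rigorous bound then combines the $2\kappa$-moment estimates for $g$, its $\tau$-derivatives, $Y$, $Z$, ${\mathcal G}(g)$ and $\bbg$ (Lemmas~\ref{AUXLM_2}, \ref{AUXLM_14} and~\ref{AUXLM_3}) and the uniform dual bounds above; the Lipschitz-in-$t$ and Lipschitz-in-$\tau$ estimates (Lemmas~\ref{CONLM_1} and~\ref{CONLM_2}) together with both the $2\kappa$-moment and the finite-difference consistency estimates of Proposition~\ref{CONPRO_1}; and careful summation over the $\sim1/\Delta t$ time steps and $\sim1/\Delta\tau$ maturity nodes. The only remaining difficulty is organizational: the second-dual recursion \eqref{eq:3.8b}--\eqref{eq:3.8cc4} splits into many cases according to whether $\ell$ and/or $\ell'$ equals $\ell_n$, $L$ or $L+1$, but each case is forced by differentiating the one-step map twice and by the symmetry of $\varphib'$, so verifying them is routine, if lengthy.
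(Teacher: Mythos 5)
Your overall strategy -- recast the (EFD) scheme as an Euler method for an $\rset^{L+2}$-valued system, represent the weak error through the first and second discrete duals, identify the leading densities by Taylor expansion, and control the remainder by exploiting the martingale structure rather than Cauchy--Schwarz with the strong rate -- is the right family of ideas, and your observations about the dual recursions (each case forced by differentiating the one-step map, symmetry of $\varphib'$) and about why a naive use of Theorem~\ref{A_basic_theorem} would lose half a power of $\Delta t$ are both correct. But your route differs from the paper's, and in one place the difference opens a genuine gap. The paper does not work with the exact solution at the coarse maturity nodes at all: it first replaces $g$ by a finite-dimensional Euler approximation $g_*$ on a much finer $t$- and $\tau$-grid, views $\bbg$ (and the $\tau$-semidiscrete process $\bar g$) as piecewise constant functions on that same fine grid, and then applies Theorem 2.2 of \cite{STZ} to the two Euler schemes with the coefficient split $(a-\bar a)+(\bar a-\barra)$, which produces \eqref{eq:3.5} and \eqref{eq:3.6} directly; a final limit $\Delta\hat t,\Delta\hat\tau\to0$ using strong convergence of $g_*$ and of the duals removes the auxiliary discretization.

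The gap in your version is the accounting of the $\tau$-discretization error. You drive the whole expansion by the nodal consistency residuals ${\mathcal K}_{n,\ell}$ of Proposition~\ref{CONPRO_1} and the nodal state errors $g(t_n,\tau_\ell)-\bbg_{n,\ell}$, and you attribute the terms of \eqref{eq:3.5} to ``the mismatch of $a,b$ in $\tau$ from using node values'' inside ${\mathcal K}_{n,\ell}$. For the (EFD) method there is no such mismatch in ${\mathcal K}_{n,\ell}$: at the node $\tau_\ell$ the exact equation and the scheme use the \emph{same} coefficient $a(\cdot,\tau_\ell,\cdot)$, and ${\mathcal K}_{n,\ell}$ only contains the $t$-freezing of the coefficients and the freezing of the diagonal state. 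The quantity that \eqref{eq:3.5} actually measures is the explicit $\tau$-variation of the coefficients between nodes, $a(t_n,\tau_{\ell+1},x)-a(t_n,\tau_\ell,x)$ at fixed $x$, i.e.\ the off-grid coefficient error $a-\bar a$; in a bookkeeping that tracks only grid values this reappears as the terminal maturity-interpolation error $\expected[{\mathcal G}(g)]-\expected\bigl[{\mathcal G}_{\rm disc}\bigl((g(\tmax,\tau_\ell))_\ell\bigr)\bigr]$, coming from replacing $g(\tmax,\tau)$ by $g(\tmax,\tau_\ell)$ inside $\Lambda(\Psi(\cdot))$. That term is generically first order in $\Delta\tau$ (since $g(0,\cdot)=0$, all of the $\tau$-variation of $g$ is generated by the coefficients), so it cannot be swept into the $\OM((\Delta t)^2+(\Delta\tau)^2)$ remainder, yet it appears nowhere in your decomposition of the residual. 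As written, your argument would therefore either omit the leading $\tau$-contribution or misattribute it; to repair it you must either add and expand this terminal interpolation term explicitly (re-expressing it through the coefficient differences and the duals to recover \eqref{eq:3.5}), or adopt the paper's device of a common fine maturity grid so that the off-grid coefficient mismatch enters the dynamics rather than the terminal functional.
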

%
%
%
\begin{proof}
The proof is an application of Theorem 2.2 in \cite{STZ}. To be
able to split the time and maturity time discretization errors,
introduce the semidiscretized fluxes $\brr{a}$ and $\brr{b}$ that,
for $\tau_\ell\le \tau <\tau_{\ell+1}$, are defined as 
$\bara(t,\tau,x) \equiv\ksi^2(x+f_0(t))\,\tlambda(t,\tau_\ell)$,
$\barb(t,\tau,x) \equiv\ksi(x+f_0(t))\kenn\lambda(t,\tau_\ell)$
and denote by ${\overline g}$ the corresponding semidiscrete
in $\tau$ solution.
As a first step,
 replace the exact solution of \eqref{eq:g_proc_def},
 $g$, by a finite dimensional approximation:
 a piecewise constant $g_*(t,\cdot)$, which
 is an Euler approximation with a much finer discretization, both in
 time $t$ and maturity time $\tau$, than $\barrg$.
 Thus, $g_*$ uses a time grid
 $({\hat t}_n)_{n=0}^{\ssy P}$ much finer than  $(t_n)_{n=0}^{\ssy N}$,
 and a maturity time grid, $({\hat\tau}_\ell)_{\ell=0}^{\ssy M}$ much finer than
 $(\tau_\ell)_{\ell=0}^{\ssy L}$. Consequently, the number of time steps satisfy
 $P>\!\!>N$, $M>\!\!>L$, respectively, and
$ \Delta {\hat t}:=\max_{0\leq{m}\leq{\ssy P-1}}{\hat t}_{m+1}-{\hat t}_m<\!\!<\Delta t$,
$ \Delta {\hat\tau}:=\max_{0\leq{m}\leq{\ssy M-1}}{\hat\tau}_{m+1}-{\hat\tau}_m<\!\!<\Delta \tau.$
In the application of Theorem 2.2 in \cite{STZ}, include the
$\tau$-discretization error terms $a-\bara$, $b-\barb$ as well as
the $t$-discretization terms $\bara-\barra$, $\barb-\barrb$ in the
error expansion, following Lemmata 2.1-2.5 in \cite{STZ}, to
obtain (\ref{eq:3.4}-\ref{eq:3.6}) for $g$ replaced by the
piecewise constant process $g_*$.
 For this purpose, observe that $\bar g$ can be also thought of as a
 piecewise constant function on the finer $\tau$-partition that defines $g_*$.
%
The second step is to let $M,P\to \infty$ and $\Delta {\hat\tau}, \Delta {\hat t}\to 0$,
using
\begin{equation*}\label{eq:3.9}
\left(\expected\left[\max_{[0,\taumax]}|g(t,\cdot)-g_*(t,\cdot)|^2
+\left|\tfrac{\left[g(t,{\hat\tau}_{m+1})-g(t,{\hat\tau}_m)\right]
-\left[g_*(t,{\hat\tau}_{m+1})-g_*(t,{\hat\tau}_m)\right]}
{\hat\tau_{m+1}-{\hat\tau}_m}\right|\right]\right)^{\frac{1}{2}}
=\OM\left(\Delta {\hat\tau}+(\Delta {\hat t})^{\frac{1}{2}}\right),
\end{equation*}
for $t\in[0,\tmax]$ and $m=0,\dots,M-2$,
along with similar estimates for the corresponding dual functions
$\varphib,\varphib',\ldots$, to control the higher
order terms in the error expansion. The latter strong convergence estimates follow
moving along the lines of the analysis of Section~\ref{Section_Strong}.
%
%
%
%
%
%
\end{proof}
%
%
%
%
%
%
\begin{remark}\label{rem:3.1}
In the {\rm (EFD)} method the $\tau$-discretization error of
\eqref{eq:3.4} and \eqref{eq:3.5} can, by \eqref{eq:3.3a},
\eqref{eq:3.3b},
%
be expressed by
\begin{equation}\label{eq:3.10}
\aligned E_{\ssy D,\rm tau}=&\sum_{n=0}^{\ssy N-1}{\Delta t_n}\,\Biggl\{
\sum_{\ell=0}^{\ssy L-1}\Delta\tau_\ell\,
\expected\left[\ksi^2({\overline{\overline r}}_n)\kenn\varphib_{n,\ell}\right]
\,\tfrac{{\widetilde{\lambda}}(t_n,\tau_{\ell+1})
-{\widetilde{\lambda}}(t_n,\tau_{\ell})}{2}\\
&+\tfrac{1}{4}\sum_{\ell,\ell'=0}^{\ssy L-1}
\expected\left[\ksi^2(r_n)\,{\varphib'}_{n,\ell,\ell'}\right]\,\left[
\lambda(t_n,\tau_{\ell+1})\cdot\lambda(t_n,\tau_{\ell'+1})
-\lambda(t_n,\tau_{\ell})\cdot\lambda(t_n,\tau_{\ell'})
\right]\,\Delta\tau_\ell\Delta\tau_{\ell'} \Biggr\}\\
\endaligned
\end{equation}
and the time discretization is
\begin{equation}\label{eq:3.11}
\aligned E_{\ssy D,\rm tim}=\sum_{n=0}^{\ssy N-1}\tfrac{\Delta t_n}{2}\,\Biggl\{&
\expected\left[F(\barrg_{n+1,{\ssy L}})\mcJ({\overline{\overline r}}_{n+1})
-F(\barrg_{n,{\ssy L}})\mcJ({\overline{\overline r}}_{n})\right]
+\expected\left[({\overline{\overline r}}_{n+1}-{\overline{\overline r}}_n)
\,\varphib_{n+1,{\ssy L}}\right]\\
&+\sum_{\ell=0}^{\ssy L-1}\expected\left[(\ksi^2({\overline{\overline r}}_{n+1})
\,\tlambda(t_{n+1},\tau_{\ell})-\ksi^2({\overline{\overline r}}_n)
\,\tlambda(t_{n},\tau_{\ell}))\,\varphib_{n+1,\ell}\right]\\
 &+\tfrac{1}{2}\,\sum_{\ell,\ell'=0}^{\ssy L-1}
\expected\Big[\big(\ksi^2({\overline{\overline r}}_{n+1})\,
\lambda(t_{n+1},\tau_{\ell})\cdot\lambda(t_{n+1},\tau_{\ell'})\\
&\hskip3.0truecm -\ksi^2({\overline{\overline r}}_n)
\,\lambda(t_{n},\tau_{\ell})\cdot\lambda(t_{n},\tau_{\ell'})\big)\,
\varphib'_{n+1,\ell,\ell'}\Big]\Biggr\}\cdot
\endaligned
\end{equation}
In Monte Carlo computations all the expected values in \eqref{eq:3.10} and
\eqref{eq:3.11} are naturally approximated by sample averages.
\end{remark}
\begin{remark}
The analysis of the {\rm (EFE)} method follows a similar line as the
estimates of the {\rm (EFD)} method. The difference lies in the
$\tau$-discretization error, which by virtue of the orthogonality
of both $\tlambda-\prj \tlambda$ and $\lambda-\prj \lambda$ to the
subspace of piecewise constant functions $S_{\ssy\Delta \tau}$,
becomes second order accurate. Therefore, more careful expansions,
including interpolation estimates, need to be carried out in order
to capture the second order contributions from the
$\tau$-discretization.
\end{remark}
\section{ Numerical experiments}\label{sec:4}
In this section we provide numerical evidence for the weak computational
error \eqref{Comp_Error} of the numerical methods defined in
Section~\ref{sec:2} approximating the quantity of interest
$\expected\left[\mcF(f)\right]=\expected\left[{\mathcal G}(g)\right]$
described in (\ref{eq:1.3}-\ref{eq:1.3b}).
In particular, we show results from numerical experiments with examples that
have known exact solution which permit a straightforward derivation of an exact
solution to compare with.
%
%
%
The implementation uses double precision {\tt FORTRAN 77} and
simulates the increments of the $J$ independent Wiener processes
by a double precision modification of the functions {\tt ran1} and
{\tt gasdev} proposed in \cite{Pre}.
The numerical quadrature approximation
${\overline\Lambda}_{\ssy\Psi, Q}(\bbg)$ of $\Lambda_{\ssy\Psi}(\bbg)$ 
in \eqref{eq:2.3a} is done via the use of Simpson's quadrature rule.
For the particular case of the (EFD) method, the estimates for the
computational error developed in Theorem~\ref{thm:3.1} are compared
with the exact computational error. 
The numerical results obtained are in agreement with the theory and the
work to compute these estimates is small.
%
%
%
%
%
%
\subsection{Control of the statistical error}
For $M$ independent samples $\{Y(\omega_j)\}_{j=1}^{\ssy M}$ of a
random variable $Y$, with $\expected\left[\,|Y|^6\,\right] < \infty$,
define the sample average ${\mcA}(Y;M)$ and the sample standard
deviation ${\mcS}(Y;M)$ of $Y$ by
\begin{equation*}\label{eq:sample_avg}
{\mcA}(Y;M) \equiv \tfrac{1}{M}\,{\sum_{j=1}^{\ssy M}}
Y(\omega_j)\quad\mbox{\rm and}\quad{\mcS}(Y;M) \equiv
\left[\,{\mcA}(Y^2;M)-({\mcA}(Y;M))^2\,\right]^{\frac{1}{2}}.
\end{equation*}
Let $\sigma\equiv\sqrt{\expected[|Y-\expected[Y]|^2]}$ and consider the random
variable
\begin{equation*}
Z_{\ssy
M}\equiv\tfrac{\sqrt{M}}{\sigma}\,\left(\mcA(Y;M)-\expected[Y]\right)
\end{equation*}
with cumulative distribution function $F_{\ssy Z_M}(x) \equiv
P(Z_{\ssy M}\leq x)$, for $x\in\rset$. Let
\begin{equation*}
\lambda\equiv
\tfrac{1}{\sigma}\,\left(\,\expected\left[\,\left| Y-\expected[Y] \right|^3
\,\right]\,\right)^{\frac{1}{3}}<\infty,
\end{equation*}
then the Berry-Esseen theorem (cf. \cite{Durr} p. 126), gives the
following estimate in the central limit theorem
\begin{equation*}
\sup_{x\in\rset}|F_{\ssy Z_M}(x)-\Phi(x)| \leq
\tfrac{3}{\sqrt{M}}\,\,\lambda^3
\end{equation*}
for the rate of convergence of $F_{\ssy Z_M}$ to the  distribution
function, $\Phi$, of a normal random variable with mean zero and
variance one, i.e.
\begin{equation*}\label{eq:normal}
\Phi(x)=\tfrac{1}{\sqrt{2\pi}}\,\int_{-\infty}^x
e^{-\tfrac{s^2}{2}}\;ds.
\end{equation*}
Since in the examples below $M$ is sufficiently large, i.e. $M\gg
36\,\lambda^6$, the statistical error
\begin{equation*}
{\mathcal E}_{\ssy S}(Y;M)\equiv \expected[Y]-\mcA(Y;M)
\end{equation*}
satisfies, by the Berry-Esseen theorem, the following probability
approximation
\begin{equation*}
P\left( \left[ |\mcE_{\ssy S}(Y;M)|
\leq{c_{_0}}\,\tfrac{\sigma}{\sqrt{M}}\right]\right) \simeq
\,2\Phi(c_{0})-1.
\end{equation*}
In practice choose some constant $c_{_0}\ge1.65$, so the normal
distribution satisfies
\begin{equation*}
1>2\Phi(c_{_0})-1\ge 0.901
\end{equation*}
and the event
\begin{equation}\label{eq:4.1}
|{\mathcal E}_{\ssy S}(Y;M)|\leq{\mathtt E}_{\ssy S}(Y;M)\equiv
c_{_0}\,\,\tfrac{{\mcS}(Y;M)}{\sqrt{M}}
\end{equation}
has probability close to one, which involves the additional step
to approximate $\sigma$ by $\mcS(Y;M)$, cf. \cite{Fi}. Thus, in
the computations $\mathtt{E}_{\ssy S}(Y;M)$ is a good
approximation of the statistical error $\mcE_{\ssy S}(Y;M)$.
\par
For a given $\tol>0$, the goal is to find $M$ such that
$\ErrS(Y;M)\leq\tol$. The algorithm described in \cite{STZ}
adaptively finds the number of realizations $M$ to compute the
sample average ${\mcA}(Y;M)$ as an approximation to $\expected[Y]$. With
large probability, depending on $c_{_0}$, the statistical error in
the approximation is then bounded by $\tol$. For more details on
the implementation of an adaptive algorithm to control the
statistical error, see \cite{STZ}.
%
%

%
\begin{remark}[Computational cost of the error estimates]\label{rem:3.2}
The work to approximate $\expected\left[{\mathcal G}(g)\right]=\expected[X]$ within an accuracy $\tol$
is $ \OM\left(\tfrac{\text{\rm Var}[X]}{\tol^{4}}\right)$, provided we use the Monte
Carlo version of the EFD method as in \eqref{eq:2.4b}. It is
therefore important to try to use both variance reduction
techniques and adaptive methods to save computational effort. On
the other hand, the work needed to compute sufficiently sharp
error estimates as described in {\rm Theorem~\ref{thm:3.1}} is only
$\OM({\tol^{-3}})$. The number of realizations needed to have a
statistical error in the error bound much smaller than $\tol$ is
only $\OM(\tol^{-1})$ instead of the $\OM\left(\tfrac{\text{\rm Var}[X]}{\tol^2}\right)$
realizations we need to compute an approximation of ${\mathcal F}(g)$
using \eqref{eq:2.4b}, while the work to compute the error
estimate for each realization is still $\OM(\tol^{-2})$, including
the computation of the duals $\varphib$ and $\varphib'$.
This surprising reduction of work for  $\varphib$ and $\varphib'$
is special for the \text{\rm HJM} model studied here. For general
SDEs the corresponding work would be $\OM(\tol^{-4})$ instead of
$\OM(\tol^{-2})$.
 Thus, cheap and sharp error bounds  are obtained
by the use of the a  posteriori error estimates in
{\rm Theorem~\ref{thm:3.1}}.
Observe that if variance reduction techniques are
applied to the approximation of $\expected\left[{\mathcal G}(g)\right]$,
 it is natural to try to use them also
to reduce the variance in the error estimators.
\end{remark}
%
%
%
\begin{remark}[Variance reduction techniques]
The use of variance reduction techniques can decrease
substantially the statistical errors.
In particular the so called {\it antithetic variates} technique
introduced in \cite{HM} reduces the variance in a sample estimator
${\mcA}(M;Y)$ by using another estimator  ${\mcA}(M;{\widetilde Y})$ with the
same expectation as the first one, but which is negatively
correlated with the first. Then, the improved estimator is
${\mcA}(M;\frac{Y+{\widetilde Y}}{2})$. Here, the choice of $Y$ and ${\widetilde Y}$
relates to the Wiener process $W$ and its reflection along the
time axis, $-W$, which is also a Wiener process. If a realization
of the Wiener process, $W(\cdot,\omega_j)$, yields, using one of
the numerical discretizations {\rm(\ref{eq:2.1a}-\ref{eq:2.2b})},
a realization $\bbg(\cdot,\cdot,\omega_j)$ and
$-W(\cdot,\omega_j)$ yields $\bbgg(\cdot,\cdot,\omega_j)$
respectively, then we choose
\begin{equation*}
\tfrac{1}{M}\sum_{j=1}^{\ssy M} \tfrac{F\left(\barrg_{\ssy
N,L}(\omega_j)\right)\, G\left({\overline\Lambda}_{\ssy
\Psi,Q}(\bbg(\cdot,\cdot,\omega_j))\right)+\bbg_{_{\ssy N,L+1}}(\omega_j)
+F\left(\bbgg_{\ssy N,L}(\omega_j)\right)\,
G\left({\overline\Lambda}_{\ssy\Psi,Q}(\bbgg(\cdot,\cdot,\omega_j))\right)
+\bbgg_{_{\ssy N,L+1}}(\omega_j)}{2}
\end{equation*}
as a better estimate. All the numerical results presented below
use antithetic variates.
In general, the use of {\it control variates}, see
\cite{Caflisch}, can be also combined with other variance
reduction methods. For example, the control variates technique is
based on the knowledge of an estimator $Y_{\star}$, positively
correlated with $Y$, whose expected value $\expected[Y_{\star}]$ is known and
relatively close to the desired $\expected[Y]$, yielding
$Y-Y_{\star}+\expected[Y_{\star}]$ as
an improved estimator. The estimates presented in this work do not
preclude the use of control variates, and even though it is not
applied here, it can be a valuable tool in practical computations.
\end{remark}
%
%
%
%
\subsection{Numerical results}
Now let us introduce some notation to be used later
in the description of our numerical results. $\EEtau$ denotes the
sample average approximating the $\tau$-discretization error
\eqref{eq:3.5} and $\EEtim$ denotes the sample average
approximation to the $t$-discretization error \eqref{eq:3.6}.
Beside this, denote by $\EES$ the approximation \eqref{eq:4.1} to
the statistical error $E_{\ssy S}$ introduced in \eqref{eq:2.5a}
and by $\EEtauS$ the approximation \eqref{eq:4.1} to the
statistical error in the estimation of the $\tau$-discretization
error \eqref{eq:3.5} by sample averages. Similarly, $\EEtimS$
denotes the corresponding approximation to the statistical error
in the estimation of the expected values in $t$-discretization
error \eqref{eq:3.6}.
\subsubsection{ Ho-Lee model}
\par The Ho-Lee  model has  $\ksi(x)=\sigma$ and
$\lambda_{_0}(x)=1$ so ${\widetilde\lambda}_{_0}(x)=x$ and
\eqref{eq:1.2a}-\eqref{eq:1.2b} takes the form
\begin{equation}\label{eq:4.2}
\begin{split}
df(t,\tau)=&\,\sigma^2\,(\tau-t)\;dt
+\sigma\,dW(t),\quad 0\leq{t}\leq\tau,\\
f(0,\tau)=&\,f_{_0}(\tau)
\end{split}
\end{equation}
for $\tau\in[0,\taumax]$.
In this example the initial condition is
$f_{_0}(\tau)=r_{_0}-\frac{\sigma^2}{2}\tau^2+\int_0^{\tau}\qhta(s)ds,$
where $r_{_0}$ and  $\sigma$ are real positive constants and
$\qhta:\rset^+\to\rset$ is a given function.
%
%
Then, the exact solution of \eqref{eq:4.2} is
\begin{equation*}
f(t,\tau)=r_{_0}-\tfrac{\sigma^2}{2}(\tau-t)^2
+\int_0^{\tau}\qhta(s)\;ds+\sigma\,W(t),
\quad 0\leq{t}\leq\tau,
\end{equation*}
which follows the normal distribution and therefore, yields bond
prices which are log-normal distributed, allowing the use of Black
and Scholes formulas for the pricing of call and put options on
bonds.
\par
Setting $\tau_a=\tmax$, $F(x)=1-x$, $G(x)=x$, $\Psi(x)=x$ and
$U(x)=0$ in \eqref{eq:1.3a}-\eqref{eq:1.3b},  the functional to be
computed has the form
\begin{equation}\label{eq:4.3}
\expected\left[{\mcF}(f)\right]=\expected\left[\left(1-\int_0^{\tmax}f(s,s)ds\right)
\,\left(\int_{\tmax}^{\taumax}f(\tmax,\tau) \;d\tau\right)\right].
\end{equation}
In the numerical experiments we choose $r_{_0}=0.05$,
$\sigma=0.01$, $\qhta(s)=\frac{1}{10}\,e^{-s}$. Then
$\expected\left[{\mcF}(f)\right]$ is a known function of $\tmax$ and
$\taumax$.
The first experiment sets $\tmax=1.0$ and $\taumax=2.0$, comparing
the efficiency of the (EFD) and (EFE) methods. Table~\ref{tab:4.1}
shows the computational error for both methods and compares the a
posteriori approximation of the error with the true computational
error for the (EFD) method.
Here, a confidence interval for the ratio between the error
approximation and the exact computational error, ${\mcE}_c$,
introduced in \eqref{Comp_Error}, is $[A-B,A+B]$, with
$A\equiv\tfrac{\EEtim+\EEtau}{|{\mcE}_c|}$ and
$B\equiv\tfrac{\EES+\EEtimS+\EEtauS}{|{\mcE}_c|}$. Whenever we use
the (EFD) method we call ${\mathcal E}_{\ssy
C,E\!F\!D}\equiv{\mcE}_c $ and if we use the (EFE) method we call
${\mathcal E}_{\ssy C,E\!F\!E}\equiv{\mcE}_c$.
Observe that the ratio $A\pm B$ of the {\it a posteriori}
approximation of the error over the computational error becomes
closer and closer to one as we refine the time and maturity
partitions, provided that the statistical error is small compared
to the $t$-discretization error and the $\tau$-discretization
error.
In this example, the $t$-discretization gives the largest
contribution to the computational error, and there is no practical
advantage in the use of the (EFE) method.
%
%
%
\begin{table}[ht]\label{tab:4.1}
                   \centering
\begin{tabular}{|c|c|c|c|}
\hline
\ \ \ \ ${{\text{\rm iseed}=-1}}$\ \ \ \ & \ \ \ \ (EFE) \ \ \ \ &  \multicolumn{2}{c|}{(EFD)} \\
\hline
 $N=L$  &  \ \ \ \ ${\mathcal E}_{\ssy C,E\!F\!E}$
 \ \ \ \ & \ \ \ \ ${\mathcal E}_{\ssy C,E\!F\!D}$
 \ \ \ \ &  \ \ \ \ $[A-B,A+B]$ \ \ \ \  \\
\hline
 5  &  $-8.40\times 10^{-4}$      &  $-8.25\times 10^{-4}$      &  $[0.97,0.97]$ \\
\hline
10  &  $-4.16\times 10^{-4}$      &  $-4.08\times 10^{-4}$       & $[0.98,0.99]$ \\
\hline
20  &  $-2.07\times 10^{-4}$      &  $-2.03\times 10^{-4}$       & $[0.98,1.00]$ \\
\hline
\end{tabular}
\par\vskip0.2truecm\par
\centerline{Table~\ref{tab:4.1}. Comparing the (EFD) and (EFE)
methods in the Ho-Lee model approximating} \centerline{ functional
\eqref{eq:4.3} with ${M=5000}$ and $c_0=1.65$.}
\end{table}

\subsubsection{Vasicek model}
The Vasicek model has $\ksi(x)=\sigma$ and
$\lambda_{_0}(x)=e^{-\alpha\,x}$, so
\begin{equation*}
\tlambda_{_0}(x)=\tfrac{1}{\alpha}\,e^{-\alpha\,x}
\,\left(\,1-e^{-\alpha\,x}\,\right)
\end{equation*}
and the forward rate equation (\ref{eq:1.2a}-\ref{eq:1.2b})
becomes
\begin{equation}\label{eq:4.4}
\begin{split}
df(t,\tau)=&\,\tfrac{\sigma^2}{\alpha}\,\left(1-e^{-\alpha
(\tau-t)}\right)\,e^{-\alpha (\tau-t)}\;dt
+\sigma\,e^{-\alpha (\tau-t)}\;dW(t),\quad 0\leq{t}\leq\tau,\\
f(0,\tau)=&\,f_{_0}(\tau)
\end{split}
\end{equation}
for $\tau\in[0,\taumax]$. In this example the initial condition is
\begin{equation*}
f_{_0}(\tau)=\bigl(r_{_0}-\tfrac{\qhta}{\alpha}\bigr)\,
e^{-\alpha\tau}+\tfrac{\qhta}{\alpha}-\tfrac{\sigma^2}{2\,\alpha^2}
\,\left(1-e^{-\alpha\tau}\right)^2,\quad\tau\in[t,\taumax],
\end{equation*}
where $r_{_0}$, $\sigma$, $\alpha$ and $\qhta$ are given positive
constants. The solution of \eqref{eq:4.4} is then
\begin{equation*}
\begin{split}
f(t,\tau)=&\,e^{-\alpha(\tau-t)}\,\left[\, e^{-\alpha
t}\,\left(r_{_0}-\tfrac{\qhta}{\alpha}\right)
+\sigma\,\int_{0}^te^{-\alpha(t-s)}\;dW(s)
\,\right]\\
&\,+\tfrac{\qhta}{\alpha} -\tfrac{\sigma^2}{2\,\alpha^2}
\,\left(1-e^{-\alpha(\tau-t)}\right)^2, \quad 0\leq{t}\leq\tau,
\end{split}
\end{equation*}
which is normally distributed and yields bond prices that are
lognormal, as in the Ho-Lee model.
\par
Here we set $\tau_a=\tmax=0.3$, $\taumax =6.0$, and approximate
again the functional defined in \eqref{eq:4.3}.
In addition, we take $r_{_0}=0.03$, $\alpha=1.0$, $\sigma=0.01$
and $\qhta=0.05$.
%
%
Table~\ref{tab:4.2} displays the computational errors for the
(EFD) and (EFE) methods and compares the {\it a posteriori}
approximation of the error with the true error for the (EFD)
method.
Observe that the ratio $A\pm B$ of the a posteriori approximation
of the error over the computational error becomes closer and
closer to $1$ as we refine the time and maturity partitions,
provided that the statistical error is small compared to the $t$-
and $\tau$-discretization error.
%
%
%
\begin{table}[ht]\label{tab:4.2}
                   \centering
\begin{tabular}{|c|c|c|c|}
\hline
\ \ \ \ ${{\text{\rm iseed}=-1}}$\ \ \ \ & \ \ \ \ (EFE) \ \ \ \
&  \multicolumn{2}{c|}{(EFD)} \\
\hline
 $N=L$  &  \ \ \ \ ${\mathcal E}_{\ssy C,E\!F\!E}$ \ \ \ \
 & \ \ \ \ ${\mathcal E}_{\ssy C,E\!F\!D}$
 \ \ \ \ &  \ \ \ \ $[A-B,A+B]$ \ \ \ \  \\
\hline
 5  &   $-2.30\times 10^{-5}$      &  $-2.07\times 10^{-5}$      &  $[1.92,1.95]$ \\
\hline
10  &   $-2.05\times 10^{-5}$     &  $-1.95\times 10^{-5}$      & $[1.03,1.05]$ \\
\hline
20  &    $-1.06\times 10^{-5}$     &  $-1.00\times 10^{-5}$     & $[0.99,1.02]$ \\
\hline
\end{tabular}
\par\vskip0.2truecm\par
\centerline{Table~\ref{tab:4.2}. Comparing the (EFD) and (EFE)
methods in the Vasicek model approximating}
\centerline{functional \eqref{eq:4.3} with ${M=5000}$ and
$c_0=1.65$.}
\end{table}
\subsubsection{The Cox-Ingersoll-Ross (CIR) model}

Consider the following (CIR) short rate model
\begin{equation}\label{eq:4.5}
r(t)=r_{_0}+\int_0^t(\qhta-\alpha\,r(s))\;ds
+\int_0^t\sigma\sqrt{r(s)}\;dW(s),\ken t\ge0,
\end{equation}
where $\qhta$, $\alpha$ and $\sigma$ are real constants.
To connect the solution $r(t)$ of \eqref{eq:4.5} to the diagonal
value $f(t,t)$ of the solution of an HJM problem, consider, first,
the solution $B=B(t;\tau)$ of the following  Riccati differential
equation (see \cite{BR}):
\begin{equation*}
\begin{split}
\partial_t B(t;\tau)=&\,
\tfrac{1}{2}\,\sigma^2\,B^2(t;\tau)
+\alpha\,B(t;\tau)-1,\quad t\in[0,\tau],\quad\tau\ge0,\\
B(\tau;\tau)=&\,0,
\end{split}
\end{equation*}
which has the form $B(t;\tau)=\psi(\tau-t)$ where
\begin{equation*}
\psi(x)=-\tfrac{\alpha}{\sigma^2}+\tfrac{2}{\sigma^2}\,\,\,\ggo\,\,\,
\tfrac{\sinh(\ggo x)+\tfrac{\alpha}{2\ggo}\cosh(\ggo x)}
{\cosh(\ggo x)+\tfrac{\alpha}{2\ggo}\sinh(\ggo x)}
\quad\text{\rm and}\quad
{\widetilde\gamma}_{_0}:=\tfrac{1}{2}\,\sqrt{2\sigma^2+\alpha^2}.
\end{equation*}
Provided  $\ksi(x)=\sigma\sqrt{\max\{x,0\}}$ and
$\lambda_{_0}(x)=\psi'(x)$, then
${\widetilde\lambda}_{_0}(x)=\psi'(x)\psi(x)$
and the stochastic function
\begin{equation*}
f(t,\tau)=r(t)\,\psi'(\tau-t)+\qhta\,\psi(\tau-t)
\end{equation*}
solves \eqref{eq:1.2a}-\eqref{eq:1.2b} with the initial condition
$f_{_0}(\tau)=r_{_0}\,\psi'(\tau)+\qhta\,\psi(\tau)$.
Taking into account that $\psi'(0)=1$ and $\psi(0)=0$, it follows
that $f(t,t)=r(t)$.
\par
Setting $\tau_a=\tmax$,  $F(x)=e^{-x}$, $G(x)=\max
\left\{e^{-x}-K_{_0},0\right\}$, $\Psi(x)=x$ and  $U(x)=0$ in
(\ref{eq:1.3a}-\ref{eq:1.3b}), the functional to compute in this
example takes the form
\begin{equation}\label{eq:4.6}
\expected\left[\,{\mcF}(f)\,\right]
=\expected\left[\,\exp\left(-\int_0^{\tmax}f(s,s)\;ds\,\right)\,
\max\left\{\exp\left(
-\int_{\tmax}^{\taumax}f(\tmax,\tau)\;d\tau\right)-K_{_0},0\right\}
\right].
\end{equation}
In the numerical experiments we choose $r_{_0}=0.15$,
$\alpha=1.0$, $\sigma=0.1$, $\qhta=0.05$, $\tmax=5.0$,
$\taumax=8.0$ and $K_{_0}=0.5$. Table~\ref{tab:4.3} shows the
computational errors for the (EFD) and (EFE) methods and the ratio
between the approximation of the computational error and the exact
computational error for (EFD) method. There is no practical
difference in this case between the (EFD) and the (EFE) method
since the computational error is mainly $t$-discretization error
and the $\tau$-discretization error is relatively unimportant.
\par
In order to have smooth coefficients in the HJM model
(\ref{eq:1.2a}-\ref{eq:1.2b}) we approximate the function
$\sqrt{\max\{x,0\}}$ in the diffusion term by a Lipschitz function
globally defined in $\rset$ (cf. \cite{Du} p. 252),
\begin{equation*}
\sqrt{\max\{x,0\}}\approx\sqrt{\tfrac{1}{2}(x+\sqrt{x^2+\delta})}
\end{equation*}
where $\delta$ is a small positive constant. Observe that after
this regularization the value of the functional
$\expected\left[\,\mcF(f)\,\right]$ depends on  $\delta$. In the
computations $\delta$  has been taken small enough to make this
dependence negligible with respect to the size of the
computational error.
%
%
%
%
%
%
\begin{table}[ht] \label{tab:4.3}
                   \centering
\begin{tabular}{|c|c|c|c|}
\hline
\ \ \ \ ${{\text{\rm iseed}=-1}}$\ \ \ \ & \ \ \ \ (EFE) \ \ \ \ &  \multicolumn{2}{c|}{(EFD)} \\
\hline
 $N=L$  &  \ \ \ \ ${\mathcal E}_{\ssy C,E\!F\!E}$ \ \ \ \ & \ \ \ \ ${\mathcal E}_{\ssy C,E\!F\!D}$  \ \ \ \
 &  \ \ \ \ $[A-B,A+B]$ \ \ \ \  \\
\hline
 5   &  $1.23\times 10^{-2}$      &  $1.21\times 10^{-2}$      &  $[0.31,0.44]$  \\
\hline
10   &  $5.83\times 10^{-3}$      &  $5.39\times 10^{-3}$      &  $[0.91,0.95]$ \\
\hline
20   &  $2.76\times 10^{-3}$      &  $2.79\times 10^{-3}$      &  $[0.89,0.94]$  \\
\hline
\end{tabular}
\par\vskip0.2truecm\par
\centerline{Table~\ref{tab:4.3}. Comparing the (EFD) and (EFE)
methods in the (CIR) model approximating}
\centerline{functional \eqref{eq:4.6} with ${M=2000}$ and
$c_0=1.65$.}
\end{table}
In this example we compute an accurate numerical approximation of
the exact $\expected\left[\,\mcF(f)\,\right]$ from \eqref{eq:4.6}, via the
Feynman-Kac representation formula, using a numerical solution of
the following backward PDE (cf. \cite{Shr} p. 313),
\begin{equation*}
v_t+(\qhta -\alpha\,r)\,v_r + \tfrac{1}{2}\,\sigma^2\,r\,v_{rr}
-r\,v = 0, \quad t \in [0,\tmax], \quad r\in [0,r_{\max}],
\end{equation*}
with final datum $v(\tmax,r)=\bigl(
B(r,\tmax,\taumax)-K_{_0}\bigr)^+$, where $ B(r,\tmax,\taumax)$
denotes the (CIR) value for a bond with contracting time $\tmax$,
maturity time $\taumax$ and short rate at $\tmax$ equal to $r$. We
also use the boundary conditions
\begin{equation*}
v_t(t,0)+\alpha\,v_r(t,0)=0,\quad v(t,r_{\max})=0,
\end{equation*}
for $ t \in [0,\tmax] $. The value of
$r_{\max}>\!>\tfrac{\qhta}{\alpha}$ is taken sufficiently large so
that the homogeneous Dirichlet boundary at $r = r_{\max}$ has a
negligible effect on the numerical approximation for  $v(0,0.15)=
\expected\left[\,\mcF(f)\,\right]$. The spatial discretization is a
centered finite differences scheme and the time stepping is done
by a diagonally implicit Runge Kutta method, namely the DIRK2
method, see \cite{DV}.
Another way to estimate the exact solution with high accuracy is
to use a formula based on the $\chi^2$ distribution (see
\cite{Reb}, pp. 187-193 for details).
\subsubsection{A two-factor Gaussian model}
A two-factor model has randomness introduced by two scalar
independent Wiener processes $W_1$, $W_2$. In particular, for a
two-factor Gaussian model we have $\ksi(x)=1$,
$\lambda_{_{0,1}}(x)=\sigma_1$ and $\lambda_{_{0,2}}(x)=\sigma_2
e^{-\frac{a_2\,x}{2}}$, where $\sigma_1$, $\sigma_2$ and $a_2$ are
real positive constants. Thus  \eqref{eq:1.2a}-\eqref{eq:1.2b}
takes the form
\begin{equation}\label{eq:4.7}
\begin{split}
df(t,\tau)=&\,\left[(\sigma_1)^2\kenn(\tau-t)+
\tfrac{2(\sigma_2)^2\,e^{-\frac{a_2(\tau-t)}{2}}}{a_2}
\,\left(1-e^{-\frac{a_2(\tau-t)}{2}}\right)
\right]\;dt\\
&\quad\quad+\sigma_1\;dW_1(t)
+\sigma_2\,e^{-\frac{a_2(\tau-t)}{2}}\;dW_2(t),
\quad 0\leq{t}\leq{\tau},\\
f(0,\tau)=&\,f_{_0}(\tau)
\end{split}
\end{equation}
for $\tau\in[0,\taumax]$. Here  the initial condition is
$f_{_0}(\tau)= b_0 + b_1\,e^{-k\,\tau}$
where $b_0$, $b_1$ and $k$ are real constants.
Then, the exact solution of \eqref{eq:4.7} is normal distributed
as in the Ho-Lee and Vasicek models, so explicit formulas are
available for the pricing of put and call options with bonds as
underlyings.
\par
In the numerical experiment we take $\sigma_1=0.02$,
$\sigma_2=0.01$, $a_2 = 0.5$, and compute with the functional
defined in \eqref{eq:4.6} with strike $K_{_0}=0.5$, $\tmax =1$ and
$\taumax =3$. For the initial condition we set $b_0 = 0.0759$,
$b_1 =-0.0439$ and $k = 0.4454$. Table \ref{tab:4.4} shows the
computational errors for the (EFD) and (EFE) methods and the ratio
between the approximation of the computational error and the exact
computational error for method (EFD).
%
\begin{table}[ht]\label{tab:4.4}
                   \centering
\begin{tabular}{|c|c|c|c|}
\hline
\ \ \ \ ${{\text{\rm iseed}=-1}}$\ \ \ \ & \ \ \ \ (EFE) \ \ \ \ &  \multicolumn{2}{c|}{(EFD)} \\
\hline
 $N=L$  &  \ \ \ \ ${\mathcal E}_{\ssy C,E\!F\!E}$ \ \ \ \ & \ \ \ \ ${\mathcal E}_{\ssy C,E\!F\!D}$  \ \ \ \
 &  \ \ \ \ $[A-B,A+B]$ \ \ \ \  \\
\hline
 5   &  $-5.15\times 10^{-4}$
     &  $-6.90\times 10^{-4}$
     &  $[0.98,1.02]$  \\
\hline
10   &   $-2.78\times 10^{-4}$
     &   $-3.50\times 10^{-4}$
     &   $[0.96,1.05]$ \\
\hline
\end{tabular}
\par\vskip0.2truecm\par
\centerline{Table~\ref{tab:4.4}. Comparing the (EFD) and (EFE)
methods in the two-factor Gaussian model }
\centerline{approximating functional \eqref{eq:4.6} with
${M=40000}$ and $c_0=1.65$.}
\end{table}
\section*{Acknowledgements}
This work has been partially supported by:
The Swedish National Network in Applied Mathematics (NTM) `Numerical approximation of stochastic
differential equations' (NADA, KTH),
The EU-TMR project HCL \# ERBFMRXCT960033,
UdelaR and UdeM in Uruguay,
The Swedish Research Council for Engineering Science (TFR) Grant\#222-148,
The VR project `Effektiva numeriska metoder  f\"or stokastiska  differentialekvationer med  till\"ampningar'
(NADA, KTH),
the European Union's Seventh Framework Programme (FP7-REGPOT-2009-1) under grant agreement no. 245749
`Archimedes Center for Modeling, Analysis and Computation' (University of Crete, Greece),
The University of Crete,
and The King Abdullah University of Science and Technology (KAUST).
%
%
%
%
\bibliographystyle{plain}
\end{document}